\newtheoremstyle{thm}
  {9pt}{9pt}{\itshape}{}{\bfseries}{}{.5em}{}
\theoremstyle{thm}
\newtheorem{thm}{Theorem}[section]
\newtheorem{lemma}[thm]{Lemma}
\newtheorem{prop}[thm]{Proposition}
\newtheorem{conj}[thm]{Conjecture}
\newtheorem{exa}[thm]{Example}
\newtheoremstyle{defin}
  {9pt}{9pt}{}{}{\bfseries}{}{.5em}{}
\theoremstyle{defin}
\newtheoremstyle{exm}
  {9pt}{9pt}{}{}{\scshape}{}{.5em}{}
\theoremstyle{exm}
\newtheoremstyle{proof}
  {}{}{}{}{\itshape}{:}{.5em}{}
\theoremstyle{proof}
\newcommand{\set}[1]{\{#1\}}
\newcommand{\R}{{\mathbb R}}
\newcommand{\s}[1]{\mathbf{#1}}
\DeclareMathOperator{\vol}{vol}
\DeclareMathOperator{\cat}{Cat}
\DeclareMathOperator{\cone}{Cone}
\DeclareMathOperator{\ia}{ia}
\DeclareMathOperator{\ea}{ea}
\DeclareMathOperator{\Inv}{Inv}
\def\emp{\nothing}
\def\sq{\square}
\def\rr{\mathbb R}
\def\cC{\mathcal C}
\def\cG{\mathcal G}
\def\bP{\mathbf{P}}
\def\ssu{\subset}
\def\<{\langle}
\def\>{\rangle}
\def\rT{{\text {\rm T} } }
\def\0{{\mathbf 0}}
\def\nothing{\varnothing}
\def\.{\hskip.06cm}
\def\ts{\hskip.03cm}
\def\vol{{\text {\rm vol}}}
\def\pC{{{\text{\ts\bf{C}}}}}
\def\pG{{{\text{\ts\bf{G}}}}}
\def\pD{{{\text{\ts\bf{D}}}}}
\def\pT{{{\text{\ts\bf{T}}}}}
\def\pS{{{\text{\ts\bf{S}}}}}
\def\pO{{{\text{\ts\bf{O}}}}}
\def\rZ{ {\text {\rm Z} } }
\def\rT{ {\text {\rm T} } }
\def\IP{\Inv}
\title[Cayley and Tutte polytopes]{Triangulations of Cayley and Tutte polytopes}
\author[Matja\v z~Konvalinka]{ \ Matja\v z~Konvalinka$^\star$}
\author[Igor~Pak]{ \ Igor~Pak$^\dagger$}
\thanks{\thinspace ${\hspace{-.45ex}}^\star$Department of Mathematics, University of Ljubljana, 1000 Ljubljana, Slovenia.}
\thanks{\thinspace ${\hspace{-.45ex}}^\dagger$Department of Mathematics, UCLA, Los Angeles, CA 90095, USA}
\date{June 1, 2011}
\begin{document}

\begin{abstract}
\emph{Cayley polytopes} were defined recently 
as convex hulls of \emph{Cayley compositions} introduced by Cayley in 1857. 
In this paper we resolve \emph{Braun's conjecture},
which expresses the volume of Cayley polytopes in terms of the number of connected graphs.
We extend this result to two one-variable deformations of Cayley polytopes (which we call
\emph{$t$-Cayley} and \emph{$t$-Gayley polytopes}), and to the most general two-variable deformations, which
we call \emph{Tutte polytopes}.  The volume of the latter is given via an evaluation of the
\emph{Tutte polynomial} of the complete graph.

Our approach is based on an explicit triangulation of the Cayley and Tutte polytope.
We prove that simplices in the triangulations correspond to labeled trees. 
The heart of the proof is a direct bijection based on the
\emph{neighbors-first search} graph traversal algorithm.
\end{abstract}

\maketitle

\section{Introduction} \label{intro}

In the past several decades, there has been an explosion in the
number of connections and applications between Geometric and
Enumerative Combinatorics.  Among those, a number of new
families of ``combinatorial polytopes'' were discovered, whose
volume has a combinatorial significance.
Still, whenever a new family of $n$-dimensional polytopes is discovered
whose volume is a familiar integer sequence (up to scaling), it feels
like a ``minor miracle'', a familiar face in a crowd in a foreign country,
a natural phenomenon in need of an explanation.

\smallskip

In this paper we prove a surprising conjecture due to Ben Braun~\cite{BBL},
which expresses the volume of the Cayley polytope in terms of the number
of connected labeled graphs.  Our proof is robust enough to allow
generalizations in several directions, leading to the definition
of \emph{Tutte polytopes}, and largely explaining this latest
``minor miracle''.

\smallskip

We start with the following classical result.

\begin{thm}[Cayley, 1857] \label{t:cayley}
The number of integer sequences $(a_1,\ldots,a_n)$ such that
$1\le a_1 \le 2$, and $1\le a_{i+1} \le 2 \ts a_i$ for $1\le i <n$, is equal
to the total number of partitions of integers $N \in \{0,1,\ldots,2^{n}-1\}$
into parts $1,2,4,\ldots,2^{n-1}$.
\end{thm}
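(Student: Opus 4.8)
The plan is to build an explicit bijection between Cayley compositions $(a_1,\ldots,a_n)$ and binary partitions of the integers $0,1,\ldots,2^n-1$, by passing to \emph{slack variables} that linearize all the inequalities. Given a Cayley composition, set $s_0=2-a_1$ and $s_i=2a_i-a_{i+1}$ for $1\le i\le n-1$. The upper-bound constraints $a_1\le 2$ and $a_{i+1}\le 2a_i$ say precisely that $s_0,\ldots,s_{n-1}\ge 0$, and $(a_1,\ldots,a_n)\mapsto(s_0,\ldots,s_{n-1})$ is an invertible integer affine map $\Z^n\to\Z^n$; solving the resulting triangular system yields the closed form $a_k=2^k-\sum_{j=0}^{k-1}2^{\,k-1-j}s_j$.

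The next step is to pin down which $(s_0,\ldots,s_{n-1})\in\Z_{\ge 0}^n$ actually arise from Cayley compositions, i.e.\ to handle the remaining lower bounds $a_1,\ldots,a_n\ge 1$. The key point is that, once all $s_j\ge 0$, these $n$ constraints collapse to the single constraint $a_n\ge 1$: from $2a_k=a_{k+1}+s_k$ one sees that $a_{k+1}\ge 1$ forces $a_k\ge 1$, so a downward induction from $k=n$ gives $a_k\ge 1$ for every $k$. Substituting the closed form, the inequality $a_n\ge 1$ becomes $\sum_{j=0}^{n-1}2^{\,n-1-j}s_j\le 2^n-1$.

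Finally I would reindex by $r_i:=s_{\,n-1-i}$, turning this into $\sum_{i=0}^{n-1}r_i\,2^i\le 2^n-1$ with $r_0,\ldots,r_{n-1}\ge 0$. But that is exactly a binary partition of some $N\in\{0,1,\ldots,2^n-1\}$ into parts $1,2,4,\ldots,2^{n-1}$, with $r_i$ the multiplicity of the part $2^i$. Hence $(a_1,\ldots,a_n)\mapsto(r_0,\ldots,r_{n-1})$ is a bijection from the Cayley compositions onto the set of all such binary partitions, and comparing cardinalities proves the theorem. (One can sanity-check small cases: for $n=2$ the compositions $(1,1),(1,2),(2,1),(2,2),(2,3),(2,4)$ go to the six binary partitions of $0,1,2,3$.)

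There is no real obstacle here; the argument is short and the only step needing a moment's thought is the reduction of the $n$ positivity conditions $a_i\ge 1$ to the single condition $a_n\ge 1$ in the presence of nonnegative slacks. I would also remark that the reformulation is already geometric in flavor — both sides count lattice points of the dilated simplex $\{\,s\ge 0,\ \sum_j 2^{\,n-1-j}s_j\le 2^n-1\,\}$ — which anticipates the polytope perspective of the rest of the paper.
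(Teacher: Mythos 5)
Your bijection is correct, but note that the paper itself never proves Theorem~\ref{t:cayley}: it treats it as a classical result, citing Cayley's original generating-function argument \cite{Cay} and deferring a bijective proof to the companion note \cite{KP}, where (as remarked in Section~\ref{s:fin-3}) it is obtained from an explicit volume-preserving map carrying the integer points of $\pC_n$ onto a simplex whose lattice points encode the partitions. Your slack-variable argument is exactly in that spirit and is a complete, self-contained proof: the map $(a_1,\ldots,a_n)\mapsto(s_0,\ldots,s_{n-1})$ with $s_0=2-a_1$, $s_i=2a_i-a_{i+1}$ is unimodular, the upper bounds become $s_j\ge 0$, and the multiplicity vector $(r_0,\ldots,r_{n-1})$, $r_i=s_{n-1-i}$, ranges exactly over binary partitions of some $N\le 2^n-1$. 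Compared with the generating-function route, what your argument buys is precisely the geometric reformulation the rest of the paper builds on (lattice points of $\pC_n$ versus lattice points of the simplex $\{\ts s\ge 0,\ \sum_j 2^{\ts n-1-j}s_j\le 2^n-1\ts\}$), and it even gives a finer statement (an explicit bijection, not just equal cardinalities). The one step worth spelling out is the collapse of the $n$ lower bounds to $a_n\ge 1$: from $2a_k=a_{k+1}+s_k\ge 1$ you only get $a_k\ge \tfrac12$, so you must invoke integrality of $a_k$ to conclude $a_k\ge 1$; this is the single place where the argument counts lattice points rather than describing the real polytope, and it deserves an explicit sentence.
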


Although Cayley's original proof~\cite{Cay} uses only elementary generating
functions, it inspired a number of other proofs and variations~\cite{APRS,BBL,CLS,KP}.
It turns out that Cayley's theorem is best understood in a geometric setting,
as an enumerative problem for the number of integer points in an
$n$-dimensional polytope defined by the inequalities as in the theorem.

\smallskip

Formally, following~\cite{BBL}, define the \emph{Cayley polytope} $\pC_n\ssu \rr^n$ by inequalities:
$$
1 \. \leq \. x_1 \.\leq 2\ts , \ \ \text{and} \ \. 1 \. \leq \. x_i \. \leq \. 2 \ts x_{i-1} \,
\mbox{ for } \ i \ts = \ts 2,\ldots,n\ts,
$$
so that the number of integer points in $\pC_n$ is the number of integer
sequences $(a_1,\ldots,a_n)$, and the number of certain partitions, as in Cayley's theorem.

\smallskip

In~\cite{BBL}, Braun made the following interesting conjecture about the volume
of~$\pC_n$.  Denote by~$\cC_n$ the set of connected graphs on~$\ts n \ts $
nodes\footnote{To avoid ambiguity, throughout the paper, we distinguish
\emph{graph nodes} from \emph{polytope vertices}.},
and let $C_n = \bigl|\cC_n\bigr|$.

\begin{thm}[Formerly Braun's conjecture] \label{t:pol}
Let $\pC_n \ssu \rr^n$ \. be the Cayley polytope defined above.
Then \ts $\vol \ts \pC_n = C_{n+1}/n!$.
\end{thm}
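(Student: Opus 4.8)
The plan is to construct an explicit triangulation of $\pC_n$ whose maximal simplices are indexed by connected labeled graphs on $n+1$ nodes, each simplex contributing volume $1/n!$. The first step is to understand the combinatorial structure of $\pC_n$ itself: by homogenizing the defining inequalities (replacing the constants $1$ and $2$ by a slack variable, or equivalently working with the cone over $\pC_n$), I expect the vertices of $\pC_n$ to be indexed by some family of trees or sign patterns, since each inequality $x_i \le 2x_{i-1}$ or $x_i \ge 1$ becomes tight on a facet, and the vertices are determined by choosing, for each $i$, which of the two bounds is active — subject to feasibility. This chain structure suggests a natural guess: vertices correspond to binary strings, and the polytope is combinatorially a product-like or staircase object amenable to a pulling/pushing triangulation.

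Next, I would set up the bijection at the heart of the argument. The claim is that lattice simplices in a well-chosen triangulation $\leftrightarrow$ connected graphs on $n+1$ vertices. The natural bridge is the deletion–contraction / recursive structure shared by both sides: the number $C_{n+1}$ of connected graphs on $n+1$ nodes satisfies the exponential-formula recursion, and I would look for a matching recursive decomposition of the triangulation obtained by slicing $\pC_n$ along the hyperplane $x_n = 2x_{n-1}$ (or $x_n = 1$), which splits the last coordinate's range and should mirror "the last node is / is not a cut vertex" or an analogous graph-theoretic split. Concretely, I expect a graph traversal algorithm — the abstract mentions neighbors-first search — to read off, from a connected graph $G$ on $\{0,1,\dots,n\}$, a sequence of choices (at step $i$, whether the newly explored node attaches "tightly" or "loosely") that encodes one simplex, and conversely every simplex of the triangulation arises this way exactly once.

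The volume count then follows formally: if the triangulation has $C_{n+1}$ maximal simplices and each is unimodular (volume $1/n!$ in $\rr^n$), then $\vol \pC_n = C_{n+1}/n!$, matching Theorem~\ref{t:pol}. So the two technical obligations are (i) that the proposed simplices genuinely tile $\pC_n$ — they are interior-disjoint and their union is all of $\pC_n$ — and (ii) that each has the correct unimodular volume, which I would verify by computing the determinant of the vertex matrix of a single simplex and showing it is $\pm 1$ (the staircase form of the inequalities should make this a triangular determinant).

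The main obstacle will be proving that the candidate collection of simplices actually forms a triangulation, i.e. establishing the covering/disjointness properties. Counting simplices and checking one determinant is routine once the bijection is written down; the subtle point is that the neighbors-first search bijection must be shown to be well-defined and invertible, and that the geometric regions it names genuinely partition the polytope without gaps or overlaps. I would expect to handle this by induction on $n$ using the hyperplane slice described above, reducing the covering claim for $\pC_n$ to that for a lower-dimensional Cayley-type polytope, with the graph-traversal bijection respecting this recursion — so that the real content is checking the base case and the compatibility of the bijection with the inductive slicing.
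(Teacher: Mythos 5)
Your central structural claim --- that the triangulation should have one simplex per connected labeled graph on $n+1$ nodes, each of volume exactly $1/n!$ --- is not what happens, and would not work. Already for $n=2$ the polytope $\pC_2$ is a quadrilateral of area $2 = C_3/2!$, and the paper's triangulation uses only \emph{three} simplices (not four), of areas $1/2$, $1/2$ and $1$. In general the paper indexes the maximal simplices $\pS_T$ by \emph{labeled trees} $T$ on $n+1$ nodes (the output of the neighbors-first search, of which there are $(n+1)^{n-1}$, not $C_{n+1}$), and these simplices are emphatically \emph{not} unimodular: $\pS_T$ is a Schl\"afli orthoscheme with side lengths $2^{j_1},\dots,2^{j_n}$, so $n!\ts\vol\pS_T = 2^{\alpha(T)}$ where $\alpha(T)=\sum_i j_i$ counts the ``cane paths'' of $T$. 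The bijection you are reaching for then appears one level up: the connected graphs $G$ with $\Phi(G)=T$ are exactly those obtained from $T$ by adding an arbitrary subset of the cane edges of $T$, and there are $\alpha(T)$ of these, giving $|\{G:\Phi(G)=T\}| = 2^{\alpha(T)} = n!\ts\vol\pS_T$. Summing over all labeled trees $T$ then yields $n!\ts\vol\pC_n = C_{n+1}$.

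So the gap is this: your plan treats the NFS as labeling simplices by its \emph{input} (connected graphs) and asks each simplex to be unimodular; the actual argument labels simplices by its \emph{output} (search trees), and the nontrivial content is the coincidence of two quantities both equal to $2^{\alpha(T)}$ --- the orthoscheme volume on the geometric side, and the count of graphs with a given search tree on the combinatorial side. If you try to carry out your version you will immediately find that the natural ``staircase'' cells cut out by the inequality patterns do not have equal volume, and there is no unimodular refinement indexed by connected graphs in any canonical way. Your recursive slicing idea ($x_n = 2x_{n-1}$ vs.\ $x_n \ge 1$, induction on $n$) is in the right spirit for proving the covering/disjointness claims, and that part of the paper's argument does proceed by an inductive ``narrowing of inequalities,'' so that intuition survives; but the unit of the decomposition must be a tree, not a graph, and the volume computation must be the cane-path count, not a determinant equal to $\pm 1$.
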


This result is the first in a long chain of results we present
in this paper, leading to the following general result. Let $0 < q \leq 1$ and $t \geq 0$.
Define the \emph{Tutte polytope} $\pT_n(q,t)\ssu \rr^n$ by inequalities:
$x_n \geq 1-q$ and
$$ (\diamond) \hskip.8cm
q \ts x_i \, \leq \, q \ts (1 + t) \ts x_{i-1} \. - \. t\ts (1-q)(1-x_{j-1})\ts,
$$
where $1 \leq j \leq i \leq n$ and $x_0 = 1$.

\begin{thm}[Main result] \label{t:main}
Let $\pT_n(q,t) \ssu \rr^n$ \. be the Tutte polytope defined above.
Then
$$\vol \ts \pT_n(q,t) \, = \, t^n \rT_{K_{n+1}}(1+q/t,1+t)\ts / n!,
$$
where $\rT_H(x,y)$ denotes the \emph{Tutte polynomial} of graph~$H$.
\end{thm}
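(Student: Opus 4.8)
The plan is to construct an explicit triangulation of the Tutte polytope $\pT_n(q,t)$ into simplices indexed by labeled trees on $n+1$ nodes (or rather, labeled forests/trees with some extra decoration recording the two parameters $q$ and $t$), compute the volume of each simplex, and then recognize the resulting sum as the claimed Tutte-polynomial evaluation via the subgraph-expansion (corank--nullity) formula
$$
\rT_H(x,y) \, = \, \sum_{A \subseteq E(H)} (x-1)^{\mathrm{rk}(E)-\mathrm{rk}(A)}(y-1)^{|A|-\mathrm{rk}(A)}\ts.
$$
For $H = K_{n+1}$ and the substitution $(x,y) = (1+q/t,\ts 1+t)$ this reads $t^n\rT_{K_{n+1}}(1+q/t,1+t) = \sum_{A} q^{\,n - \mathrm{rk}(A)}\ts t^{\,|A|}$, a sum over all subsets $A$ of edges of $K_{n+1}$, weighted by the number of components (via $n - \mathrm{rk}(A)$) and the number of edges. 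So the target is to build a dissection of $\pT_n(q,t)$ whose pieces, after accounting for the $1/n!$ normalization, contribute exactly these monomials $q^{\,n-\mathrm{rk}(A)}t^{\,|A|}$, one naturally-weighted simplex per edge-subset $A$ of $K_{n+1}$.

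First I would analyze the defining inequalities $(\diamond)$: the family is indexed by pairs $1 \le j \le i \le n$, so there are $\binom{n+1}{2}$ of them (thinking of pairs among $\{0,1,\dots,n\}$ with $x_0=1$), matching the $\binom{n+1}{2}$ edges of $K_{n+1}$; together with $x_n \ge 1-q$ this should cut out a full-dimensional polytope whose facets are in bijection with the edges of $K_{n+1}$ plus one extra. The two special limits anchor the construction: at $t\to\infty$ after rescaling (or the appropriate normalization) one should recover the $t$-Cayley/$t$-Gayley polytopes, and at $q=1$, $t=1$ the inequalities collapse to the Cayley polytope $\pC_n$ of Theorem~\ref{t:pol}, whose volume $C_{n+1}/n!$ is exactly $\sum_{A\ \text{connected spanning}} 1 /n!$ — consistent with the subgraph expansion at $q=t=1$. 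I would next set up the \emph{neighbors-first search} (NFS) traversal of a graph on node set $\{0,1,\dots,n\}$ rooted at $0$: NFS on a connected graph produces a spanning tree together with an ordering of the nodes, and the key combinatorial lemma (the ``heart of the proof'' advertised in the abstract) will be a bijection between, on one side, NFS-trees-with-orderings — equivalently, chambers of a hyperplane arrangement refining the facet structure of $\pT_n$ — and, on the other side, the cells of the triangulation. I would prove that each inequality in $(\diamond)$, when turned into an equality, corresponds to ``edge $\{j-1,i\}$ is tight'', so that a full-dimensional simplex in the triangulation is pinned down by a spanning tree's worth of tight constraints, and the NFS order certifies that the chosen constraints are linearly independent and compatible (no empty cells).

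Then I would compute the volume of a single simplex. Because the tight constraints are affine-linear in the $x_i$'s with coefficients built from $q$, $t$, $1-q$, the simplex associated to a tree $T$ is the image of the standard simplex under a triangular (with respect to the NFS order) linear map, so its volume is $1/n!$ times the absolute value of the determinant of that triangular map, which factors as a product over the edges of $T$ of local Jacobian factors — each edge $\{j-1,i\}$ contributing a factor like $(1+t)$ or $t$ or involving $q$, depending on whether it is a ``tree edge used as $j=i$'' or a ``back/cross edge'' in the NFS sense. Summing $|\det|/n!$ over all trees, and then resumming over the extra decorations that encode which non-tree edges of $K_{n+1}$ are present, should telescope into $\frac1{n!}\sum_{A\subseteq E(K_{n+1})} q^{\,n-\mathrm{rk}(A)}t^{\,|A|}$. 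The main obstacle I anticipate is precisely this last bookkeeping: showing that the triangulation is \emph{exact} (the simplices tile $\pT_n$ with no overlaps and no gaps) and that the determinant-weights reorganize cleanly into the subgraph expansion — in other words, proving that NFS gives a \emph{bijection} rather than merely an injection or a surjection, and that the spurious configurations (subsets $A$ that are not the tight-edge set of any actual cell) are exactly the ones that the corank--nullity weighting already discounts. Establishing the NFS bijection carefully, including that it behaves correctly under the two degenerations $t\to\infty$ and $q\to1$, is where I expect essentially all the real work to lie; the volume computation and the Tutte-polynomial identification are, by comparison, routine once the combinatorics of the triangulation is nailed down.
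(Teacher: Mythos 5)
Your overall strategy coincides with the paper's at the top level: triangulate $\pT_n(q,t)$ into cells indexed by NFS trees/forests, compute each cell's volume as a triangular Jacobian times $1/n!$, and resum into the corank--nullity expansion, and your identification $t^n\rT_{K_{n+1}}(1+q/t,1+t)=\sum_A q^{\,n-\mathrm{rk}(A)}t^{|A|}=\rZ_{K_{n+1}}(q,t)$ is exactly the reduction the paper uses. But the core construction as you describe it would not work. You propose that a full-dimensional simplex of the triangulation is ``pinned down by a spanning tree's worth of tight constraints'' chosen among the defining inequalities $(\diamond)$. That cannot be literally so: by Theorem~\ref{verttutte} the polytope $\pT_n(q,t)$ has $2^n$ vertices (combinatorially a cube), so any triangulation must introduce interior walls that are not facet hyperplanes, and a cell bounded only by hyperplanes of $(\diamond)$ would be a face of $\pT_n(q,t)$, not a cell of a dissection. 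In the paper the simplices $\pS_F(q,t)$ are instead cut out by a chain of $n+1$ inequalities $0\le c(1,F;q,t)\le\cdots\le c(n+1,F;q,t)=qt$ among affinely rescaled coordinates attached to the nodes of a labeled forest $F$ via NFS and the cane-path statistic $\alpha(F)$; most of these comparisons (between coordinates of nodes in different subtrees, ordered by labels) are genuinely new hyperplanes, not instances of $(\diamond)$ made tight. Moreover, the proof that these chains tile the polytope passes through a coarser subdivision into $\cat(n+1)$ polytopes $\pD_F(q,t)$ built from orthoschemes by products and $q$-coning, and none of this — what the cells actually are and why they have no gaps or overlaps — is supplied or even specified in your plan.

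The second gap is the $q$-grading. You root NFS at a single node and speak of spanning trees of connected graphs, but the monomials with $\mathrm{rk}(A)<n$ come from disconnected subgraphs; in the paper these are handled by indexing cells by labeled \emph{forests}, giving each root the coordinate $t(x_l-1+q)$, and coning with apex $(1-q,\ldots,1-q)$ via $\cone_q$, which is precisely what makes the Jacobian of the map $B_F$ contribute the factor $q^{k(F)-1}$. You also omit the counting lemma that does the real bookkeeping: the graphs $G$ with $\Phi(G)=F$ are exactly those obtained from $F$ by adding an arbitrary subset of its $\alpha(F)$ cane edges, whence $\sum_{\Phi(G)=F}q^{k(G)-1}t^{|E(G)|}=q^{k(F)-1}t^{|E(F)|}(1+t)^{\alpha(F)}$, matching $n!\,\vol\pS_F(q,t)$ cell by cell; asserting that the determinants ``telescope'' into the subgraph expansion is not a substitute for this. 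Finally, your anchoring specializations are off: $q=1$, $t=1$ gives the Gayley polytope (all graphs, volume $2^{\binom{n+1}{2}}/n!$), not $\pC_n$; the Cayley polytope is the limit $q\to0+$ with $t=1$, and the $t$-Cayley and $t$-Gayley polytopes are $q\to0$ and $q=1$, not a $t\to\infty$ rescaling. These slips are peripheral, but they indicate that the degeneration structure that guides the actual construction is not yet in place.
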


One can show that in certain sense, Tutte polytopes are a
two variable deformation of the Cayley polytope:
$$
\lim_{q\to 0+} \. \pT_n(q,1) \, = \, \pC_n\ts.
$$
To see this, note that for $t = 1$, the inequalities with $j=1$ in $(\diamond)$ give $x_i \leq 2x_{i-1}$, and for $j > 1$, we get $x_{j-1} \geq 1$ as $q \to 0+$.

\smallskip

Now, recall that $\rT_H(1,2)$ is the number of connected subgraphs
of~$H$, a standard property of Tutte polynomials (see e.g.~\cite{Bol}).
Letting $q\to 0+$ and $t=1$ shows that Theorem~\ref{t:main} follows
immediately from Theorem~\ref{t:pol}.  In other words, our main theorem
is an advanced generalization of Braun's Conjecture (now Theorem~\ref{t:pol}).

\smallskip

The proof of both Theorem~\ref{t:pol} and~\ref{t:main} is based
on explicit triangulations of polytopes.  The simplices in the
triangulations have a combinatorial nature, and are in bijection
with labeled trees (for the Cayley polytope) and forests (for the
Tutte polytope) on $n+1$ nodes.  This bijection is based
on a variant of the \emph{neighbors-first search} (NFS)
graph traversal algorithm studied by Gessel and Sagan~\cite{GS}.
Roughly speaking, in the case of Cayley polytopes, the volume of
a simplex in bijection with a labeled tree~$T$ corresponds to the set
of labeled graphs for which~$T$ is the output of the~NFS.

\smallskip

To be more precise, our most general construction gives two subdivisions of the Tutte polytope,
a triangulation (subdivision into simplices) and a coarser subdivision that
can be obtained from simplices with products and coning. Some (but not all) of the simplices
involved are \emph{Schl\"{a}fli orthoschemes} (see below).
The polytopes in the coarser subdivision are in bijection with plane forests,
so there are far fewer of them. In both subdivisions, the volume
of the simplex or the polytope in bijection with a forest~$F$ on $n+1$ nodes,
times~$n!$, is equal to the generating function of all the graphs $G$ that
map into it by the number of connected components (factor $q^{k(G)-1}$)
and the number of edges (factor $t^{|E(G)|}$).

\smallskip

Rather than elaborate on the inner working of the proof, we illustrate
the idea in the following example.

\begin{exa} {\small \rm
The triangulation of $\rT_2(q,t)$ is shown on the
left-hand side of Figure~\ref{fig16}. For example, the top triangle is labeled by
the tree with edges $12$ and $13$; its area, multiplied by $2!$, is $t^2(1+t)$,
and it also has two graphs that map into it, the tree itself (with two edges)
and the complete graph on $3$ nodes (with three edges). The coarser subdivision
is shown on the right-hand side of Figure~\ref{fig16}.
The bottom rectangle corresponds to the plane forest with two components,
the first having two nodes. Its area, multiplied by $2!$, is $2qt$,
and there are indeed two graphs that map into it, both with two components
(and hence a factor of $q$) and one edge (and hence a factor of $t$).
Triangulation of $\rT_3(q,t)$ is shown in Figure~\ref{fig18}.

\begin{figure}[ht!]
 \begin{center}
   \includegraphics[height=7.8cm]{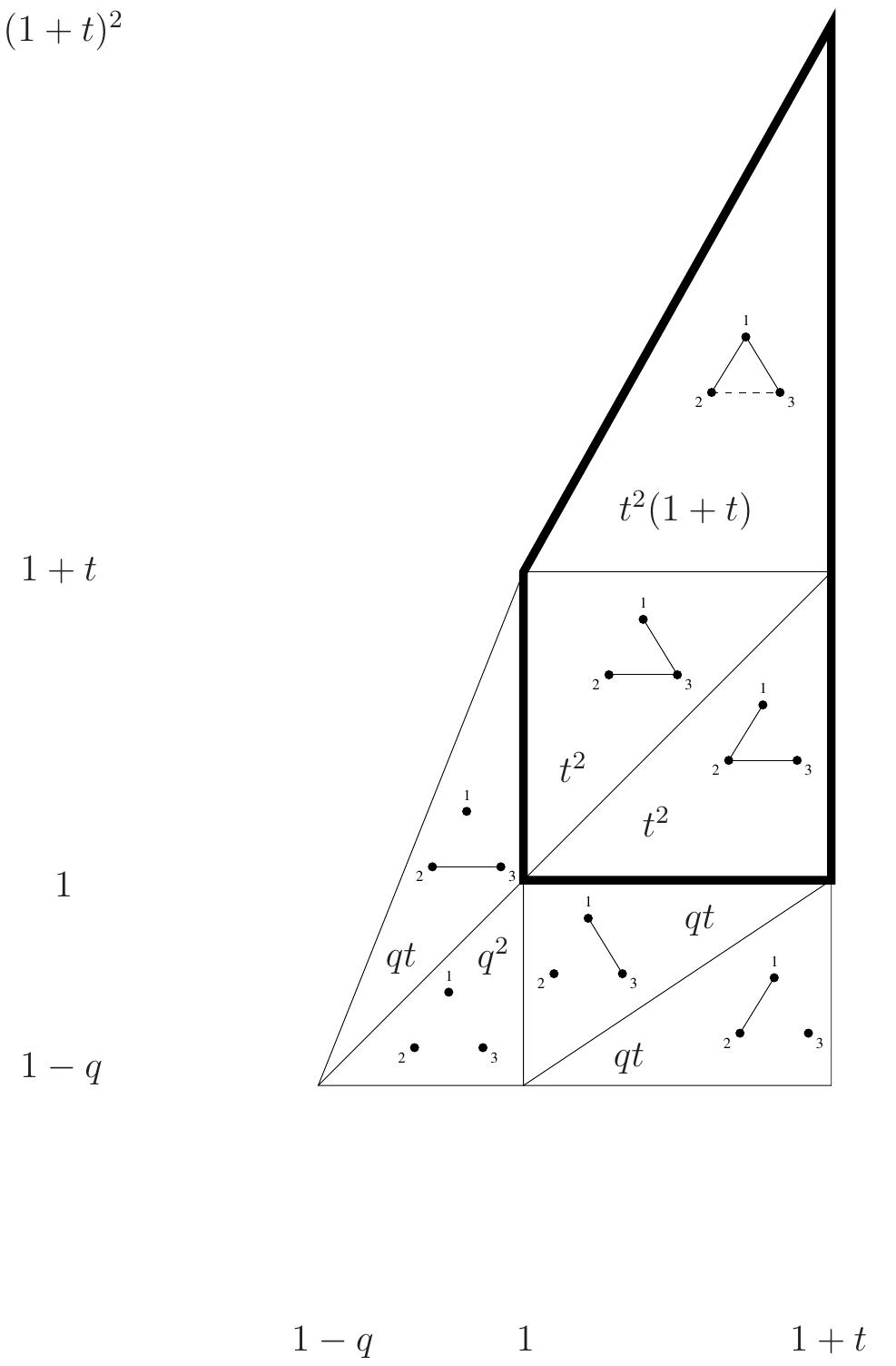}
   \qquad \qquad \qquad \qquad
   \includegraphics[height=7.8cm]{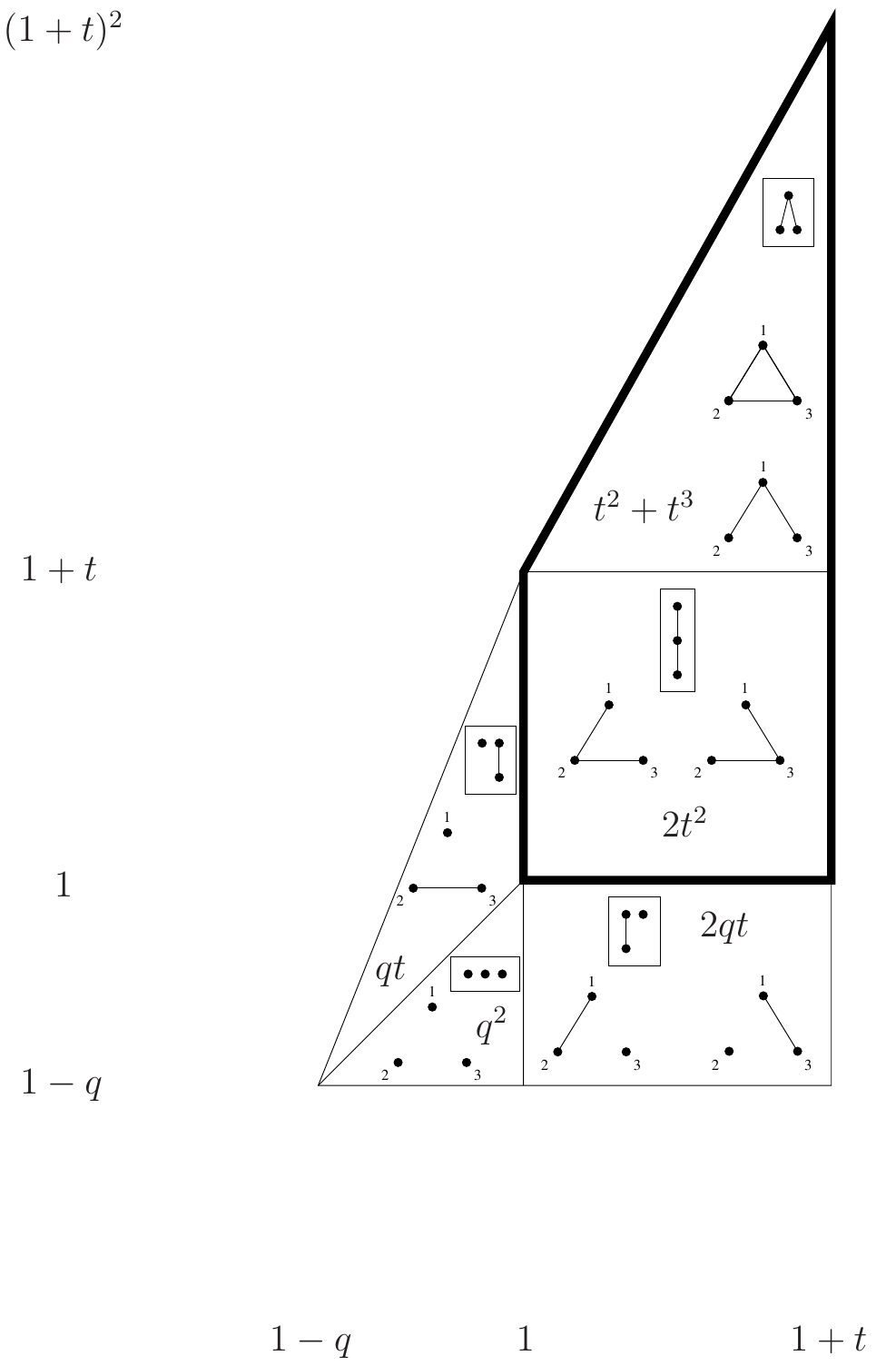}
   \caption{A triangulation and a subdivision of the Tutte polytope $\s T_2(q,t)$.}
   \label{fig16}
 \end{center}
\end{figure}

\begin{figure}[ht!]
 \begin{center}
   \includegraphics[height=5cm]{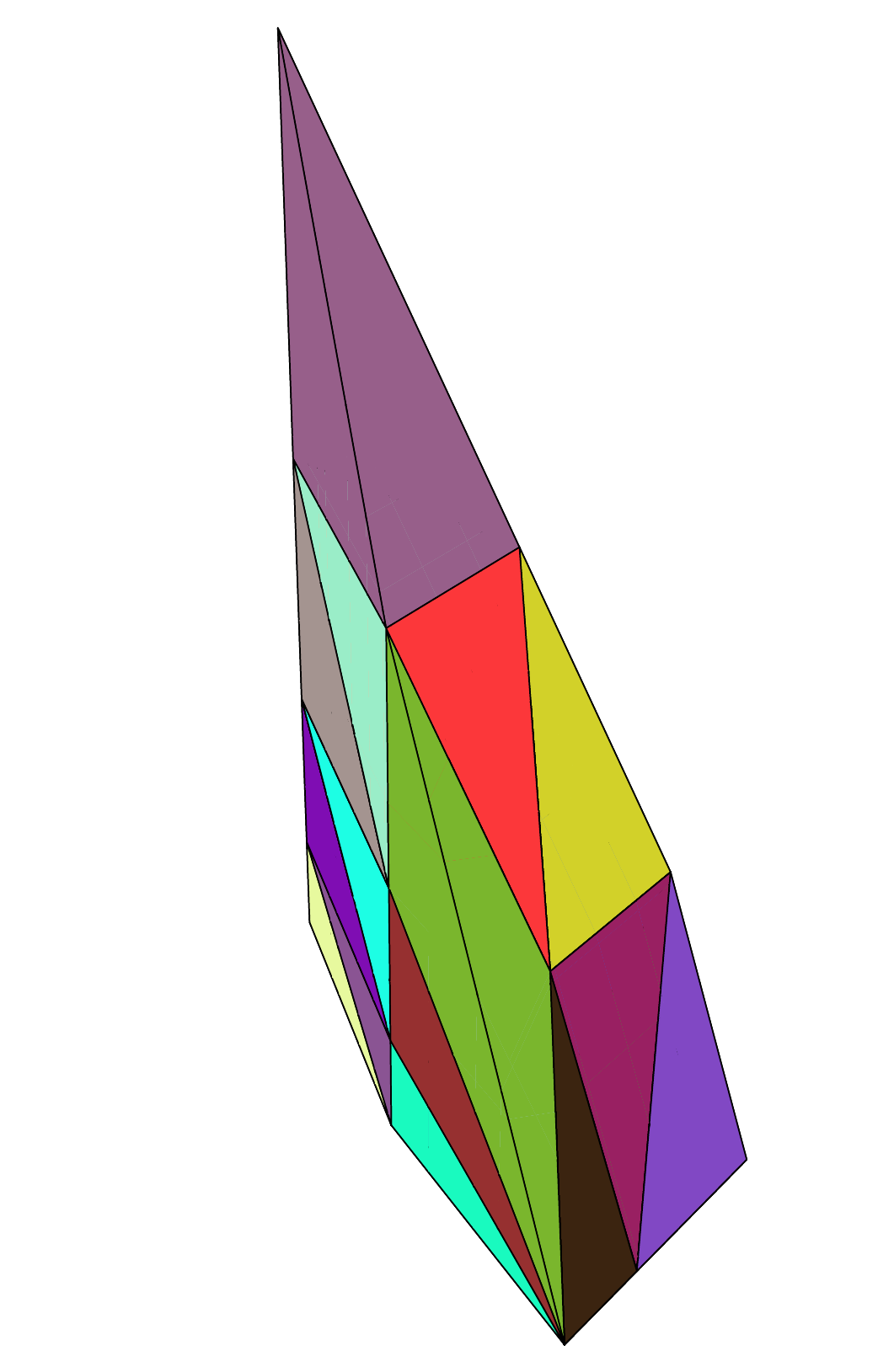}
   \qquad \qquad
   \includegraphics[height=5cm]{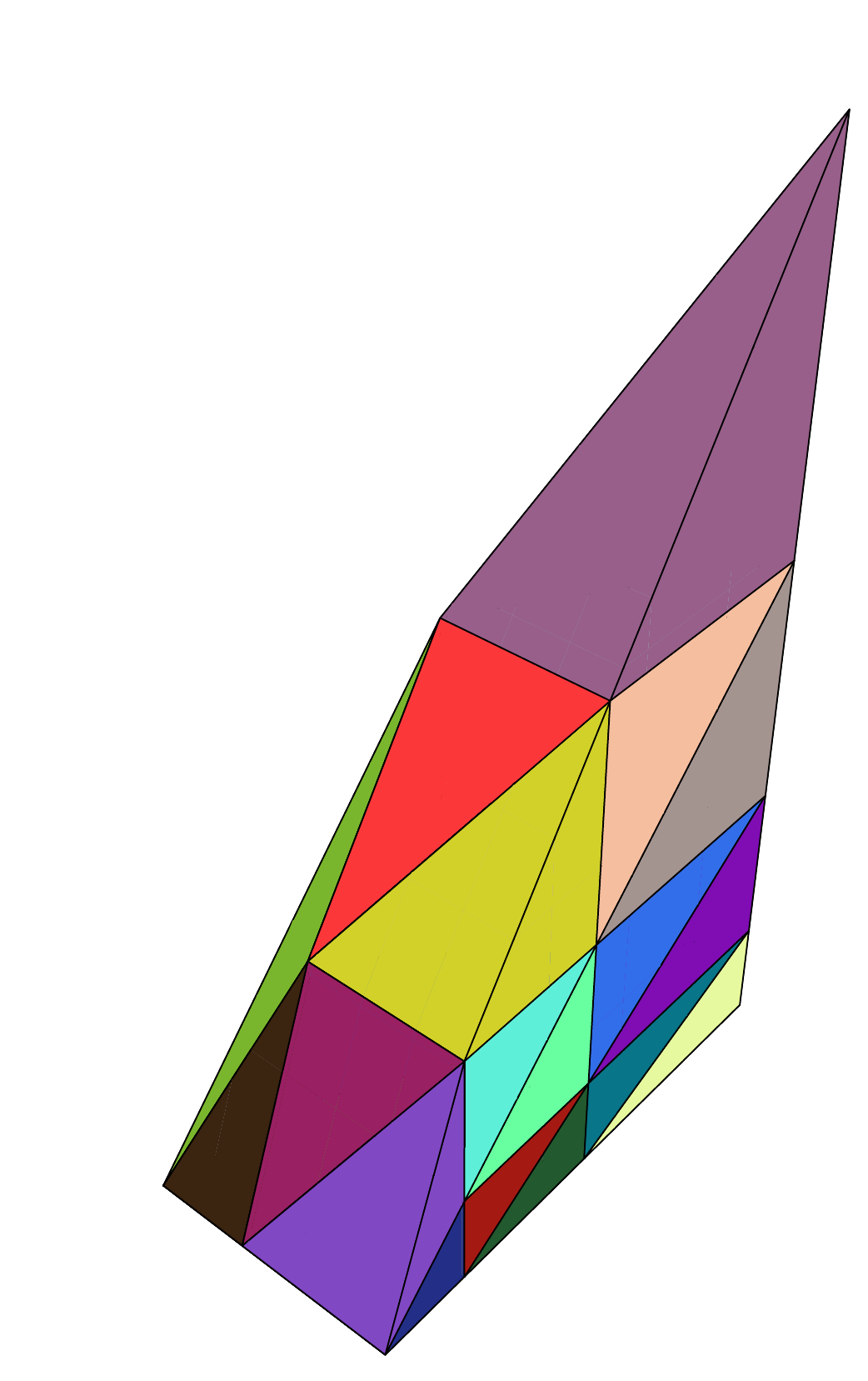}
   \caption{A triangulation of the Tutte polytope $\s T_3(q,t)$ from two angles.}
   \label{fig18}
 \end{center}
\end{figure}

}

\end{exa}

The rest of the paper is structured as follows.  We begin
with definitions and basic combinatorial results in Section~\ref{s:comb}.
In Sections~\ref{triang} and~\ref{another} we construct a triangulation
and a coarse subdivision of the Cayley polytope.  In Section~\ref{gayley} we
present a similar construction for what we call the \emph{Gayley polytope},
which can be defined as a special case of the Tutte polytope $\s T_n(1,1)$.
Two one parametric families of deformations of Cayley and Gayley polytopes
are then considered in Section~\ref{t}; we call these \emph{$t$-Cayley}
and \emph{$t$-Gayley polytopes}.  \emph{Tutte polytopes} are then defined
and analyzed in Section~\ref{s:tutte}. The vertices of the polytopes are
studied in Sections~\ref{vertices}. An ad hoc application of the volume of $t$-Cayley
polytopes to the study of \emph{inversion polynomials} is given in
Section~\ref{s:app}.  We illustrate all constructions with examples in Section \ref{examples}.
 The proofs of technical results in
Sections~\ref{triang}$-$\ref{vertices} appear in the lengthy
Section~\ref{proofs}.  We conclude with final remarks and open
problems in Section~\ref{s:fin}.

\medskip

\section{Combinatorial and geometric preliminaries}\label{s:comb}

\subsection{}\label{ss:comb-1} \.
A \emph{labeled tree} is a connected acyclic graph. We take each labeled tree to be rooted
at the node with the maximal label. A \emph{labeled forest} is an acyclic graph. Its components
are labeled trees, and we root each of them at the node with the maximal label. \emph{Cayley's formula}
states that there are $n^{n-2}$ labeled trees on $n$ nodes.
An \emph{unlabeled plane forest} is a graph without cycles in which
we do not distinguish the nodes, but we choose a root in each component,
which is an \emph{unlabeled plane tree}, and the subtrees at any node,
as well as the components of the graph, are linearly ordered (from left to right).
The number of plane forests on $n$ nodes is the $n$-th Catalan number
$\cat(n) = \frac 1{n+1} \binom{2n}n$, and the number of plane tree on $n$ nodes
is $\cat(n-1)$. The \emph{degree} of a node in a
plane forest is the number of its successors, which is the usual (graph) degree
if the node is a root, and one less otherwise. The \emph{depth-first traversal}
goes through the forest from the left-most tree to the right; within each tree,
it starts at the root, and if nodes $v$ and $v'$ have the same parent and $v$
is to the left of $v'$, it visits $v$ and its successors before $v'$.

\smallskip

The \emph{degree sequence} of a tree $T$ on $n$ nodes is the sequence
$(d_1,\ldots,d_n)$ where $d_i$ is the degree of the $i$-th node in depth-first traversal.
Since the last node is a leaf, the degree sequence always ends with a zero.
The degree sequence determines the plane tree uniquely, and we have
$\sum_{i=1}^n d_i = n - 1$. The degree sequence of a forest $F$ is the
concatenation of the degree sequences of its components, and it determines
the plane forest uniquely. Finally, if we erase zeros marking the ends of components,
we get a \emph{reduced degree sequence}.
We refer to \cite[\S~5.3 and Exc.~6.19e]{Stanley} for further details.

\subsection{}\label{ss:comb-2} \.
For a (multi)graph $G$ on the set of nodes $V$, denote by $k(G)$ the number
of connected components of $G$, and by $e(G)$ the number of edges of $G$.
Consider  a polynomial
$$
\rZ_G(q,t) \. = \. \sum_{H \subseteq G} q^{k(H)-k(G)} t^{e(H)},$$
where the sum is over all spanning subgraphs $H$ of $G$.
This polynomial is a statistical sum in the \emph{random cluster model}
in statistical mechanics. It  is related to the \emph{Tutte polynomial}
$$
\rT_G(x,y) \. = \. \sum_{H \subseteq G} (x-1)^{k(H)-k(G)} (y-1)^{e(H)-|V|+k(H)}
$$
by the equation
$$
\rT_G(x,y) \. = \. (y-1)^{k(G)-|V|} \. \rZ_G((x-1)(y-1),y-1)\ts.
$$
Tutte's classical result is a combinatorial interpretation for coefficients of
the Tutte polynomial~\cite{Tutte}.  He showed that for a connected graph~$G$ we have:
$$
(\lozenge) \ \ \ \rT_G(x,y) \. = \. \sum_{T\in G} \ts x^{\ia(T)}\ts y^{\ea(T)}\.,
$$
where the summation is over all spanning trees~$T$ in~$G$; here $\ia(T)$ and $\ea(T)$
denote the number of \emph{internally active} and \emph{externally active}
edges in~$T$, respectively.  While both $\ia(T)$ and $\ea(T)$  depend on
the ordering of the edges in~$G$, the sum $(\lozenge)$ does not
(see \cite[\S X.5]{Bol} for definitions and details).

\smallskip

For the complete graph~$K_n$, the Tutte polynomial and its evaluations are
well studied (see~\cite{Tutte,Ges2}).  In this case, under a lexicographic
ordering of edges, the statistics $\ia(T)$ and $\ea(T)$ can be interpreted
combinatorially~\cite{Ges2,GS} via the \emph{neighbor-first search} (NFS) introduced
in~\cite{GS}, a variant of which is also crucial for our purposes. Take a labeled connected graph $G$ on $n+1$ nodes.
Choose the node with the maximal label, i.e.\ $n+1$, as the first active node
(and also the $0$-th visited node). At each step, visit the previously
unvisited neighbors of the active node in decreasing order of their labels,
and make the one with the smallest label the new active node.\footnote{Note that
in~\cite{GS}, the NFS starts at the node with the minimal label, and the neighbors of
the active node are visited in \emph{increasing} order of their labels.}
If all the neighbors of the active node have been visited,
backtrack to the last visited node that has not been an active node,
and make it the new active node. The resulting search tree $T$ is a
labeled tree on $n+1$ nodes, we denote it $\Phi(G)$ (see Example~\ref{exm1}).

\smallskip

In a special case, the polynomial $\IP_n(y)=\rT_{K_n}(1,y) \ts y^{1-n}$ is the 
classical \emph{inversion polynomial}~\cite{MR} (see also~\cite{Ges1,GW,GJ2}), 
a generating function for the number of spanning trees with respect to
inversions.

\subsection{}\label{ss:comb-tri} \. Let $\bP \ssu \rr^n$ be a convex polytope.
A \emph{triangulation} of~$\bP$ is a \emph{dissection} of~$\bP$ into $n$-simplices.
Throughout the paper, all triangulations are in fact \emph{polytopal subdivisions};
we do not emphasize this as this follows from their explicit construction.  We refer
to~\cite{DRS} for a comprehensive study of triangulations of convex polytopes.

Denote by $\pO(\ell_1,\dots,\ell_n) \ssu \rr^n$ a simplex defined as convex hull
of vertices
$$
(0,0,0,\ldots,0), \ (\ell_1,0,0,\ldots,0), \ (\ell_1,\ell_2,0,\ldots,0), \, \ldots \,,
\ (\ell_1,\ell_2,\ell_3\ldots,\ell_n)\ts.
$$
Such simplices, and the polytopes we get if we permute and/or translate the coordinates, are called \emph{Schl\"{a}fli orthoschemes}, or \emph{path-simplices}
(see Subsection~\ref{s:fin-2}).
Obviously, $\vol \pO(\ell_1,\ldots,\ell_n)= \ell_1\cdots \ell_n/n!$.

\medskip

\section{A triangulation of the Cayley polytope} \label{triang}

Attach a \emph{coordinate} of the form $x_i/2^{j}$ to each node of the tree $T$ rooted at the node with label $n+1$, where $i$ is the position of the node in the NFS, and $j$ is a non-negative integer defined as follows. Attach $x_0$ to the root; and if the node $v$ has coordinate $x_i/2^j$ and successors $v_1,\ldots,v_k$ (in increasing order of their labels), then make the coordinates of $v_k,\ldots,v_1$ to be $x_{i'}/2^j,x_{i'+1}/2^{j+1},\ldots,x_{i'+k-1}/2^{j+k-1}$. See Figure~\ref{fig7} for an example.

\smallskip

Define $\alpha(T) = \sum_i j_i$. For the next lemma, which gives another characterization of $\alpha(T)$, note first that in a rooted labeled tree (as well as in a plane tree), we have the natural concept of an \emph{up} (respectively, \emph{down}) step, i.e.\ a step from a node to its parent (respectively, from a node to its child), as well as a \emph{down right} step, i.e.\ a down step $v \to v''$ that follows an up step $v' \to v$ so that $v''$ has a larger label than (or is the the right of) $v'$. Call a path of length $k \geq 2$ in a rooted labeled tree (or a plane tree) a \emph{cane path} if the first $k-1$ steps are up and the last one is down right (see Figure~\ref{fig14}).

 \begin{figure}[ht!]
 \begin{center}
   \includegraphics[height=2.6cm]{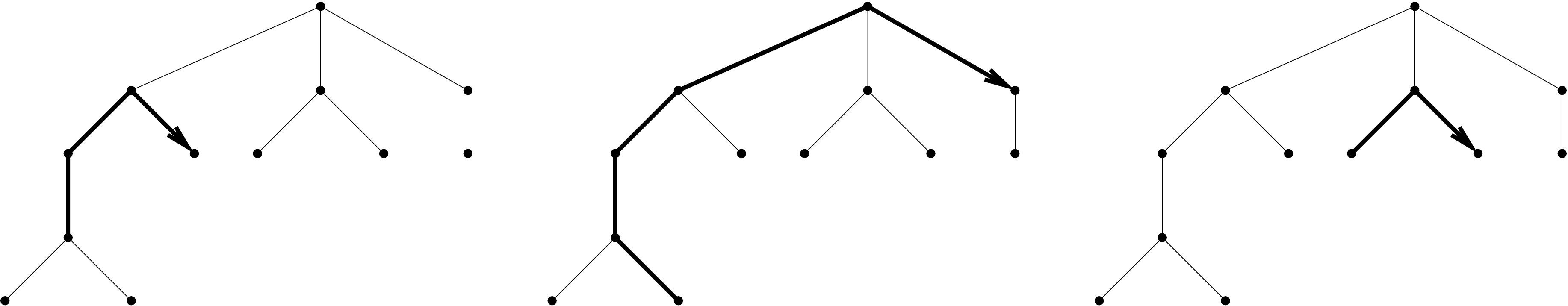}
   \caption{Cane paths in a tree.}
   \label{fig14}
 \end{center}
\end{figure}

\begin{lemma} \label{alpha}
 For a node $v$ with coordinate $x_i/2^{j}$, $j$ is the number of cane paths in $T$ that start in $v$. In particular, $\alpha(T)$ is the number of cane paths in $T$.
\end{lemma}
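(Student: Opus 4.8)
The plan is to prove the first (node-level) statement by induction on the depth of $v$ in $T$, after which the statement about $\alpha(T)$ follows by summing over all nodes. First I would set up the induction by examining how the coordinate exponent $j$ propagates from a node to its children under the rule in the construction: if $v$ carries $x_i/2^j$ and has successors $v_1,\ldots,v_k$ listed in increasing label order, then $v_k$ inherits exponent $j$, while $v_{k-1},\ldots,v_1$ receive exponents $j+1,\ldots,j+k-1$ respectively (reading from the construction, the successors get, from right to left, the exponents $j,j+1,\ldots,j+k-1$). So the exponent of a child is $j$ plus the number of its right-siblings (siblings with larger label / to its right). The key observation is then that a cane path starting at $v$ consists of a sequence of up-steps to some ancestor $w$ of $v$, followed by one down-right step into a right-sibling of the child of $w$ lying on the $v$-to-$w$ path. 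Equivalently: cane paths starting at $v$ are in bijection with pairs (an ancestor $w$ of $v$, a right-sibling of the $w$-child through which the path from $v$ passes), where we also allow $w$ to be the parent of $v$ itself and the right-sibling to be a right-sibling of $v$.

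The induction step is then a bookkeeping identity. Let $j(v)$ denote the exponent of $v$ and $c(v)$ the number of cane paths starting at $v$. For the root, $j=0$ and there are no cane paths starting there (a cane path needs at least one up-step), so the base case holds. For a non-root node $v$ with parent $u$, the cane paths starting at $v$ split into two classes: those whose single down-right step lands on a right-sibling of $v$ — there are exactly $r(v)$ of these, where $r(v)$ is the number of right-siblings of $v$ — and those that continue past $u$, i.e.\ take at least two up-steps; the latter are in bijection with cane paths starting at $u$ (prepend the up-step $v\to u$). Hence $c(v) = r(v) + c(u)$. On the coordinate side, the propagation rule gives exactly $j(v) = r(v) + j(u)$. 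By the inductive hypothesis $c(u)=j(u)$, so $c(v)=j(v)$, completing the induction. Summing $j(v)=c(v)$ over all nodes $v$ of $T$ gives $\alpha(T)=\sum_v j(v) = \sum_v c(v) = $ total number of cane paths in $T$ (each cane path is counted once, at its starting node).

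The only subtle point — and the step I expect to be the main obstacle — is pinning down precisely what "down right" and "right-sibling" mean for a labeled (as opposed to plane) tree, and checking that the bijection between cane paths starting at $v$ and the pairs described above is exact, with no double-counting or omission at the "turn" node $w$. Concretely, I must verify that the down-right condition "$v''$ has a larger label than (or is to the right of) $v'$" on the step $v'\to v\to v''$ matches exactly the relation "$v''$ is a right-sibling of $v'$ among the children of $v$", and that this is consistent with the left-to-right order used in the coordinate-assignment rule (successors listed in increasing label order, exponents assigned from right to left). Once the combinatorial dictionary cane-path $\leftrightarrow$ (ancestor, right-sibling) is nailed down, the recursion $c(v)=r(v)+c(u)$ and its coordinate twin $j(v)=r(v)+j(u)$ are immediate and the lemma follows. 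I would also double-check the edge case $k=1$ (a node with a single child, which inherits the same exponent and acquires no new cane path) and the leaves (exponent and cane-path count can still be positive, coming from right-siblings and ancestors), to be sure the statement is literally correct as phrased.
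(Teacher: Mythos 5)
Your proof is correct and follows essentially the same route as the paper: induction on depth, with the key recursion $c(v) = r(v) + c(u)$ (cane paths from $v$ split into those turning at the parent onto a right-sibling of $v$, and those continuing past the parent, in bijection with cane paths from $u$), matched against the coordinate rule $j(v) = r(v) + j(u)$. The paper phrases this concretely in terms of the children $v_k,\ldots,v_1$ and counts $j + k - l$ cane paths starting at $v_l$, but the decomposition and inductive step are identical to yours.
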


Arrange the coordinates of the nodes $1,\ldots,n$ according to the labels. More precisely, define
$$\pS_T \. = \.
\set{(x_1,\ldots,x_n) \colon 1 \leq x_{i_1}/2^{j_1} \leq x_{i_2}/2^{j_2} \leq \ldots \leq x_{i_n}/2^{j_n} \leq 2}\ts,
$$
where the coordinate of the node with label $k$ is $x_{i_k}/2^{j_k}$. 
Note that $\pS_T$ is a Schl\"{a}fli orthoscheme with parameters $2^{j_1},\ldots,2^{j_n}$ (see Example~\ref{exm2}).

\begin{thm} \label{thm2}
 For every labeled tree $T$ on $n+1$ nodes, the set $\pS_T$ is a simplex, and
 $$
 n! \ts \vol \pS_T \. = \. |\set{G \in \cC_{n+1}, \, \, \mbox{\textup{s.t.}} \ \Phi(G) = T}| \.
 = \. 2^{\alpha(T)}.
 $$
 Furthermore, simplices $\pS_T$ triangulate the Cayley polytope $\pC_n$.
 In particular,
 $$
 n! \ts \vol \pC_n \. = \. |\cC_{n+1}|\ts.
 $$
\end{thm}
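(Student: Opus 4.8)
\textbf{Proof plan for Theorem~\ref{thm2}.}

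The plan is to establish the three assertions in the order: (1) $\pS_T$ is a Schl\"{a}fli orthoscheme with the stated parameters, hence a simplex of volume $2^{\alpha(T)}/n!$; (2) the simplices $\{\pS_T\}$ tile $\pC_n$ as $T$ ranges over labeled trees on $n+1$ nodes; (3) the fibers of $\Phi$ over $T$ have size $2^{\alpha(T)}$, and summing gives $n!\vol\pC_n = \sum_T 2^{\alpha(T)} = |\cC_{n+1}|$. First I would treat (1): the defining chain of inequalities $1\le x_{i_1}/2^{j_1}\le\cdots\le x_{i_n}/2^{j_n}\le 2$ is, after the monotone linear change of variables $y_k = x_{i_k}/2^{j_k}$, exactly the standard order simplex $1\le y_{\pi(1)}\le\cdots\le y_{\pi(n)}\le 2$ for the permutation $\pi$ recording which node has which NFS-position. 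Undoing the substitution scales the $k$-th coordinate by $2^{j_k}$, so by the formula $\vol\pO(\ell_1,\dots,\ell_n)=\ell_1\cdots\ell_n/n!$ from Subsection~\ref{ss:comb-tri} (plus a coordinate permutation, which is volume-preserving) we get $\vol\pS_T = 2^{j_1+\cdots+j_n}/n! = 2^{\alpha(T)}/n!$ using Lemma~\ref{alpha}. One must check the change of variables is well-defined, i.e.\ that each label $1,\dots,n$ receives a distinct node and hence a distinct coordinate $x_{i_k}$; this is immediate from the NFS visiting each node exactly once.

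Next I would prove the bijection statement (3), which I expect to be the heart of the argument. Fix a labeled connected graph $G\in\cC_{n+1}$ and run the NFS to get $T=\Phi(G)$; the point is to show that the "extra" edges of $G$ not in $T$ correspond precisely to the binary choices measured by $\alpha(T)$. The structure of the NFS forces any non-tree edge $\{u,w\}$ of $G$ (say $u$ visited before $w$) to be a "back edge" of a specific kind: $w$ must already have been seen (placed on the stack) at the time $u$ became active, which — by the decreasing-label visiting order and the backtracking rule — pins down exactly which pairs $(u,w)$ are \emph{eligible} to be non-tree edges, and these eligible pairs are in bijection with cane paths starting at the various nodes. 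Thus $G$ is recovered from $T$ by freely choosing, for each of the $\alpha(T)$ eligible slots, whether to include that edge; this gives $|\Phi^{-1}(T)\cap\cC_{n+1}| = 2^{\alpha(T)}$. I would make this precise by induction on $n$, peeling off the first edge explored from the root and using the recursive description of the coordinate labeling (the successors $v_k,\dots,v_1$ of a node inherit exponents $j,j+1,\dots,j+k-1$), which matches the recursion for counting eligible non-tree edges.

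Finally, for the tiling statement (2), I would argue that $\pC_n = \bigsqcup_T \pS_T$ by a measure/covering argument combined with interior-disjointness. For interior-disjointness: two distinct trees $T\ne T'$ induce different orderings $\pi\ne\pi'$ of the scaled coordinates (or different scalings), and a generic interior point of $\pC_n$ — one with all the ratios $x_i/2^j$ distinct and strictly between $1$ and $2$ — determines a unique such ordering, hence lies in the interior of at most one $\pS_T$; one must verify that the map from generic points to trees is exactly $G\mapsto\Phi(G)$ read off the "fractional part" data, i.e.\ that the combinatorial type of a point coincides with the NFS-tree of the associated graph. For covering: every point of $\pC_n$ satisfies $1\le x_i\le 2x_{i-1}$, and I would show by the same recursive NFS-reading that its sorted scaled-coordinate data is realized by some labeled tree, so the point lies in some $\pS_T$. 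Then $\vol\pC_n = \sum_T\vol\pS_T = \sum_T 2^{\alpha(T)}/n!$, and combining with (3), $n!\vol\pC_n = \sum_T |\Phi^{-1}(T)\cap\cC_{n+1}| = |\cC_{n+1}|$ since $\Phi$ is a total function $\cC_{n+1}\to\{\text{labeled trees on }n+1\text{ nodes}\}$. The main obstacle, as noted, is the clean combinatorial identification in (3) of non-tree edges with cane paths; everything else is bookkeeping around the Schl\"{a}fli orthoscheme volume formula and a standard generic-point dissection argument.
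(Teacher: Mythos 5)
Your plan is correct in outline and its two central ingredients coincide with the paper's: the volume of $\pS_T$ is computed by the affine change of variables $y_k=x_{i_k}/2^{j_k}$ (the paper uses the map $x_i\mapsto 2^{j_i}(x_i+1)$ onto a standard order simplex, same determinant $2^{\alpha(T)}$), and the fiber count rests on exactly the lemma the paper proves, namely that $\Phi(G)=T$ if and only if $E(G)=E(T)\cup C$ with $C$ a set of ``cane edges'', which are in bijection with cane paths and hence counted by $\alpha(T)$ via Lemma~\ref{alpha}. Where you diverge is the tiling step: the paper first subdivides $\pC_n$ into the coarser pieces $\pD_T$ indexed by plane trees, via an explicit recursive splitting of each inequality $1\le x_i\le 2x_{i-1}$ into ``narrow'' inequalities (conditions I1--I3) together with a reconstruction of the plane tree from the resulting exponent data, and then triangulates each $\pD_T$ by the staircase orderings, which are precisely the $\pS_{T'}$ for labeled trees over $T$; you instead argue directly at the level of labeled trees with a generic-point dissection. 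That is legitimate and a bit more direct, but be aware that the entire content of the paper's coarse-subdivision lemma is hiding inside your ``covering'' sentence: for a generic point the exponents $j_i=\lfloor\log_2 x_i\rfloor$ and the ordering of the scaled coordinates are forced, and you must prove this data is always realizable as the NFS-position/cane-path data of some labeled tree (this is the paper's reconstruction of $T$ from the inequality pattern), not merely that it is unique when it exists; also note there is no graph ``associated'' to a point, so the phrase about reading off $\Phi$ from a point should be replaced by that reconstruction. Finally, in the fiber count make sure to prove both directions, in particular that adding an arbitrary subset of cane edges to $T$ leaves the NFS search tree unchanged, which the paper verifies by following the search step by step.
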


The theorem is proved in Section~\ref{proofs}. Note that Theorem~\ref{thm2} implies
Braun's Conjecture (Theorem~\ref{t:pol}). Figure~\ref{fig8} shows two views of
the resulting triangulation of $\pC_3$.

\begin{figure}[ht!]
 \begin{center}
 \includegraphics[height=5cm]{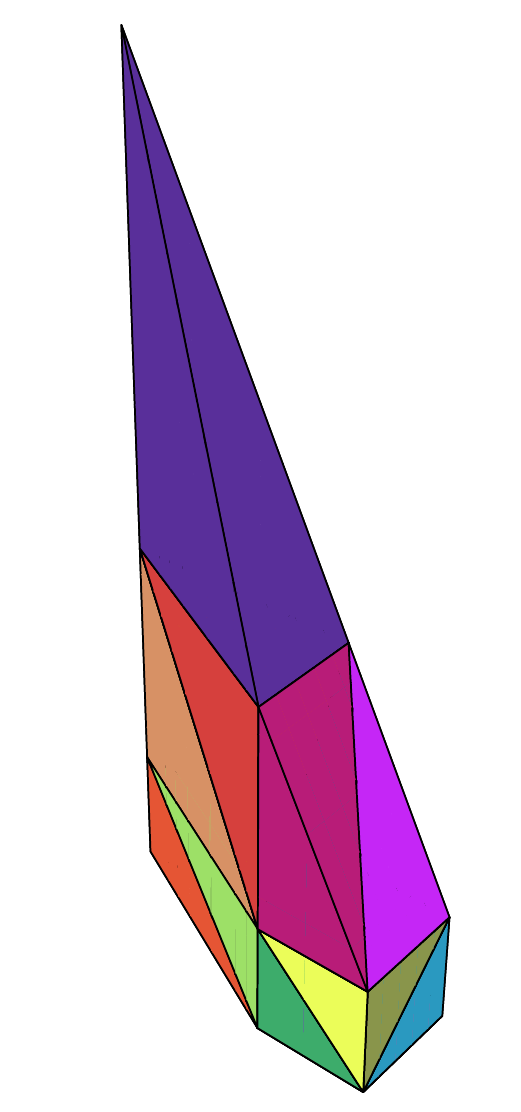}
 \qquad \qquad
 \includegraphics[height=5cm]{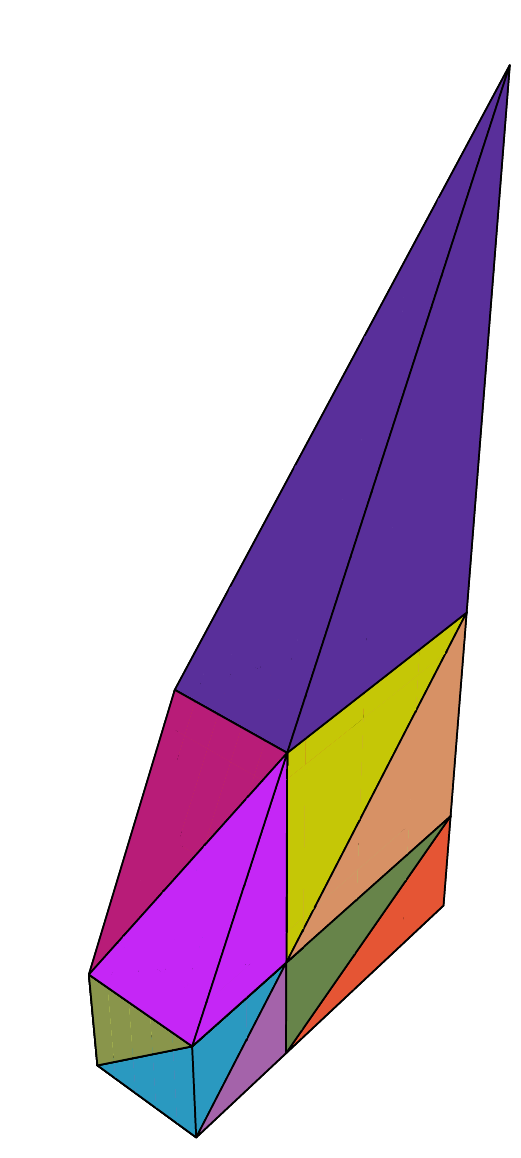}
   \caption{A triangulation of $\pC_3$ from two different angles.}
   \label{fig8}
 \end{center}
\end{figure}

\medskip

\section{Another subdivision of the Cayley polytope} \label{another}

The triangulation of the Cayley polytope described in the previous section
proves Braun's Conjecture by dividing the Cayley polytope into $(n+1)^{n-1}$
simplices. In this section we show how to subdivide the Cayley polytope into a
much smaller number, $\cat(n)$, of polytopes, each a direct product of orthoschemes.
Potentially of independent interest, this constructions paves a way to prove
Theorem~\ref{thm2}.

\smallskip

Start by erasing all labels (but not the coordinates) from the labeled tree $\Phi(G)$,
to make it into a plane tree $\Psi(G)$. For each node $v$ of a plane tree $T$ with successors with
coordinates $x_{i}/2^j,x_{i+1}/2^{j+1},\ldots,x_{i+k-1}/2^{j+k-1}$, take inequalities
$$
1 \. \leq \. x_{i+k-1}/2^{j+k-1} \. \leq \. \ldots \. \leq x_{i+1}/2^{j+1} \. \leq x_{i}/2^j \.
\leq 2\ts.
$$
Equivalently, take inequalities
$$\begin{array}{lclcl}
 2^j &\!\!\!\leq\!\! &x_i & \!\!\!\leq\!\! &2^{j+1}\\
 2^{j+1} &\!\!\!\leq\!\! &x_{i+1} & \!\!\!\leq\!\! &2x_i\\
&&\vdots&& \\
  2^{j+k-1} &\!\!\!\leq\!\! & x_{i+k-1} & \!\!\!\leq\!\! & 2x_{i+k-2}.
\end{array}
$$
%
Denote the resulting polytope $\pD_T$ (see Example~\ref{exm3}).

\begin{thm} \label{thm3}
 For every plane tree $T$ on $n+1$ nodes, the set $\pD_T$ is a bounded polytope, and
$$n!\ts\vol \pD_T \. = \. |\set{G \in \cC_{n+1} \, \,  \mbox{\textup{s.t.}} \ \Psi(G) = T}|
\. = \. 2^{\binom{n+1}2-\sum_{i=1}^{n+1} id_i} \binom n{d_1,d_2,\ldots},
$$
 where $(d_1,\ldots,d_{n+1})$ is the degree sequence of $T$.
Furthermore, polytopes $\pD_T$ form a subdivision of the Cayley polytope $\pC_n$. In particular,
 $$n!\ts\vol \pC_n \. = \. |\set{G \in \cC_{n+1}}| \. = \. C_{n+1}.$$
\end{thm}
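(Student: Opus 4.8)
The plan rests on the observation that $\pD_T$ is, quite literally, a Cartesian product of Schl\"{a}fli orthoschemes. The $n$ non-root nodes of $T$ carry the coordinates $x_1,\ldots,x_n$, one apiece, the root carries the constant $x_0=1$, and the inequalities defining $\pD_T$ come in one block per node $v$ that has successors, the block of $v$ involving only the coordinates $x_i,x_{i+1},\ldots,x_{i+k-1}$ of the $k=k_v$ successors of $v$. Since distinct nodes have disjoint successor sets, $\pD_T=\prod_v R_v$ (product over the nodes with successors), where $R_v=\set{1\le x_{i+k-1}/2^{j+k-1}\le\cdots\le x_i/2^j\le 2}$ and $j=j_v$ is the exponent attached to $v$. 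The rescaling $y_\ell=x_{i+\ell-1}/2^{j+\ell-1}$ turns $R_v$ into a translate of a Schl\"{a}fli orthoscheme with parameters $2^j,2^{j+1},\ldots,2^{j+k-1}$, so $R_v$ is bounded --- hence $\pD_T$ is bounded --- and $\vol R_v=2^{kj+\binom{k}{2}}/k!$. Multiplying over all nodes (leaves contribute $1$) and using $\prod_v 1/k_v!=1/\prod_i d_i!$ gives
$$
n!\,\vol\pD_T\;=\;\binom{n}{d_1,d_2,\ldots}\,2^{\beta(T)},\qquad \beta(T):=\sum_v\Bigl(k_v\,j_v+\binom{k_v}{2}\Bigr).
$$
To recognize this exponent, write $r(u)$ for the number of right siblings of $u$ and $s_u$ for the number of nodes in the subtree at $u$. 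By the defining recursion for the exponents, $j_v=j_{v'}+r(v)$ where $v'$ is the parent of $v$ (cf.\ Lemma~\ref{alpha}); unrolling, $j_v=\sum r(u)$ over the non-root nodes $u$ on the root-to-$v$ path, so $\sum_v k_v j_v=\sum_u r(u)\sum_{w\succeq u}k_w=\sum_u r(u)(s_u-1)$, while $\sum_v\binom{k_v}{2}=\sum_v\sum_{c\text{ a child of }v}r(c)=\sum_u r(u)$; here the outer sums run over non-root $u$, and I used $\sum_{w\succeq u}k_w=s_u-1$. Hence $\beta(T)=\sum_u r(u)\,s_u$. On the other hand, writing $\mathrm{df}(w)$ for the position of $w$ in depth-first order, one has $\binom{n+1}{2}=\sum_w\bigl(\mathrm{df}(w)-1\bigr)$ and $\sum_i i\,d_i=\sum_w\mathrm{df}(\mathrm{parent}(w))$ (the latter over non-root $w$), while $\mathrm{df}(w)-\mathrm{df}(\mathrm{parent}(w))-1$ is the sum of the subtree sizes $s_u$ over the left siblings $u$ of $w$; subtracting the two identities gives $\binom{n+1}{2}-\sum_i i\,d_i=\sum_u r(u)\,s_u$ as well. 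This proves the middle equality of the theorem.

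It remains to identify both of these numbers with $\bigl|\set{G\in\cC_{n+1}:\Psi(G)=T}\bigr|$ and to show that the polytopes $\pD_T$ subdivide $\pC_n$; here the neighbors-first search does the work. For the count I would argue that a graph $G$ with $\Psi(G)=T$ is obtained from two independent choices: a labeling of $T$ that can arise as $\Phi(G)$, of which there are $\binom{n}{d_1,d_2,\ldots}$ --- such a labeling amounts to distributing $1,\ldots,n$ among the successor-groups of the nodes taken in depth-first order, the $i$-th node getting $d_i$ of the labels in increasing order within its group --- together with any subset of the non-tree edges that the search ignores, which turn out to number exactly $\binom{n+1}{2}-\sum_i i\,d_i$; with the first paragraph this gives the first equality. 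For the subdivision I would check $\pD_T\subseteq\pC_n$ (each chain inequality, with its powers of $2$ as intermediate bounds, forces the corresponding defining inequality of $\pC_n$), and then construct a decoding map that sends a point $x$ in the interior of $\pC_n$ avoiding the finitely many hyperplanes bounding faces of the $\pD_T$ to a plane tree $T(x)$, built node by node in imitation of the traversal, and verify that such an $x$ lies in $\pD_T$ exactly when $T=T(x)$. Since the $\pD_T$ are full-dimensional and closed, this exhibits them as a polytopal subdivision of $\pC_n$, so $\vol\pC_n=\sum_T\vol\pD_T$; summing the fiber count over the $\cat(n)$ plane trees on $n+1$ nodes and using that every connected graph on $n+1$ nodes has exactly one image $\Psi(G)$ then gives $n!\,\vol\pC_n=\sum_T|\Psi^{-1}(T)|=|\cC_{n+1}|=C_{n+1}$.

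The first paragraph is routine; the obstacle is the second, and in particular the two statements about the neighbors-first search --- that precisely $\binom{n+1}{2}-\sum_i i\,d_i$ non-tree edges may be inserted without altering the search tree, and that the decoding map $x\mapsto T(x)$ genuinely inverts $T\mapsto\pD_T$, so the $\pD_T$ neither overlap nor leave gaps. Both come down to a careful induction on the traversal, and on the graph side the edge count is in essence the identification of the edges the search ignores with the externally active edges of the search tree regarded as a spanning tree of $K_{n+1}$; this is the technical core the paper develops in its long final section.
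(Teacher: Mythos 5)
Your first paragraph is correct and in fact takes a cleaner route to the volume than the paper does: you exploit that the defining blocks of $\pD_T$ involve disjoint sets of coordinates, so $\pD_T$ is a direct product of Schl\"afli orthoschemes, and you get $n!\,\vol \pD_T=\binom{n}{d_1,d_2,\ldots}2^{\beta(T)}$ at once; your double-counting identity $\beta(T)=\sum_u r(u)s_u=\binom{n+1}2-\sum_i i\,d_i$ (via depth-first positions) checks out and replaces the paper's inductive computation of $\alpha(T)$, while the paper instead cuts $\pD_T$ into the $\binom{n}{d_1,d_2,\ldots}$ simplices $\pS_{T'}$ (the staircase triangulation) whose volume it computes by an explicit affine map to the standard simplex. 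So the middle equality of the theorem is genuinely proved in your write-up, by a mildly different argument.

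The gap is everything you defer in the second paragraph, and it is not peripheral: it is the content of the theorem. Two claims are asserted but not proved. First, the fiber count: you need that $\Phi(G)=T'$ for a labeled tree $T'$ exactly when $E(G)=E(T')\cup C$ with $C$ an arbitrary subset of a distinguished set of ``ignored'' non-tree edges, and that this set has size $\alpha(T)=\binom{n+1}2-\sum_i i\,d_i$. This requires an actual argument about the traversal (the paper's cane-edge lemma: any non-tree edge that is not a cane edge would be grabbed by the NFS and change the search tree, while cane edges never enter it, proved by induction along the search; the count then comes from Lemma~\ref{alpha}). Saying the edges ``turn out to number exactly'' $\binom{n+1}2-\sum_i i\,d_i$, or invoking external activity, does not substitute for this. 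Second, the subdivision: you postulate a decoding map $x\mapsto T(x)$ and the inclusion $\pD_T\subseteq\pC_n$, but you verify neither. The inclusion needs a case analysis comparing consecutive NFS positions (the paper's Lemma~\ref{lemma1}-type argument), and the covering/no-overlap statement needs the structural fact that for a point of $\pC_n$ the coordinate $x_i$ lies in exactly one of the ``narrow'' windows $[(1+t)^{j},(1+t)^{j+1}]$ or $[(1+t)^{j_{i-1}+1},(1+t)x_{i-1}]$ compatible with the previous choices, together with the identification of each resulting cell with a unique plane tree (the paper's conditions I1--I3 and the recursive reconstruction of $T$ from the inequality pattern, including uniqueness via the degree sequence). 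Without these two pieces neither the first equality $n!\,\vol\pD_T=|\{G\in\cC_{n+1}:\Psi(G)=T\}|$ nor the ``Furthermore'' clause (hence the formula for $\vol\pC_n$) is established; your plan is the right one, but as written it is an outline of the paper's Section~\ref{proofs}, not a proof.
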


Figure~\ref{fig9} shows two views of the resulting subdivision of~$\pC_3$.

\begin{figure}[ht!]
 \begin{center}
    \includegraphics[height=5cm]{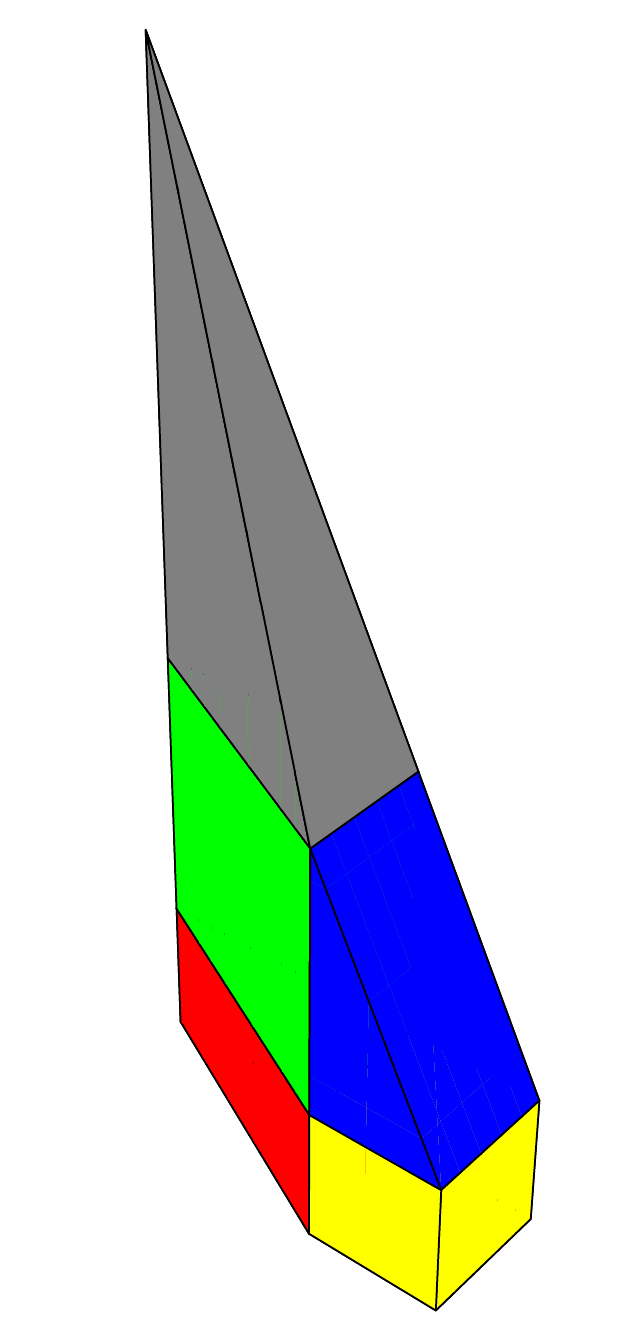}
   \qquad \qquad
   \includegraphics[height=5cm]{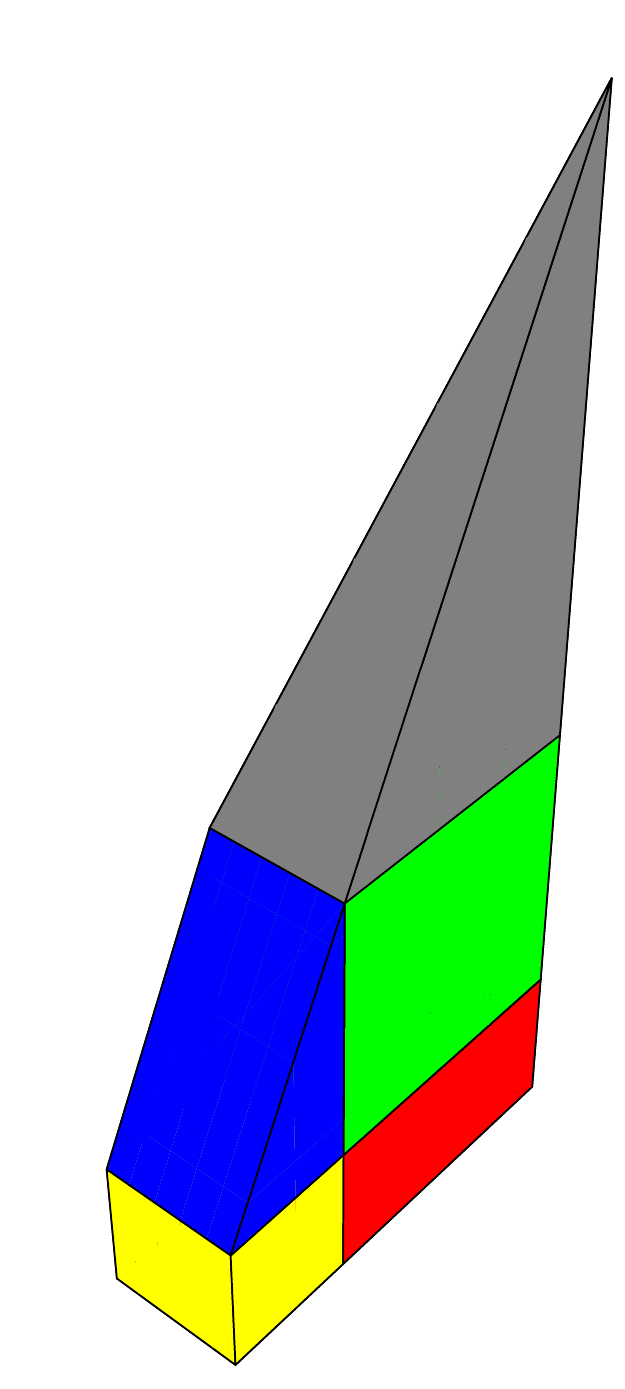}
   \caption{A subdivision of $\pC_3$ from two different angles.}
   \label{fig9}
 \end{center}
\end{figure}

\medskip

\section{The Gayley polytope} \label{gayley}

In this section we introduce the
\emph{Gayley\footnote{Charles Mills Gayley (1858 -- 1932), was a professor of
English and Classics at UC Berkeley; the Los Angeles street on which much of
this research was done is named after him.} polytope} $\pG_n$ which contains
the Cayley polytope~$\pC_n$ and whose volume corresponds to \emph{all} labeled graphs,
not just connected graphs.

Denote by $\cG_n$ the set of labeled graphs on~$n$ nodes.  Obviously, $\cC_n \ssu \cG_n$ and $|\cG_n|=2^{\binom{n}{2}}$.
Replace the $1$'s by $0$'s on the left-hand side of the inequalities defining the Cayley polytope; namely, define
$$\pG_n = \set{(x_1,\ldots,x_n) \colon 0 \leq x_1 \leq 2, 0 \leq x_i \leq 2\ts x_{i-1} \mbox{ for } i = 2,\ldots,n}$$
Note that $\pG_n$ is a Schl\"{a}fli orthoscheme, $\pG_n = \pO(2,4,\ldots,2^n)$, and has volume $2^{\binom{n+1}2}/n!$.
In other words,
$$n! \ts \vol \pG_n \. = \. |\set{G \in \cG_{n+1}}|\ts.$$
Extending the construction in Section~\ref{triang}, we give an explicit triangulation of~$\pG_n$ with simplices
corresponding to labeled forests on $(n+1)$ nodes.  This triangulation will prove useful later.

\smallskip

Start with an arbitrary graph $G$ on $n+1$ nodes. Order the components
so that the maximal labels in the components are decreasing. Perform the
NFS on each component of~$G$ (see Section~\ref{s:comb}).
The result is a labeled forest on $n+1$ nodes, we denote it by $\Phi(G) = F$.
If $v$ has the maximal label in its component and there are $l$ nodes in
previous components, choose the coordinate of $v$ to be $x_l$. In other words,
$l$ is the position of the node in NFS. In particular, the coordinate of the node with label $n+1$ is $x_0$, which we set equal to $1$.
Every other node $v$ has a coordinate of the form $x_i/2^j - x_l$, where $i$ is its
position in NFS, $j$ is the number of cane paths in $F$ starting in $v$, and $l$ is the
maximal label in the component of $v$. Denote the coordinate of the node with label $k$ in a forest $F$ by $c(k,F)$.

\smallskip

Define $\alpha(F) = \sum_k j_k$, where the sum is over nodes that do not have
maximal labels in their components, and the coordinate of the node $k$ is $x_{i_k}/2^{j_k} - x_{l_k}$.
By Lemma \ref{alpha}, $j_k$ is the number of cane paths starting in the node, and $\alpha(F)$
is the number of cane paths in the forest $F$.

\smallskip

Now arrange the coordinates of the nodes $1,\ldots,n+1$ according to the labels. More precisely, define
$$\pS_F = \set{(x_1,\ldots,x_n) \colon 0 \leq c(1,F) \leq c(2,F) \leq \ldots \leq c(n+1,F) = 1}.
$$
See Example~\ref{exm4}.

\smallskip

The two definitions of $\pS_T$ for a tree coincide. Indeed, all the nodes except the one with label $n+1$ have
coordinates of the form $x_i/2^j - 1$, and adding $1$ to all the inequalities from the
new definition of $\pS_T$ gets the inequalities in the first definition.

\begin{thm} \label{thm4}
 For every labeled forest $F$ on $n+1$ nodes, the set $\pS_F$ is a simplex
 (but not in general an orthoscheme), and
 $$
 n!\ts\vol \pS_F \. = \. |\set{G \in \cG_{n+1} \, \,\mbox{\textup{s.t.}} \ \Phi(G) = F}|
 \. = \. 2^{\alpha(F)}\ts.
 $$
Furthermore, simplices $\pS_F$ triangulate the Gayley polytope $\pG_n$. In particular,
 $$
 n!\ts\vol \pG_n \. = \. |\set{G\in \cG_{n+1}}| \. = \. 2^{\binom{n+1}2}\ts.
 $$
\end{thm}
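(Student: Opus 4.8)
The plan is to mirror the structure of the proof of Theorem \ref{thm2}, but working component-by-component. First I would establish that $\pS_F$ is a genuine simplex: the defining inequalities $0 \le c(1,F) \le \dots \le c(n+1,F) = 1$ are $n$ linear inequalities in $n$ unknowns (since $c(n+1,F)=1$ fixes $x_0$), so $\pS_F$ is an intersection of $n$ halfspaces with one equality, hence a simplex provided the bounding hyperplanes are in general position; this follows from the explicit triangular form of the change of coordinates $x_i \mapsto c(k,F)$, exactly as in Example \ref{exm2}. To compute $n!\,\vol\pS_F$ I would pass to the coordinates $y_k := c(\pi^{-1}(k),F)$ ordered by label, in which $\pS_F$ becomes the standard order simplex $0 \le y_1 \le \dots \le y_{n+1} = 1$; the volume is then $1/n!$ times the Jacobian of the linear map from $(x_1,\dots,x_n)$ to $(y_1,\dots,y_n)$. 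Each non-root node has coordinate $x_{i}/2^{j} - x_{l}$, and the Jacobian, read off the NFS-ordering, is a product of the scaling factors $2^{-j}$ over the non-root nodes; inverting gives $n!\,\vol\pS_F = 2^{\sum j_k} = 2^{\alpha(F)}$, using Lemma \ref{alpha} for the last equality.

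The combinatorial half — that $2^{\alpha(F)}$ equals the number of graphs $G \in \cG_{n+1}$ with $\Phi(G)=F$ — I would prove by a direct bijection, which is the real content and the step I expect to be the main obstacle. The idea is the one sketched in the introduction: a graph $G$ with $\Phi(G)=F$ is obtained from $F$ by adding a set of "extra" edges, and the NFS algorithm constrains which extra edges are admissible. Concretely, when the NFS visits a node $v$ at position $i$ and backtracks, the edges from $v$ to already-visited nodes that do \emph{not} lie in $F$ can be added freely precisely when doing so does not change the search tree; I would show the admissible extra edges incident to $v$ are in bijection with the cane paths starting at $v$, so that the number of choices is $2^{(\text{number of cane paths at }v)}$, and multiplying over all nodes gives $2^{\alpha(F)}$. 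This requires a careful case analysis of the NFS on each component — the per-component argument is essentially the tree case of Theorem \ref{thm2}, and since $\Phi$ acts on the components independently (the component order is determined by the decreasing maximal labels, and NFS never crosses components), the forest statement follows by taking products over components, with $\alpha(F) = \sum_{\text{components } T} \alpha(T)$.

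Finally, for the statement that the simplices $\pS_F$ triangulate $\pG_n$, I would argue: (i) the $\pS_F$ have pairwise disjoint interiors, since a generic point $(x_1,\dots,x_n) \in \pG_n$ determines, via the value of the map $\Phi$ applied to the "$2$-adic comparison data" of the coordinates, a unique forest $F$ with the point in the interior of $\pS_F$ — this is where one uses that $\Phi$ is well-defined and that the inequalities defining $\pG_n$ decompose along the NFS-tree structure; and (ii) $\sum_F \vol \pS_F = \vol \pG_n$, which now follows by summing the volume formula just proved:
$$
n!\sum_{F}\vol\pS_F \;=\; \sum_{F} |\{G \in \cG_{n+1} : \Phi(G)=F\}| \;=\; |\cG_{n+1}| \;=\; 2^{\binom{n+1}{2}} \;=\; n!\,\vol\pG_n,
$$
using $\pG_n = \pO(2,4,\dots,2^n)$ from Section \ref{gayley}. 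Together (i) and (ii) give that the $\pS_F$ form a dissection, and the explicit facet description shows it is a polytopal subdivision; the displayed identity $n!\,\vol\pG_n = |\cG_{n+1}|$ is then immediate. The one subtlety to handle with care is the degenerate locus where two coordinates satisfy $x_i/2^j = x_{i'}/2^{j'}$ exactly, or where some $c(k,F)$ hits $0$; these form a measure-zero set (a finite union of hyperplanes), so they do not affect the volume count, but I would note explicitly that every such point lies on the boundary of the relevant simplices, so no overcounting occurs.
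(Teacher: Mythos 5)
Your first two steps are sound and essentially coincide with the paper's proof: the volume of $\pS_F$ is computed there by a triangular (affine) change of variables onto the order simplex with determinant read off the diagonal entries $2^{-j}$ (done in the $t$-deformed setting of Theorem~\ref{thm8}), and the count of $\set{G\in\cG_{n+1} : \Phi(G)=F}$ is obtained exactly as you propose, by the cane-edge lemma within each component and a product over components, using $\alpha(F)=\sum_j\alpha(T_j)$. (Two harmless slips: the chain $0\le c(1,F)\le\cdots\le c(n+1,F)=1$ comprises $n+1$ halfspace conditions, not $n$, and your $\pi$ is never defined; neither affects the argument.)

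The genuine gap is your step (i). The claim that a generic point of $\pG_n$ ``determines, via the value of the map $\Phi$ applied to the $2$-adic comparison data of the coordinates, a unique forest $F$ with the point in the interior of $\pS_F$'' is not an argument: $\Phi$ is defined on graphs, not on points, and the exponents $j$ needed to compare $x_i/2^{j}$ with $x_{i'}/2^{j'}$ themselves depend on the unknown forest, so the proposed reconstruction is circular as stated. This covering/disjointness statement is precisely the nontrivial geometric content of the theorem, and the paper supplies it constructively: given $(x_1,\dots,x_n)\in\pG_n$, take the largest $k$ with $x_k\ge 1$; then $(x_1,\dots,x_k)\in\pC_k$, $0\le x_{k+1}\le 1$, and $(x_{k+2},\dots,x_n)\in x_{k+1}\pG_{n-k-1}$, which together with the inequality-splitting lemma (conditions I1--I3) for the Cayley prefix and induction yields first the coarse subdivision into the polytopes $\pD_F$ indexed by plane forests, each of which is then refined into the simplices $\pS_{F'}$ by choosing the relative order of coordinates of siblings. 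Your volume-sum step (ii) is a legitimate shortcut not used in this form by the paper --- once a.e.-covering with uniqueness is actually proved, the identity $\sum_F 2^{\alpha(F)}=2^{\binom{n+1}{2}}=n!\,\vol\pG_n$ finishes the dissection claim and even forces $\pS_F\subseteq\pG_n$, a containment you never address and which the paper proves as a separate lemma --- but as written the reconstruction step on which everything hinges is missing, and filling it in amounts to carrying out the paper's inductive decomposition.
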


Although we already have a simple closed formula for the volume of Gayley polytopes,
this result is a stepping stone towards our studies of Tutte polytopes (see below).
The proof of the theorem is given in Section~\ref{proofs}, and follows the same
pattern as the proof of Theorem~\ref{thm2}.

\smallskip

By analogy with Cayley polytopes, let us show that Gayley polytope can
also be subdivided into a smaller number,
$\cat(n+1)$, of polytopes.  Given $\s P \subset \R^n$, define by
$$a \s P = \set{(ax_1,\ldots,ax_n) \colon (x_1,\ldots,x_n) \in \s P} \subset \R^n$$
the dilation of $\s P$ by $a \in \R$, and by
$$\cone(\s P) = \set{(x_0,x_1,\ldots,x_n) \colon 0 \leq x_0 \leq 1, (x_1,\ldots,x_n) \in x_0 \s P} \subset \R^{n+1}.$$
the cone with apex $0$ and base $\set 1 \times \s P$.

\smallskip

For an arbitrary graph $G$ on $n+1$ nodes, find the corresponding labeled forest $\Phi(G)$ and delete the labels to get a plane forest $\Psi(G)$ on $n+1$ nodes. For a plane forest $F$ on $n+1$ with components (plane trees) $T_1,T_2,T_3,\ldots$, define
$$\pD_F \. = \. \pD_{T_1} \times \cone(\pD_{T_2} \times \cone(\pD_{T_3} \times \cdots)).
$$

\begin{prop} \label{prop1}
 Take a plane forest $F$. For a node $w$ that is a root of its component, define coordinate $c(w,F) = x_l$, where $l$ is its position in NFS (equivalently, the components to the left have $l$ nodes total). For a node $v \neq w$ in the same component, define $c(v,F) = x_i/2^j - x_l$, where $i$ is its position in NFS and $j$ is the number of cane paths in $F$ starting in $v$. \.
 For each node with successors $v_1,\ldots,v_k$ (from left to right), take inequalities
 $$0 \leq c(v_1,F) \leq \ldots c(v_k,F) \leq c(w,F).$$
 Furthermore, if $w_1,\ldots,w_m$ are the roots of $F$ (from left to right), take inequalities
 $$0 \leq c(w_m,F) \leq \ldots \leq c(w_1,F) = 1.$$
 The resulting polytope is precisely $\pD_F$.
\end{prop}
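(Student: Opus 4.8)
The plan is to unwind the recursive definition $\pD_F=\pD_{T_1}\times\cone\bigl(\pD_{T_2}\times\cone(\pD_{T_3}\times\cdots)\bigr)$ into a single system of linear inequalities and to match it, inequality by inequality, with the list in the statement. Write $T_1,\dots,T_s$ for the components of $F$ from left to right, let $m_c$ be the number of nodes of $T_c$, and set $M_0=0$ and $M_c=m_1+\dots+m_c$, so $M_s=n+1$; in the forest NFS the components are traversed one after another, hence the root $w_c$ of $T_c$ sits at position $M_{c-1}$ and the remaining nodes of $T_c$ occupy positions $M_{c-1}+1,\dots,M_c-1$, in the same order as in the standalone NFS of $T_c$. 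First I would spell out the definitions of product, dilation and $\cone$: on iterating the definition of $\cone$ the successive divisions by apex-coordinates collapse, and one finds that $(x_1,\dots,x_n)\in\pD_F$ precisely when, writing $x_{M_0}=1$,
$$0\le x_{M_{s-1}}\le x_{M_{s-2}}\le\cdots\le x_{M_1}\le x_{M_0}=1$$
and, for every $c$, the rescaled tuple $\bigl(x_{M_{c-1}+1}/x_{M_{c-1}},\dots,x_{M_c-1}/x_{M_{c-1}}\bigr)$ lies in the standalone polytope $\pD_{T_c}$. These rescalings make sense on the dense open set where $x_{M_1},\dots,x_{M_{s-1}}$ are all positive; on the complement, if some $x_{M_{c-1}}$ vanishes then for the least such $c$ both descriptions force $x_{M_{c-1}}=x_{M_{c-1}+1}=\cdots=x_n=0$, so it is enough to prove the equality on the open set and appeal to closedness of both sides.

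Second, I would identify the rescaled coordinate $x_{M_{c-1}+p}/x_{M_{c-1}}$ of a non-root node $v\in T_c$ (where its NFS position is $i=M_{c-1}+p$) with the coordinate of $v$ used in building $\pD_{T_c}$. The only genuinely combinatorial input is that the exponent $j$ in $c(v,F)=x_i/2^{j}-x_{M_{c-1}}$ is insensitive to whether cane paths are counted in $F$ or in $T_c$: a cane path starting at $v$ first climbs to an ancestor of $v$ and then takes a single down-right step to another subtree of that ancestor, so it never leaves the component of $v$. Hence the number of cane paths from $v$ in $F$ equals the number in $T_c$, which by Lemma~\ref{alpha} is exactly the exponent attached to $v$ in the construction of $\pD_{T_c}$; thus the $\pD_{T_c}$-coordinate of $v$ equals $x_i/(2^{j}x_{M_{c-1}})$.

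With these two reductions the remainder is bookkeeping. Fix an internal node $u\in T_c$ with children $v_1,\dots,v_k$ from left to right; by the previous paragraph their $\pD_{T_c}$-coordinates are $s_t=x_{i_t}/(2^{j_t}x_{M_{c-1}})$, and the defining inequality of $\pD_{T_c}$ at $u$ reads $1\le s_1\le\cdots\le s_k\le 2$. Multiplying through by $x_{M_{c-1}}>0$ and subtracting $x_{M_{c-1}}$ turns this into $0\le c(v_1,F)\le\cdots\le c(v_k,F)\le c(w_c,F)$, because $c(w_c,F)=x_{M_{c-1}}$ and $c(v_t,F)=x_{M_{c-1}}s_t-x_{M_{c-1}}$; these are precisely the inequalities attached in the statement to the internal nodes of $F$ (the case $u=w_c$ included). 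Likewise the chain displayed above is exactly $0\le c(w_s,F)\le\cdots\le c(w_1,F)=1$ on the roots. Hence the polytope defined in the statement coincides with $\pD_F$, and it is bounded because each $\pD_{T_c}$ is bounded (Theorem~\ref{thm3}) and cones and products of bounded polytopes are bounded. The step I expect to require the most care is keeping the left-to-right ordering of children consistent between the statement and the construction of $\pD_{T_c}$, so that the two chains $1\le s_1\le\cdots\le s_k\le 2$ and $0\le c(v_1,F)\le\cdots\le c(v_k,F)$ run in the same direction; everything else is routine verification.
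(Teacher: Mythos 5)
Your proposal is correct and follows essentially the same route as the paper: the paper's proof of the $t$-analogue (Proposition~\ref{prop2}), of which Proposition~\ref{prop1} is the case $t=1$, is exactly this unwinding of the iterated cone --- rescaling each later component by its apex coordinate $x_l$, multiplying back and subtracting $x_l$, observing that coning again leaves the inequalities unchanged, and reading off the root chain $0\leq x_{l}\leq x_{l'}$. Your additional points (the collapsed description of the iterated cone, the degenerate case where an apex coordinate vanishes, and the fact that cane paths from a node stay inside its component) only make explicit what the paper leaves implicit.
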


See Example~\ref{exm5}.  We need this proposition for the following theorem, aimed
towards generalizations in the next sections.

\begin{thm} \label{thm5}
  For every plane forest $F$ on $n+1$ nodes, the set $\pD_F$ is a bounded polytope, and
 $$
 n!\ts\vol \pD_F \. = \. \left|\set{G \in \cG_{n+1} \, \mbox{\textup{s.t.}} \ \Psi(G) = F}\right|
 \. = \. \frac{\binom n{d_1,d_2,\ldots}} {\prod_{j=2}^m(a_j+\ldots+a_m)} \cdot
 2^{\binom{n+2-m}2 - \sum_{i=1}^{n+1-m} i d_i}\ts,
$$
where $(d_1,\ldots,d_{n+1-m})$ is the reduced degree sequence of~$F$.  Furthermore,
polytopes $\pD_F$ form a subdivision of the Gayley polytope~$\pG_n$.  In particular,
$$
n!\ts\vol \pG_n \. = \. \left|\set{G \in \cG_{n+1}}\right| \. = \. 2^{\binom{n+1}2}.
$$
\end{thm}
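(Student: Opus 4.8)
\textbf{Proof plan for Theorem~\ref{thm5}.}

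The plan is to mirror, component by component, the strategy already used for Theorem~\ref{thm3}, lifting it to forests via the cone-and-product construction $\pD_F = \pD_{T_1}\times\cone(\pD_{T_2}\times\cone(\cdots))$. First I would verify boundedness: each $\pD_{T_i}$ is bounded by Theorem~\ref{thm3}, and coning a bounded set over the segment $0\le x_0\le 1$ keeps it bounded, so an easy induction on the number of components gives that $\pD_F$ is a bounded polytope. Next I would compute $\vol\pD_F$ from Proposition~\ref{prop1}: the volume of a cone $\cone(\s P)\subset\R^{n+1}$ over base $\set1\times\s P$ with apex $0$ is $\vol(\s P)/(n+1)$ after integrating the scaling factor $x_0^{n}$, but here the coning is interleaved with products, so the cleanest route is to integrate out the root coordinates $c(w_1,F)=1\ge c(w_2,F)\ge\cdots\ge c(w_m,F)\ge 0$ one at a time: for the $j$-th component the remaining coordinates live in a dilate of $\pD_{T_j}$ by the factor $c(w_j,F)$, contributing a factor $c(w_j,F)^{|T_j|-1}$. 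Setting $a_j=|T_j|-1$ (the number of non-root nodes of $T_j$), the iterated integral over the simplex $1\ge c(w_2,F)\ge\cdots\ge c(w_m,F)\ge 0$ of $\prod_{j=2}^m c(w_j,F)^{a_j}$ evaluates to $1/\prod_{j=2}^m(a_j+a_{j+1}+\cdots+a_m+1)$ — wait, one must be careful with the exact denominator; I would check against Example~\ref{exm5} that it comes out to $\prod_{j=2}^m(a_j+\cdots+a_m)$ as stated (the shift by one in the exponent count from the root coordinate itself being integrated produces exactly that), and then multiply by the individual volumes $n!\,\vol\pD_{T_j}=2^{\binom{a_j+1}2-\sum i d_i^{(j)}}\binom{a_j}{d^{(j)}}$ from Theorem~\ref{thm3}, reassembling the multinomial $\binom{n}{d_1,d_2,\ldots}$ and the exponent $\binom{n+2-m}2-\sum_{i=1}^{n+1-m} i d_i$ from the per-component contributions plus the cross terms.

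For the enumerative identity $n!\,\vol\pD_F=|\set{G\in\cG_{n+1}:\Psi(G)=F}|$, I would argue that the graphs $G$ with $\Psi(G)=F$ are obtained by first choosing a labeling of the plane forest $F$ compatible with the NFS component-ordering and rooting conventions — this is where the multinomial $\binom{n}{d_1,\ldots}$ and the ordering-of-components factor $1/\prod(a_j+\cdots+a_m)$ come from, exactly as in Gessel–Sagan–type counts — and then, for each such labeled forest $F'=\Phi^{-1}$-preimage structure, counting the back-edges that can be added without changing the NFS output, which contributes $2^{\alpha(F')}$ and sums to the stated power of $2$; this is precisely the content of Theorem~\ref{thm4} disaggregated by the underlying plane forest, so I would deduce it by summing the identity of Theorem~\ref{thm4} over all labeled forests $F'$ with $\Psi$-image $F$, checking that $\sum_{F':\,\Psi(F')=F} 2^{\alpha(F')}$ matches the claimed product formula (the cane-path count $\alpha$ depends only on the plane structure when the labeling respects depth-first order, so this sum is just a multinomial-weighted single power of $2$).

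Finally, to see that the $\pD_F$ form a subdivision of $\pG_n$, I would show the pieces cover $\pG_n$ and have disjoint interiors. Covering: given any $(x_1,\ldots,x_n)\in\pG_n$, run the deterministic process that reads off which inequalities among the defining chain are "tight in the rounding sense" — equivalently, take a generic graph $G$ (or directly a point) mapping to it and set $F=\Psi(G)$; one checks the coordinate assignment of Proposition~\ref{prop1} places the point in $\pD_F$. Disjoint interiors: if a point lies in the interior of both $\pD_F$ and $\pD_{F'}$, then the strict chains of inequalities force the same sequence of "which coordinate is which $x_i/2^j-x_l$", hence the same NFS traversal data, hence $F=F'$; this is the same argument as for Theorem~\ref{thm3} applied on each component and propagated through the coning. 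Since the volumes $n!\,\vol\pD_F$ sum over all plane forests $F$ on $n+1$ nodes to $\sum_F|\set{G:\Psi(G)=F}|=|\cG_{n+1}|=2^{\binom{n+1}2}=n!\,\vol\pG_n$ (using that $\pG_n=\pO(2,4,\ldots,2^n)$), the subdivision claim and the volume formula for $\pG_n$ follow together. The main obstacle I anticipate is bookkeeping the interleaved product–cone volume integral so that the denominator $\prod_{j=2}^m(a_j+\cdots+a_m)$ and the exponent $\binom{n+2-m}2-\sum i d_i$ emerge with the correct index ranges; everything else is a routine adaptation of the tree case.
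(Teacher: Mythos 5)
Your plan follows essentially the same route as the paper: boundedness by induction on components, the volume factor $1/\prod_{j=2}^m(a_j+\cdots+a_m)$ from the interleaved cone-and-product structure (the paper encapsulates your iterated integral in a one-line cone-volume lemma), the multinomial/power-of-two formula by aggregating the simplex volumes of Theorem~\ref{thm4} over labelings, and the subdivision claim by determining, from a given point of $\pG_n$, which inequalities pin down the plane forest (the paper does this by splitting at the largest $k$ with $x_k\geq 1$ and inducting); the paper actually derives Theorem~\ref{thm5} as the $t=1$ case of Theorem~\ref{thm9}, but that is only an organizational difference. One small correction: in the paper's notation $a_j$ is the number of nodes of $T_j$ (not the number of non-root nodes as you set it), and with that convention the iterated integral $\int_{1\geq c_2\geq\cdots\geq c_m\geq 0}\prod_{j\geq 2}c_j^{a_j-1}\,dc$ gives exactly $1/\prod_{j=2}^m(a_j+\cdots+a_m)$ with no further shift.
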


\medskip

\section{$t$-Cayley and $t$-Gayley polytopes} \label{t}

The constructions from the previous sections are easily adapted to weighted generalizations. Our presentation, the order and even shape of the results mimic the sections on Cayley and Gayley polytopes.  All proofs are moved to Section~\ref{proofs}, as before.

\smallskip

For $t \geq 0$, define the \emph{$t$-Cayley polytope} $\pC_n(t)$ and the \emph{$t$-Gayley polytope} $\pG_n(t)$ by replacing all $2$'s in the definition by $1 + t$. More precisely, define
$$\pC_n(t) = \set{(x_1,\ldots,x_n) \colon 1 \leq x_1 \leq 1 + t, 1 \leq x_i \leq (1 + t) x_{i-1} \mbox{ for } i = 2,\ldots,n}$$
and
$$\pG_n(t) = \set{(x_1,\ldots,x_n) \colon 0 \leq x_1 \leq 1 + t, 0 \leq x_i \leq (1 + t) x_{i-1} \mbox{ for } i = 2,\ldots,n}.$$

We can triangulate the polytopes $\pC_n(t)$ and $\pG_n(t)$ (or subdivide them into larger polytopes like in Sections~\ref{another} and~\ref{gayley}) in a very similar fashion as $\pC_n$ and $\pG_n$. For a labeled tree $T$ on $n+1$ nodes, attach a coordinate of the form $x_i/(1 + t)^j$ to each node $v$ of $T$, where $i$ is the position of $v$ in NFS, and $j$ is the number of cane paths starting in $v$. Arrange the coordinates of the nodes according to the labels. More precisely, define
$$\pS_T(t) = \set{(x_1,\ldots,x_n) \colon 1 \leq x_{i_1}/(1 + t)^{j_1} \leq x_{i_2}/(1 + t)^{j_2} \leq \ldots \leq x_{i_n}/(1 + t)^{j_n} \leq 1 + t},$$
where the coordinate of the node with label $k$ is $x_{i_k}/(1 + t)^{j_k}$. Note that the simplices $\pS_T(t)$ are also orthoschemes (see Example~\ref{exm6}). 

\begin{thm} \label{thm6}
 For every labeled tree $T$ on $n+1$ nodes, the set $\pS_T(t)$ is a simplex, and
 $$
 n!\ts\vol \pS_T(t) \, = \, \sum_{G\in \cC_{n+1}, \, \Phi(G) = T} t^{|E(G)|}
 \, = \, t^n (1 + t)^{\alpha(T)}\ts.
 $$
Furthermore, simplices $\pS_T(t)$ triangulate the $t$-Cayley polytope $\pC_n(t)$. In particular,
 $$n!\ts\vol \pC_n(t) \, = \, \sum_{G\in \cC_{n+1}} t^{|E(G)|}\ts.
 $$
 \end{thm}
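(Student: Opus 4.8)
The plan is to carry out the proof of Theorem~\ref{thm2} with the constant $2$ replaced by $1+t$ throughout and with the extra statistic $t^{|E(G)|}$ recorded along the way. There are three things to establish: that $\pS_T(t)$ is a simplex of the stated volume; that $\sum_{\Phi(G)=T}t^{|E(G)|}$ equals $n!\ts\vol\pS_T(t)$; and that the simplices $\pS_T(t)$ form a subdivision of $\pC_n(t)$.

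For the first point I would use the diagonal linear change of coordinates $y_\ell=x_\ell/(1+t)^{j_\ell}$, where $j_\ell$ is the number of cane paths of $T$ starting at the node in position $\ell$ of the NFS (Lemma~\ref{alpha}). Its Jacobian equals $(1+t)^{-\sum_\ell j_\ell}=(1+t)^{-\alpha(T)}$, and it carries $\pS_T(t)$ bijectively onto one of the $n!$ congruent simplices into which the hyperplanes $y_a=y_b$ cut the cube $[1,1+t]^n$; each such piece is a path-simplex of volume $t^n/n!$. Rescaling back shows that $\pS_T(t)$ is an orthoscheme and that $n!\ts\vol\pS_T(t)=t^n(1+t)^{\alpha(T)}$.

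The second point is the combinatorial core, and it is exactly where the analysis of the NFS from the proof of Theorem~\ref{thm2} is reused unchanged: one shows that a connected graph $G$ on $n+1$ nodes satisfies $\Phi(G)=T$ if and only if $G$ is obtained from $T$ by adjoining an arbitrary subset of a canonical set of $\alpha(T)$ admissible non-tree edges, one for each cane path of $T$. Granting this, a graph in the fiber that uses $m$ of these edges has $|E(G)|=n+m$, so
$$
\sum_{G\in\cC_{n+1},\ \Phi(G)=T} t^{|E(G)|} \;=\; \sum_{m=0}^{\alpha(T)}\binom{\alpha(T)}{m}\,t^{n+m} \;=\; t^{n}(1+t)^{\alpha(T)},
$$
which is $n!$ times the volume computed above.

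For the subdivision I would first verify the inclusion $\pS_T(t)\subseteq\pC_n(t)$: the defining chain of $\pS_T(t)$ forces $1\le x_1\le 1+t$, and for $i\ge 2$ it forces $1\le x_i\le(1+t)\ts x_{i-1}$, by a short case analysis of how the node in position $i$ of the NFS sits relative to the node in position $i-1$ (its first child, its next sibling, or a node reached after backtracking), using in each case the relation between the exponents $j_{i-1}$ and $j_i$ that Lemma~\ref{alpha} records. It then remains to prove that the $\pS_T(t)$ have pairwise disjoint interiors and cover $\pC_n(t)$; I would obtain this by showing that a generic point $x\in\pC_n(t)$ lies in exactly one $\pS_T(t)$, reconstructing the tree $T$ from $x$ by running an NFS in which comparisons among the coordinate values at $x$ take the place of the adjacency relation of a graph. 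This reconstruction, together with the fiber description in the previous paragraph, is the main obstacle; note that one cannot bypass the covering step by comparing total volumes, since $\vol\pC_n(t)$ is precisely the quantity being computed. The final identity then follows by summing the first one over all $(n+1)^{n-1}$ labeled trees on $n+1$ nodes.
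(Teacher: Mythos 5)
Your proposal is correct and follows essentially the same route as the paper: an affine change of variables giving $n!\,\vol\pS_T(t)=t^n(1+t)^{\alpha(T)}$, the characterization of the fiber $\Phi^{-1}(T)$ as $T$ together with an arbitrary subset of the $\alpha(T)$ cane edges (this characterization is indeed independent of $t$, so invoking the ``$2$ becomes $1+t$'' transfer is legitimate --- although in the paper the logical order is reversed: Theorem~\ref{thm2} is obtained from this theorem by setting $t=1$), and the three-case analysis of consecutive NFS nodes showing $\pS_T(t)\subseteq\pC_n(t)$. The one step you explicitly leave as an obstacle --- that a generic point of $\pC_n(t)$ lies in exactly one $\pS_T(t)$ --- is exactly where the paper inserts the coarser subdivision: it proves that the defining inequalities of $\pC_n(t)$ split into ``narrower'' systems (conditions I1--I3), identifies these systems with the polytopes $\pD_T(t)$ indexed by plane trees (with uniqueness of the plane tree), and then cuts each $\pD_T(t)$ into the simplices $\pS_{T'}(t)$ by sorting the coordinates of siblings; this two-stage procedure (first the exponents $j_i$, then the labels) is the rigorous version of the greedy reconstruction you sketch, so your plan is sound but that paragraph is the part that still needs to be written out.
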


A similar construction works for the other subdivision. As in the non-weighted case, erase all labels from the labeled tree $\Phi(G)$ to make it into a plane tree $\Psi(G)$. For each node $v$ with successors with coordinates $x_{i}/(1 + t)^j,x_{i+1}/(1 + t)^{j+1},\ldots,x_{i+k-1}/(1 + t)^{j+k-1}$, take inequalities
$$1 \leq x_{i+k-1}/(1 + t)^{j+k-1} \leq \ldots \leq x_{i+1}/(1 + t)^{j+1} \leq x_{i}/(1 + t)^j \leq 1 + t.$$
Denote the resulting polytope $\s D_T(t)$ (see Example~\ref{exm7}).

\begin{thm} \label{thm7}
 For every plane tree $T$ on $n+1$ nodes, the set $\pD_T(t)$ is a bounded polytope, and
 $$
 n!\ts\vol \pD_T(t) \. = \. \sum_{G\in \cG_{n+1}\ts, \ \Psi(G) = T} \ts t^{|E(G)|}
 \. = \. t^n (1 + t)^{\binom{n+1}2-\sum_{i=1}^{n+1} id_i} \cdot \binom n{d_1,d_2,\ldots},
 $$
 where $(d_1,\ldots,d_{n+1})$ is the degree sequence of $T$. Furthermore,
 polytopes $\pD_T(t)$ form a subdivision of the Cayley polytope $\pC_n(t)$. In particular,
 $$
 n!\ts\vol \pC_n(t) \. = \. \sum_{G\in \cG_{n+1}} \ts t^{|E(G)|}\ts.
 $$
 \end{thm}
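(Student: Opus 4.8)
The plan is to derive Theorem~\ref{thm7} from Theorem~\ref{thm6} (which triangulates $\pC_n(t)$ by the orthoschemes $\pS_{T'}(t)$, $T'$ running over labeled trees on $n+1$ nodes), by showing that $\pD_T(t)$ is exactly the union of those $\pS_{T'}(t)$ whose underlying plane tree is $T$. Write $\Psi(T')$ for the plane tree obtained from a labeled tree $T'$ by deleting labels (so $\Psi(\Phi(G)) = \Psi(G)$ for a graph $G$), and call $T'$ a \emph{labeling of} $T$ when $\Psi(T') = T$, i.e.\ $T'$ has the shape of the plane tree $T$, is rooted at the node labeled $n+1$, and has siblings increasing from left to right. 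The central claim I would establish is
$$
(\ast)\qquad \pD_T(t) \. = \. \bigcup_{\Psi(T') = T} \pS_{T'}(t),
$$
with the pieces having pairwise disjoint interiors; granting $(\ast)$, all three assertions of the theorem (that $\pD_T(t)$ is a bounded polytope, its volume, and the subdivision) follow readily.

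To prove $(\ast)$ I would first recall that the inequalities defining $\pD_T(t)$ split into one ``block'' per non-leaf node $v$ of $T$: writing $v_1 < \dots < v_k$ for the children of $v$ from left to right, with coordinates $x_i/(1+t)^j, x_{i+1}/(1+t)^{j+1}, \dots, x_{i+k-1}/(1+t)^{j+k-1}$, the block is $1 \le x_{i+k-1}/(1+t)^{j+k-1} \le \dots \le x_i/(1+t)^j \le 1+t$. Since every non-root node is a child of exactly one $v$, distinct blocks involve disjoint coordinate sets; the lower bounds force $x_i \ge 1$ and iterating the upper bounds within a block bounds every $x_i$ above, so $\pD_T(t)$ is a bounded polytope. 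For the inclusion $\supseteq$ in $(\ast)$: for a labeling $T'$ of $T$ the siblings $v_1, \dots, v_k$ carry increasing labels, so in the single chain $1 \le c_1 \le \dots \le c_n \le 1+t$ cutting out $\pS_{T'}(t)$ (with $c_\ell$ the coordinate of the node labeled $\ell$) the coordinates of $v_1, \dots, v_k$ occur in that very order, and comparison with the block gives $\pS_{T'}(t) \ssu \pD_T(t)$. For $\subseteq$: given $x \in \pD_T(t)$, all its node-coordinates lie in $[1, 1+t]$ and, within each sibling group, in left-to-right order, so sorting the $n$ non-root coordinates (ties broken in favor of the left sibling) yields a labeling $T'$ of $T$ with $x \in \pS_{T'}(t)$. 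Disjoint interiors then come from Theorem~\ref{thm6}: the $\pS_{T'}(t)$ triangulate $\pC_n(t)$, so a point of $\pD_T(t)$ with pairwise distinct coordinates lies in the interior of a unique $\pS_{T'}(t)$, which forces $\Psi(T') = T$; as distinctness fails only on a set of measure zero, $(\ast)$ follows, and with it the interiors of $\pD_T(t)$ and $\pD_{T''}(t)$ are disjoint for $T \ne T''$, and $\pD_T(t) \ssu \pC_n(t)$.

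Granting $(\ast)$, I would finish as follows. The labelings of $T$ are exactly the linear extensions of the disjoint union, over non-leaf $v$, of the chains of sizes $k_v$ formed by $v$'s children; hence there are $n!/\prod_v k_v! = \binom{n}{d_1,d_2,\ldots}$ of them, where $(d_1,\ldots,d_{n+1})$ is the degree sequence of $T$. Also $\alpha(T') = \alpha(T)$ for every labeling $T'$ of $T$, since a cane path depends only on the rooted planar structure. Then from $(\ast)$ and Theorem~\ref{thm6},
$$
n!\ts\vol\pD_T(t) \. = \sum_{\Psi(T')=T} n!\ts\vol\pS_{T'}(t) \. = \sum_{\Psi(T')=T} t^n(1+t)^{\alpha(T')} \. = \. t^n(1+t)^{\alpha(T)}\binom{n}{d_1,d_2,\ldots},
$$
and $\alpha(T) = \binom{n+1}{2} - \sum_{i=1}^{n+1} i d_i$ — the identity already behind Theorem~\ref{thm3} given the count just found, or checked directly by grouping cane paths by their starting node via Lemma~\ref{alpha}. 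For the enumeration, $\Psi(G)=T$ iff $\Phi(G)$ is a labeling of $T$, so $\sum_{G\in\cC_{n+1},\,\Psi(G)=T} t^{|E(G)|} = \sum_{\Psi(T')=T}\sum_{\Phi(G)=T'} t^{|E(G)|} = \sum_{\Psi(T')=T} t^n(1+t)^{\alpha(T')} = n!\ts\vol\pD_T(t)$ by Theorem~\ref{thm6}. Finally, grouping the triangulation of $\pC_n(t)$ by the value of $\Psi$ and applying $(\ast)$ shows the $\pD_T(t)$ tile $\pC_n(t)$ with disjoint interiors — a polytopal subdivision, by the explicit construction — whence $n!\ts\vol\pC_n(t) = \sum_{G\in\cC_{n+1}} t^{|E(G)|}$. (Alternatively, one can compute $\vol\pD_T(t)$ without Theorem~\ref{thm6}: the block decomposition realizes $\pD_T(t)$ as a product of simplices, the block for $v$ having volume $(1+t)^{k_v j_v+\binom{k_v}{2}} t^{k_v}/k_v!$ after the substitution $y_\ell = x_{i+\ell}/(1+t)^{j+\ell}$; multiplying over $v$ gives the same formula, using $\sum_v k_v = n$ and $\sum_v(k_v j_v + \binom{k_v}{2}) = \sum_v j_v = \alpha(T)$ by Lemma~\ref{alpha}.)

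\textbf{The main obstacle} will be $(\ast)$ itself: identifying the polytope $\pD_T(t)$, cut out by the ``local'' per-sibling chains, with the union of the ``global'' orthoschemes $\pS_{T'}(t)$. This hinges on the compatibility of the two chain structures — inside a sibling group the local chain and the label-sorted global chain coincide, because along $v_k, v_{k-1}, \dots, v_1$ the coordinate exponents increase exactly as the labels decrease — together with the routine observation that coordinate ties and boundary faces form a measure-zero set and cause no trouble. Everything else (the linear-extension count, the constancy of $\alpha$ on labelings of a fixed plane tree, the closed form for $\alpha(T)$, and the volume and enumeration identities) is bookkeeping on top of Theorems~\ref{thm2}, \ref{thm3} and~\ref{thm6}.
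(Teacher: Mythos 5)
Your plan is to derive Theorem~\ref{thm7} from Theorem~\ref{thm6} by grouping the simplices $\pS_{T'}(t)$ according to the plane tree $\Psi(T')$. The key identity $(\ast)$ --- that $\pD_T(t)$ is the union, with pairwise disjoint interiors, of those $\pS_{T'}(t)$ with $\Psi(T') = T$ --- is correct and carefully argued, and your volume and enumeration computations from it (including the nice alternative that realizes $\pD_T(t)$ as a product of blocks, one per internal node, so that the volume factors without appeal to Theorem~\ref{thm6}) match the paper. The paper in fact also uses $(\ast)$ for the first part of Theorem~\ref{thm7}, but states it almost without justification (``$\pD_T$ is composed of $\binom{n}{d_1,d_2,\ldots}$ simplices$\ldots$''), so your explicit $\supseteq$/$\subseteq$ sorting argument fills a real expository gap there.

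The problem is the way you finish the subdivision claim. To show that the $\pD_T(t)$ tile $\pC_n(t)$ you invoke the second part of Theorem~\ref{thm6}, that the $\pS_{T'}(t)$ cover $\pC_n(t)$. But in the paper that covering statement is itself \emph{deduced} from Theorem~\ref{thm7}: Section~\ref{proofs} first proves only the volume half of Theorem~\ref{thm6}, then proves all of Theorem~\ref{thm7} (via the inequality lemma with conditions I1--I3 and the identification of those cells with the $\pD_T(t)$), and only then concludes the covering half of Theorem~\ref{thm6} by refining each $\pD_T(t)$ via $(\ast)$. So as written your argument and the paper's are mutually dependent, and neither covering claim has an independent proof. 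The paper's route is bottom-up: given a generic $x\in\pC_n(t)$, one shows $x$ lies in exactly one cell where each $x_i$ satisfies either $(1+t)^{j_i}\leq x_i\leq (1+t)^{j_i+1}$ or $(1+t)^{j_i}\leq x_i\leq (1+t)x_{i-1}$ with $j_i\leq j_{i-1}+1$, and then exhibits a bijection between such exponent data and plane trees, which identifies the cell as some $\pD_T(t)$. Your top-down grouping would be complete if you supplied such an independent covering argument for the $\pS_{T'}(t)$ --- for instance by reading off $j_i=\lfloor\log_{1+t}x_i\rfloor$ and then the plane tree and labeling from the order of $x_i/(1+t)^{j_i}$ --- but that is essentially the paper's cell decomposition again, so it is simplest to adopt it outright; with that supplied, the rest of your proof goes through and agrees with the paper.
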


Let us give a triangulation of the $t$-Gayley polytope. Take a labeled forest $F$ on $n+1$ nodes. If $v$ has the maximal label in its component and there are $i$ nodes in previous components, choose the coordinate of $v$ to be $t x_i$. In particular, the coordinate of the node with label $n+1$ is $t x_0 = t$. Every other node $v$ has a coordinate of the form $x_i/(1 + t)^j - x_l$, where $i$ is the position of $v$ in NFS, $j$ is the number of cane paths in $F$ starting in $v$, and $l$ is the maximal label in the component of $v$. Denote the coordinate of the node with label $k$ in a forest $F$ by $c(k,F;t)$.

\smallskip

Now arrange the coordinates of the nodes according to the labels. More precisely, define
$$\pS_F(t) = \set{(x_1,\ldots,x_n) \colon 0 \leq c(1,F;t) \leq c(2,F;t) \leq \ldots \leq c(n+1,F;t) = t}.$$
See Example~\ref{exm8}.

\begin{thm} \label{thm8}
 For every labeled forest $F$ on $n+1$ nodes, the set $\pS_F(t)$ is a simplex (but not in general an orthoscheme), and
 $$n!\ts\vol \pS_F(t) \. = \. \sum_{G\in \cG_{n+1}\ts, \ \Phi(G) = F} t^{|E(G)|} \. = \. t^{|E(F)|} (1 + t)^{\alpha(F)}.$$
Furthermore, simplices $\pS_F(t)$ triangulate the $t$-Gayley polytope $\pG_n(t)$. In particular,
 $$n!\ts\vol \pG_n(t) \. = \. \sum_{G\in \cG_{n+1}} t^{|E(G)|} \. = \. (1 + t)^{\binom {n+1} 2}.
 $$
 \end{thm}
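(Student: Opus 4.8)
The plan is to follow the proof of Theorem~\ref{thm4} (the case $t=1$), carrying the variable $t$ along; one could instead first set up a coarser subdivision of $\pG_n(t)$ into products and cones of orthoschemes, mimicking Theorem~\ref{thm5}, but the direct route suffices. I would split the claim into four parts: \textup{(A)} each $\pS_F(t)$ is a simplex; \textup{(B)} $n!\ts\vol\pS_F(t)=t^{|E(F)|}(1+t)^{\alpha(F)}$; \textup{(C)} this equals $\sum_{G:\Phi(G)=F}t^{|E(G)|}$; and \textup{(D)} the $\pS_F(t)$ tile $\pG_n(t)$.

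For \textup{(A)} and \textup{(B)}, observe that among the $n+1$ functionals $c(k,F;t)$, the one with $k=n+1$ is the constant $t$, while the other $n$ are genuine affine functions of $x_1,\dots,x_n$: if node $k$ is the maximal node of its component then $c(k,F;t)=t\ts x_{i_k}$, and otherwise $c(k,F;t)=x_{i_k}/(1+t)^{j_k}-x_{l_k}$, where $i_k$ is the NFS position of $k$ and $l_k<i_k$ is that of its component root. Since the NFS positions of the nodes $k\ne n+1$ are exactly $1,\dots,n$, indexing rows by NFS position makes the linear part of $x\mapsto\bigl(c(k,F;t)\bigr)_{k\ne n+1}$ lower triangular with nonzero diagonal, the diagonal entries being $t$ for the $m-1$ non-maximal component roots and $(1+t)^{-j_v}$ for the other $n+1-m$ nodes, where $m$ is the number of components of $F$. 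Hence this map is an affine isomorphism sending $\pS_F(t)$ onto the scaled order simplex $\{0\le u_1\le\cdots\le u_n\le t\}$, so $\pS_F(t)$ is a simplex (generally not an orthoscheme, as the triangular matrix need not be monomial), and, using Lemma~\ref{alpha} for $\sum_v j_v=\alpha(F)$,
$$
\vol\pS_F(t)\;=\;\frac{t^n/n!}{\,t^{m-1}(1+t)^{-\alpha(F)}\,}\;=\;\frac{t^{\,n+1-m}(1+t)^{\alpha(F)}}{n!}\,;
$$
since $n+1-m=|E(F)|$, this is \textup{(B)}.

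For \textup{(C)} I would invoke Theorem~\ref{thm4}: its proof identifies the graphs $G\in\cG_{n+1}$ with $\Phi(G)=F$ as precisely those obtained from $F$ by adding an arbitrary subset of a fixed family of $\alpha(F)$ ``optional'' edges, namely the edges in bijection with the cane paths of $F$ (an edge joining two components can never be present, as NFS would merge them). Summing $t^{|E(G)|}$ over this Boolean lattice of added edges gives $t^{|E(F)|}(1+t)^{\alpha(F)}$, which matches \textup{(B)}. Should the proof of Theorem~\ref{thm4} not record the Boolean structure explicitly, one re-derives it directly: a non-tree edge may be adjoined to $G$ without disturbing the NFS search forest exactly when it closes a cane path of $F$, and distinct such adjunctions do not interfere.

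Part \textup{(D)} is where I expect the real work, and it proceeds exactly as for Theorem~\ref{thm4} with $2$ replaced by $1+t$ (plus the harmless rescaling of the component-root coordinates by $t$). First, $\pS_F(t)\subseteq\pG_n(t)$: rewriting the chain $0\le c(1,F;t)\le\cdots\le c(n+1,F;t)=t$ in the coordinates $x_1,\dots,x_n$ yields in particular $0\le x_i$ and $x_i\le(1+t)x_{i-1}$. Second, distinct $\pS_F(t)$ have disjoint interiors: for $x$ in the interior of $\pG_n(t)$ the relevant comparisons among the node functionals are strict and determine, step by step, the NFS search forest of the graph represented by $x$, hence the unique $F$ with $x\in\pS_F(t)$. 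Third, $\bigcup_F\pS_F(t)=\pG_n(t)$, which then follows from the previous two points together with the volume identity
$$
\sum_F n!\ts\vol\pS_F(t)\;=\;\sum_F\ \sum_{G:\Phi(G)=F}t^{|E(G)|}\;=\;\sum_{G\in\cG_{n+1}}t^{|E(G)|}\;=\;(1+t)^{\binom{n+1}2},
$$
while $n!\ts\vol\pG_n(t)=(1+t)^{\binom{n+1}2}$ as well, since $\pG_n(t)=\pO\bigl(1+t,(1+t)^2,\dots,(1+t)^n\bigr)$. The technical heart, as in the unweighted case, is the second point --- checking that the comparison cuts producing the $\pS_F(t)$ are exactly the cells carved out by running the NFS --- but every step of that verification is the corresponding step in the proof of Theorem~\ref{thm4}, so nothing genuinely new arises.
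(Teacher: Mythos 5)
Your parts (A)--(C) are correct and essentially the paper's own argument: the volume comes from the triangular affine change of variables $x\mapsto\bigl(c(k,F;t)\bigr)_{k\ne n+1}$, whose determinant is $t^{k(F)-1}(1+t)^{-\alpha(F)}$ (the paper writes down the inverse map, with the same effect), and the identity $\sum_{\Phi(G)=F}t^{|E(G)|}=t^{|E(F)|}(1+t)^{\alpha(F)}$ is exactly the cane-edge characterization, which the paper gets by factoring over components and quoting the tree case proved for Theorem~\ref{thm6}.

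The genuine gap is in part (D), at precisely the point you yourself call the technical heart. Your covering-by-volume-count trick is legitimate, but it still requires (i) $\pS_F(t)\subseteq\pG_n(t)$ and, crucially, (ii) that distinct labeled forests give simplices with disjoint interiors; for (ii) you offer only the assertion that the strict comparisons ``determine the NFS search forest of the graph represented by $x$'' --- but no graph is attached to a point $x$, and no reconstruction procedure is given. Your fallback, that every step is ``the corresponding step in the proof of Theorem~\ref{thm4},'' is circular relative to this paper: Theorem~\ref{thm4} has no independent proof here; it is deduced from Theorem~\ref{thm8} by setting $t=1$. What the paper actually does to supply this content is prove the coarser statements first: the $t$-Cayley polytope is cut into the cells $\pD_T(t)$ by an explicit inequality-splitting lemma (conditions I1--I3, together with a uniqueness lemma matching each cell to a unique plane tree), then $\pG_n(t)$ is decomposed inductively into the products/cones $\pD_F(t)$ indexed by plane forests (second part of Theorem~\ref{thm9}, using Theorem~\ref{thm7} and the cone construction), and finally each $\pD_F(t)$ is refined into the simplices $\pS_{F'}(t)$ by choosing a linear order on the coordinates of siblings (a staircase triangulation); both covering and interior-disjointness fall out of that construction. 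Either this two-level route or an honest direct reconstruction (recover the component breaks, then the exponents $j_i$, then the labels from the order of the functional values at a generic point) must be written out before (D) is proved. A smaller point: the containment (i) is not just ``rewriting the chain.'' For instance, when the $i$-th visited node is the largest-label child of the $(i-1)$-st, the $t$-Cayley argument of Lemma~\ref{lemma1} (which uses that all scaled coordinates lie in $[1,1+t]$) breaks down because a component root now has $x_l\le 1$; one must instead compare against the root's functional $t\ts x_l$, using that the root carries the maximal label of its component, to get $x_i\le(1+t)^{j+1}x_l\le(1+t)x_{i-1}$. The paper omits this verification too, but your one-line justification would not survive being written out.
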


We can also subdivide the $t$-Gayley polytope into a smaller number, $\cat(n+1)$, of polytopes. Recall that for an arbitrary graph $G$ on $n+1$ nodes, we have found the corresponding labeled forest $\Phi(G)$ and deleted the labels to get a plane forest $\Psi(G)$ on $n+1$ nodes. For a plane forest $F$ on $n+1$ nodes with components (plane trees) $T_1,\ldots,T_k$, define
$$\pD_F(t) = \pD_{T_1}(t) \times \cone(\pD_{T_2}(t) \times \cone(\pD_{T_3}(t) \times \cdots)).$$

\begin{prop} \label{prop2}
 Take a plane forest $F$. For a node $w$ that is a root of its component, define coordinate $c(w,F;t) = t x_l$, where $l$ is its position in NFS (equivalently, the components to the left have $l$ nodes total). For a node $v \neq w$ in the same component, define $c(v,F;t) = x_i/(1+t)^j - x_l$, where $i$ is its position in NFS and $j$ is the number of cane paths in $F$ starting in $v$. \.
 For each node with successors $v_1,\ldots,v_k$ (from left to right), take inequalities
 $$0 \leq c(v_1,F;t) \leq \ldots c(v_k,F;t) \leq c(w,F;t).$$
 Furthermore, if $w_1,\ldots,w_m$ are the roots of $F$ (from left to right), take inequalities
 $$0 \leq c(w_m,F;t) \leq \ldots \leq c(w_1,F;t) = t.$$
 The resulting polytope is precisely $\pD_F(t)$.
\end{prop}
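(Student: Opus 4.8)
The plan is to induct on the number $m$ of connected components of $F$, peeling off the leftmost component $T_1$ and reading the operation $\cone(\cdot)$ as the geometric dual of the normalization $x_0 = 1$. Write $Q_F(t)$ for the polytope cut out by the system of inequalities displayed in the statement; the goal is to prove $Q_F(t) = \pD_F(t)$. For the base case $m = 1$, the forest $F$ is a single plane tree $T_1$ and $\pD_F(t) = \pD_{T_1}(t)$, the polytope defined just before Theorem~\ref{thm7}. Here the unique root $w_1$ is the node of NFS--position $0$, with $c(w_1,F;t) = t\ts x_0 = t$, while every other node $v$ has $c(v,F;t) = x_i/(1+t)^j - x_0 = x_i/(1+t)^j - 1$, where $i$ is the NFS--position of $v$ and $j$ the number of cane paths starting at $v$ (Lemma~\ref{alpha}). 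I would then add $1$ to every inequality of $Q_F(t)$: this replaces each $c(v,F;t)$ by $x_i/(1+t)^j$ and $c(w_1,F;t)$ by $1+t$, and converts each chain $0 \le c(v_1,F;t) \le \cdots \le c(v_k,F;t) \le c(w_1,F;t)$ at an internal node into exactly the chain $1 \le \cdots \le x_i/(1+t)^j \le \cdots \le 1+t$ of the definition of $\pD_{T_1}(t)$, once the successors are listed in the order that definition uses. (Pinning down this left--to--right convention on siblings is the only place a choice must be made; reversing it merely reflects each orthoscheme and changes nothing.) Hence $Q_F(t) = \pD_{T_1}(t) = \pD_F(t)$.

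For the inductive step, set $F' := T_2 \cup \cdots \cup T_m$, viewed as a plane forest in its own right on $N' := |T_2| + \cdots + |T_m|$ nodes, so that $\pD_F(t) = \pD_{T_1}(t) \times \cone\bigl(\pD_{F'}(t)\bigr)$ and, by the inductive hypothesis, $\pD_{F'}(t) = Q_{F'}(t)$. I would first record two bookkeeping facts. (i) A node of $F'$ at NFS--position $p'$ sits at position $|T_1| + p'$ in $F$, and likewise for component roots; in particular the root of $T_2$ moves from position $0$ in $F'$ to position $|T_1|$ in $F$. (ii) Every coordinate $c(\cdot,F';t)$ is a linear form of degree one in the variables $x_0',\dots,x_{N'-1}'$ attached to $F'$ (with $x_0' = 1$), so the system defining $Q_{F'}(t)$ is the slice $\{x_0' = 1\}$ of a homogeneous system. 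Next I would unwind the cone: since $\cone(Q_{F'}(t)) = \{(u,x_1',\dots,x_{N'-1}') : 0 \le u \le 1,\ (x_1',\dots,x_{N'-1}') \in u\ts Q_{F'}(t)\}$, each inequality $L \le c$ of $Q_{F'}(t)$ --- whose constant $c$ arises only from the substitution $x_0' = 1$, possibly scaled by the explicit $t$ of a root coordinate --- turns into the homogenized inequality $L \le c\ts u$, i.e.\ exactly what one obtains by restoring $x_0'$ and renaming it $u$. Under the identification $u = x_{|T_1|}$ and $x_{p'}' = x_{|T_1| + p'}$, fact (i) makes each homogenized inequality coincide with the inequality of $Q_F(t)$ attached to the corresponding node of $T_2,\dots,T_m$; in particular the root--chain $0 \le c(w_m',F';t) \le \cdots \le c(w_1',F';t) = t$ of $F'$ becomes $0 \le c(w_m,F;t) \le \cdots \le c(w_2,F;t) = t\ts x_{|T_1|}$, and the residual cone inequality $0 \le u \le 1$ supplies the missing top link $c(w_2,F;t) \le t = c(w_1,F;t)$ of the root--chain of $F$.

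To finish the induction, note that $c(w_1,F;t) = t$ is the same constant here as in the single--tree case, so the base case identifies the factor $\pD_{T_1}(t)$ with the subsystem of $Q_F(t)$ in the variables $x_1,\dots,x_{|T_1|-1}$; and since every inequality of $Q_F(t)$ is either an internal-node chain inside a single component (hence in that component's variables alone) or the root--chain (in the variables $x_{l_j}$ with $l_j \ge |T_1|$), the two subsystems $\{x_1,\dots,x_{|T_1|-1}\}$ and $\{x_{|T_1|},\dots,x_n\}$ are complementary, and forming the product $\pD_{T_1}(t) \times \cone(Q_{F'}(t))$ recovers $Q_F(t)$ in full. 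Thus $\pD_F(t) = Q_F(t)$.

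The hard part will be the middle of the inductive step: verifying honestly that prepending $T_1$ has the single net effect of shifting every NFS--position by $|T_1|$, that the explicit $c(\cdot,F';t)$ formulas transform exactly as claimed under this shift together with the homogenization $x_0' \mapsto u = x_{|T_1|}$, and that no inequality of $Q_F(t)$ mixes the two blocks of variables --- in short, that $\cone(\cdot)$ is the exact dual of the normalization $x_0 = 1$. Dimension counts, the sibling-order convention, and the degenerate cases where some $T_j$ is a single node are routine once this dictionary is in place, and the whole argument runs, \emph{mutatis mutandis}, parallel to the proof of Proposition~\ref{prop1}.
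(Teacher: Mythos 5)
Your argument is correct and is essentially the paper's own: the key step — unwinding $\cone(\cdot)$ by homogenizing the constants coming from $x_0=1$ into multiples of the cone variable, with the residual constraint $0\le u\le 1$ supplying the missing root-chain link — is exactly the computation in the paper's proof, which ``cones'' each inequality of $\pD_{T_p}(t)$ (replacing $x_i$ by $x_i/x_l$, multiplying by $x_l$ and subtracting $x_l$) and observes that coning already-homogeneous inequalities changes nothing. Your packaging as an induction on the number of components, peeling off $T_1$ and using $\pD_F(t)=\pD_{T_1}(t)\times\cone(\pD_{F'}(t))$, is just a reorganization of that same verification.
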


See Example~\ref{exm9}.

\begin{thm} \label{thm9}
  For every plane forest $F$ on $n+1$ nodes, the set $\pD_F(t)$ is a bounded polytope, and
  $$n!\ts\vol \pD_F(t) \. = \!\!\!\!\! \sum_{G\in \cG_{n+1}, \Psi(G) = F} \!\!\!\!\!\! \!t^{|E(G)|}
  = \.
  \frac{\binom n{d_1,d_2,\ldots}} {\prod_{j=2}^m(a_j+\ldots+a_m)}
  t^{\sum_{i=1}^{n+1-m} d_i }(1 + t)^{\binom{n+2-m}2 - \sum_{i=1}^{n+1-m} i d_i}$$
 where $(d_1,\ldots,d_{n+1-m})$ is the reduced degree sequence of~$F$. Furthermore,
 polytopes $\pD_F(t)$ form a subdivision of the $t$-Gayley polytope $\pG_n(t)$.
 In particular,
  $$n!\ts\vol \pG_n(t) \. = \. \sum_{G\in \cG_{n+1}} t^{|E(G)|} \. = \. (1 + t)^{\binom {n+1} 2}.
 $$
 \end{thm}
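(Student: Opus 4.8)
The plan is to derive all parts of the theorem from two earlier results: the triangulation $\{\pS_{F'}(t)\}$ of the $t$-Gayley polytope by labeled forests (Theorem~\ref{thm8}), which carries the enumerative content, and the evaluation of $\vol\pD_T(t)$ for a single plane tree (Theorem~\ref{thm7}), which carries the closed-form content. Throughout, let $T_1,\dots,T_m$ be the components of $F$ and $a_i=|T_i|$, so $a_1+\cdots+a_m=n+1$. I would split the argument into: (a) boundedness of $\pD_F(t)$; (b) the explicit evaluation of $\vol\pD_F(t)$; and (c) the identification of $\pD_F(t)$ with the union of the simplices $\pS_{F'}(t)$ over all labelings $F'$ of $F$, from which the generating-function identity and the subdivision statement both follow.

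For (b) I would use the recursive shape $\pD_F(t)=\pD_{T_1}(t)\times\cone\bigl(\pD_{T_2}(t)\times\cone(\pD_{T_3}(t)\times\cdots)\bigr)$. Since $\vol\cone(\s P)=\vol(\s P)/(\dim\s P+1)$ by Fubini, and at each stage one cones a body of dimension one less than a suffix sum $a_j+\cdots+a_m$, unwinding the nested cones gives $\vol\pD_F(t)=\bigl(\prod_{i=1}^m\vol\pD_{T_i}(t)\bigr)\big/\prod_{j=2}^m(a_j+\cdots+a_m)$. Substituting Theorem~\ref{thm7}: the power of $t$ is $t^{\sum_i(a_i-1)}=t^{e(F)}=t^{\sum_i d_i}$ for the reduced degree sequence $(d_i)$ of $F$; the multinomials $\binom{a_i-1}{d^{(i)}_1,d^{(i)}_2,\dots}=(a_i-1)!/\prod_l d^{(i)}_l!$ (where $d^{(i)}$ is the degree sequence of $T_i$) recombine, after cancelling the $(a_i-1)!$, into $\binom n{d_1,d_2,\dots}$; and the exponent of $1+t$ is $\sum_i\bigl(\binom{a_i}2-\sum_l l\,d^{(i)}_l\bigr)$. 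It then remains to verify $\sum_i\bigl(\binom{a_i}2-\sum_l l\,d^{(i)}_l\bigr)=\binom{n+2-m}2-\sum_{i=1}^{n+1-m}i\,d_i$: the $l$-th entry of $T_i$ lands in position $l+\sum_{i'<i}(a_{i'}-1)$ of the reduced sequence and $\sum_l d^{(i)}_l=a_i-1$, so the left side equals $\sum_i\binom{a_i}2+\sum_{i'<i}(a_i-1)(a_{i'}-1)-\sum_i i\,d_i$, and $\sum_i\binom{a_i}2+\sum_{i'<i}(a_i-1)(a_{i'}-1)=\binom{n+2-m}2$ is a short computation from $\sum_i(a_i-1)=n+1-m$ together with $\binom a2=\binom{a-1}2+(a-1)$. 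Part (a) is immediate from the same recursive description, as products and cones of bounded polytopes (the $\pD_{T_i}(t)$ of Theorem~\ref{thm7}) are bounded.

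For (c) I would start from Proposition~\ref{prop2}, which presents $\pD_F(t)$ by the sibling inequalities $0\le c(v_1,F;t)\le\cdots\le c(v_k,F;t)\le c(w,F;t)$ and the root-chain inequality $0\le c(w_m,F;t)\le\cdots\le c(w_1,F;t)=t$. After the (triangular, hence invertible) affine substitution $(x_1,\dots,x_n)\leftrightarrow(c(v,F;t))_v$, these display $\pD_F(t)$ as an order-polytope type region for a natural partial order $P_F$ on the nodes of $F$; slicing it by totally ordering the $c$-values decomposes it into order simplices, one per linear extension of $P_F$. A linear extension of $P_F$ is precisely a valid labeling $F'$ of $F$ (root $=$ component maximum, decreasing component maxima, increasing siblings), and the key point is that erasing labels changes neither the NFS position of a node, nor the number of cane paths starting at it, nor the NFS position of a component's maximum; hence $c(v,F;t)=c(k,F';t)$ whenever $v$ carries the label $k$ in $F'$, so the order simplex of $F'$ is exactly $\pS_{F'}(t)$. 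Therefore $\pD_F(t)=\bigcup_{F'}\pS_{F'}(t)$ over all labelings $F'$ of $F$, a dissection. Since every labeled forest on $n+1$ nodes has a unique underlying plane forest and $\{\pS_{F'}(t)\}$ triangulates $\pG_n(t)$ by Theorem~\ref{thm8}, the polytopes $\pD_F(t)$ form a subdivision of $\pG_n(t)$; and since $\Psi(G)=F$ exactly when $\Phi(G)$ is a labeling of $F$, summing the identity $n!\ts\vol\pS_{F'}(t)=\sum_{\Phi(G)=F'}t^{|E(G)|}$ of Theorem~\ref{thm8} over those $F'$ yields $n!\ts\vol\pD_F(t)=\sum_{\Psi(G)=F}t^{|E(G)|}$; summing once more over all $F$ recovers $n!\ts\vol\pG_n(t)=\sum_{G\in\cG_{n+1}}t^{|E(G)|}=(1+t)^{\binom{n+1}2}$.

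The main obstacle is part (c): checking that in $c$-coordinates Proposition~\ref{prop2}'s inequalities genuinely cut out the order polytope of $P_F$, that every total order refining $P_F$ comes from an honest labeling of $F$ and that all such labelings arise, and that NFS positions and cane-path counts are unaffected by erasing labels -- in effect re-running, in the present product-and-cone setting, the NFS bookkeeping behind Theorems~\ref{thm6} and~\ref{thm8}, with special care for the component roots, whose coordinates carry the extra factor $t\ts x_l$ and which serve as the apexes of the successive cones. Once Proposition~\ref{prop2} and this identification are secured, parts (a) and (b) are routine.
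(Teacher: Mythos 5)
Your part (b) is essentially the paper's own argument for the volume formula: the nested-cone description plus the elementary fact $\vol\cone(\s P)=\vol(\s P)/(\dim \s P+1)$ gives the factor $\prod_{j=2}^m(a_j+\cdots+a_m)$, and the rest is Theorem~\ref{thm7} applied to each component together with the exponent identity for $1+t$; in fact you supply two details the paper waves at (the recombination of the multinomials into $\binom n{d_1,d_2,\ldots}=n!/\prod_i d_i!$, and the exponent identity for general $m$, which the paper only checks for $m=2$), and your computations there are correct. Likewise, your derivation of the generating-function identity by writing $\pD_F(t)$ as the union of the simplices $\pS_{F'}(t)$ over the labelings $F'$ of $F$ and summing the first part of Theorem~\ref{thm8} is sound and non-circular, since that first part is proved independently; this dissection is exactly the ``staircase'' refinement the paper uses (in the opposite direction) to deduce the second part of Theorem~\ref{thm8}, and your linear-extension/labeling dictionary for it is the right bookkeeping.

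The genuine gap is in the covering step. You obtain ``the $\pD_F(t)$ subdivide $\pG_n(t)$'' from the statement that the simplices $\pS_{F'}(t)$ triangulate $\pG_n(t)$, i.e.\ from the second part of Theorem~\ref{thm8}. But in the paper that statement is itself a corollary of the second part of Theorem~\ref{thm9}: the triangulation of $\pG_n(t)$ is obtained by first proving that the coarse pieces $\pD_F(t)$ cover, and then refining each piece by ordering sibling coordinates. So, as written, your argument for the covering is circular, and nothing in your proposal replaces it: nowhere do you show that an arbitrary point of $\pG_n(t)$ lies in some $\pD_F(t)$ (note that for this you never invoke the subdivision part of Theorem~\ref{thm7}, only its volume formula). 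The missing argument is the induction the paper runs: given $(x_1,\ldots,x_n)\in\pG_n(t)$, let $k$ be the largest index with $x_k\geq 1$ (with the degenerate case $x_1<1$ handled separately); then $(x_1,\ldots,x_k)\in\pC_k(t)$ is placed in some $\pD_{T_1}(t)$ by the subdivision part of Theorem~\ref{thm7}, while $0\leq x_{k+1}\leq 1$ and $(x_{k+2},\ldots,x_n)\in x_{k+1}\ts\pG_{n-k-1}(t)$, so the tail lies in the cone over a lower-dimensional Gayley polytope and induction applies, exactly matching the product-and-cone definition of $\pD_F(t)$. Either add this covering induction, or give an independent proof that the simplices $\pS_{F'}(t)$ cover $\pG_n(t)$; with that supplied, the rest of your plan goes through.
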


\medskip

\section{The Tutte polytope}\label{s:tutte}

Recall that we defined the \emph{Tutte polytope} by inequalities
$$
 q x_i \leq q (1 + t)x_{i-1} - t(1-q)(1-x_{j-1}),
 $$
 where $1 \leq j \leq i \leq n$ and $x_0 = 1$. Here $0 < q \leq 1$ and $t > 0$. We have already established that it specializes to:
\begin{itemize}
 \item the Cayley polytope for $q = 0$, $t = 1$,
 \item the Gayley polytope for $q = 1$, $t = 1$,
 \item the $t$-Cayley polytope for $q = 0$,
 \item the $t$-Gayley polytope for $q = 1$.
\end{itemize}

In this section, we construct a triangulation and a subdivision of this polytope that prove Theorem \ref{t:main}. Recall that in the previous section, we were given a labeled forest $F$ and we attached a coordinate of the form $c(l,F;t) = t x_l$ to every root of the forest (where $x_0 = 1$), and $c(i,F;t) = x_i/(1+t)^j - x_l$ to every non-root node. Now the role of the former will be played by
$$c(l,F;q,t) = t(x_l-1+q)\ts,
$$
 and of the latter by
$$
 c(i,F;q,t) = \frac{qx_i - (1-q)(1-x_l)}{(1+t)^j} - (x_l - 1 + q)\ts.
$$
 Note that $c(i,F;1,t) = c(i,F;t)$ for all $i$.
Define
$$
\pS_F(q,t) = \set{(x_1,\ldots,x_n) \colon 0 \leq c(1,F;q,t) \leq c(2,F;q,t) \leq \ldots \leq c(n+1,F;q,t) = qt}\ts.
$$

\begin{thm} \label{thm10}
 For every labeled forest $F$ on $n+1$ nodes, the set $\pS_F(q,t)$ is a simplex, and
 $$n!\ts\vol \pS_F(q,t) \. = \.
 \sum_{G\in \cG_{n+1}\ts, \ \Phi(G) = F} q^{k(G) - 1} t^{|E(G)|} = q^{k(F) - 1} t^{|E(F)|} (1 + t)^{\alpha(F)}.$$
 Furthermore, simplices $\pS_F(q,t)$ triangulate the Tutte polytope $\pT_n(q,t)$.
 In particular,
 $$n!\ts\vol \pT_n(q,t) \. = \. \sum_{G\in \cG_{n+1}} q^{k(G)-1} t^{|E(G)|}.$$
 In other words,
 $$(\heartsuit) \ \ \
 n!\ts\vol \pT_n(q,t) \. = \. \rZ_{K_{n+1}}(q,t).$$
\end{thm}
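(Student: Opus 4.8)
\medskip

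The plan is to imitate the proof of the $t$-Gayley triangulation, Theorem~\ref{thm8}, carrying the extra weight $q^{k(G)-1}$ through each step. Three things must be established: \textbf{(i)} that $\pS_F(q,t)$ is a simplex of the stated volume; \textbf{(ii)} the enumerative identity $\sum_{\Phi(G)=F}q^{k(G)-1}t^{|E(G)|}=q^{k(F)-1}t^{|E(F)|}(1+t)^{\alpha(F)}$; and \textbf{(iii)} that the simplices $\pS_F(q,t)$ tile $\pT_n(q,t)$. Granting (i)--(iii), the remaining identities are immediate: summing over $F$ and using that $\Phi$ partitions $\cG_{n+1}$ according to the forest it produces gives $n!\ts\vol\pT_n(q,t)=\sum_{G\in\cG_{n+1}}q^{k(G)-1}t^{|E(G)|}$, and this equals $\rZ_{K_{n+1}}(q,t)$ since the spanning subgraphs of $K_{n+1}$ are exactly the labeled graphs on $n+1$ nodes and $k(K_{n+1})=1$; that is $(\heartsuit)$.

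For \textbf{(i)} consider the affine map $\Lambda_F\colon(x_1,\dots,x_n)\mapsto\bigl(c(1,F;q,t),\dots,c(n,F;q,t)\bigr)$; the value $c(n+1,F;q,t)=t(x_0-1+q)=qt$ attached to the top node is constant and plays no role. Re-indexing the nodes by their NFS position, the value at position $i$ is an affine function of $x_i$ alone when $i$ is a component root, and of $x_i$ and $x_l$ otherwise, where $l$ is the position of the root of the component of $i$; since components are traversed in decreasing order of their maximal labels and a component's root is visited first, $l<i$ in the second case, so the linear part of $\Lambda_F$ is lower triangular in the NFS order. Its diagonal entry is $t$ at each of the $k(F)-1$ non-global roots and $q/(1+t)^{j_v}$ at each of the $|E(F)|=n+1-k(F)$ non-roots $v$, so (using $\sum_v j_v=\alpha(F)$, Lemma~\ref{alpha}) one gets $|\det\Lambda_F|=t^{k(F)-1}q^{|E(F)|}(1+t)^{-\alpha(F)}$. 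As $\Lambda_F$ is a bijection, $\pS_F(q,t)=\Lambda_F^{-1}\bigl(\set{0\le y_1\le\cdots\le y_n\le qt}\bigr)$ is the affine image of an order simplex, hence a simplex, with volume $(qt)^n/\bigl(n!\ts|\det\Lambda_F|\bigr)=q^{k(F)-1}t^{|E(F)|}(1+t)^{\alpha(F)}/n!$.

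Part \textbf{(ii)} reduces at once to Theorem~\ref{thm8}: the map $\Phi$ does not involve $q$, so $\set{G\in\cG_{n+1}:\Phi(G)=F}$ is exactly the set appearing there, and every such $G$ has the same components as $F$, whence $k(G)=k(F)$; therefore $\sum_{\Phi(G)=F}q^{k(G)-1}t^{|E(G)|}=q^{k(F)-1}\sum_{\Phi(G)=F}t^{|E(G)|}=q^{k(F)-1}t^{|E(F)|}(1+t)^{\alpha(F)}$, which matches (i).

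The substance is part \textbf{(iii)}, which I expect to be the main obstacle; I would split it into: each $\pS_F(q,t)\subseteq\pT_n(q,t)$; distinct $\pS_F(q,t)$ have disjoint interiors; and the $\pS_F(q,t)$ cover $\pT_n(q,t)$. Containment is a direct substitution: from $0\le c(1,F;q,t)\le\cdots\le c(n+1,F;q,t)=qt$ one derives $x_n\ge 1-q$ and each inequality $(\diamond)$, the latter by recognizing $(\diamond)$ for a pair $j\le i$ as a comparison of the $c$-values of two nodes of $F$ along the NFS; this is the affine $q$-deformation of the containment step for $\pG_n(t)$ and is mechanical. For covering and disjointness I would reconstruct, from a generic $x\in\pT_n(q,t)$, the unique labeled forest $F$ with $x$ in the interior of $\pS_F(q,t)$, by a continuous analogue of the NFS that reveals the nodes in visiting order, exactly as for $\pG_n(t)$; the one genuinely new point is that the coordinate functions now carry the term $(1-q)(1-x_l)$ tying each node to the root of its component, so one must check that the reconstruction still chooses the next node unambiguously, component by component. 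Since at $q=1$ this is precisely Theorem~\ref{thm8} and the $q$-dependence is affine and confined to the current component's root, this is bookkeeping rather than a new idea; alternatively, following the Cayley template of Sections~\ref{triang}--\ref{another}, one can first show that the coarser polytopes $\pD_F(q,t)$ over plane forests $F$ subdivide $\pT_n(q,t)$ via their recursive product-and-cone structure, and then that for each plane forest $F$ the labeled simplices $\pS_{F'}(q,t)$ refining it triangulate $\pD_F(q,t)$.
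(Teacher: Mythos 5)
Your parts~(i) and~(ii) are correct. For~(i) you compute the Jacobian of the map $(x_1,\dots,x_n)\mapsto(c(1,F;q,t),\dots,c(n,F;q,t))$ directly, while the paper instead introduces the affine map $B_F\colon x_{i_k}\mapsto x_{i_k}+(1-q)(1-x_{l_k})$, shows its Jacobian is triangular with determinant $q^{k(F)-1}$, and observes $B_F(\pS_F(t))=\pS_F(q,t)$, thereby importing the already-proved formula for $\vol\pS_F(t)$ from Theorem~\ref{thm8}. The two routes are equivalent; the paper's choice of $B_F$ is not merely a volume device but is reused verbatim in Proposition~\ref{prop3} and in the containment lemma for~(iii), so it is the more economical scaffold. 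Part~(ii), reducing the enumerative identity to Theorem~\ref{thm8} via $k(G)=k(F)$ for $\Phi(G)=F$, is exactly what the paper does.

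Part~(iii) is where your proposal stops short. You correctly identify it as the substance, but your primary suggestion --- a ``continuous analogue of the NFS'' reconstructing $F$ directly from a generic point of $\pT_n(q,t)$, dismissed as ``bookkeeping'' --- is not carried out, and the assertion that affine $q$-dependence confined to each component's root makes the reconstruction unambiguous is precisely the claim that needs a proof, not a gloss. What the paper actually does is the route you mention only as an ``alternatively'': it first proves that the coarser polytopes $\pD_F(q,t)$, built recursively as $\pD_{T_1}(t)\times\cone_q(\pD_{T_2}(t)\times\cone_q(\cdots))$, subdivide $\pT_n(q,t)$ (Proposition~\ref{prop3} and Theorem~\ref{thm12}). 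The covering argument there sets $k$ to be the largest index with $x_k\geq 1$, peels off $(x_1,\dots,x_k)\in\pC_k(t)$, and shows that the rescaled tail $q(x_{k+2},\dots,x_n)-(1-q)(1-x_{k+1})$ lies in $(x_{k+1}-1+q)\,\pT_{n-k-1}(q,t)$, which is exactly the defining condition of $\cone_q$; the containment $\pS_F(q,t)\subseteq\pT_n(q,t)$ is checked by hand case-by-case on consecutive NFS positions. The new ingredient relative to the $t$-Gayley case is the verification that $B_F$ intertwines $\cone$ with $\cone_q$, which is what turns Theorem~\ref{thm9} into Theorem~\ref{thm12} and then Theorem~\ref{thm8} into Theorem~\ref{thm10}. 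None of this is filled in in your proposal, so as written it establishes the volume and enumeration formulas but leaves the triangulation claim --- and hence the final identity $(\heartsuit)$ --- unproved.
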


This is a key result in this paper which implies Main Theorem (Theorem~\ref{thm2}).  The proof
is based on an extension of the previous results for $t$-Cayley and $t$-Gayley polytopes.  Although
the technical details are quite a bit trickier in this case, the structure of the proof follows
the same pattern as before.

\smallskip

For $q > 0$ and $\s P \subset \R^n$, define
$$
\cone_q(\s P) = \set{(x_0,x_1,\ldots,x_n) \colon 1-q \leq x_0 \leq 1,
q(x_1,\ldots,x_n) \in  (x_0-1+q)\s P + (1-q)(1-x_0)}.$$
the cone with apex $(1-q,\ldots,1-q)$ and base $\set 1 \times \s P$.

\smallskip

 For a plane forest $F$ on $n+1$ with components (plane trees) $T_1,T_2,T_3,\ldots$, define
$$D_F(q,t) = \s D_{T_1}(t) \times \cone_q(\s D_{T_2}(t) \times \cone_q(\s D_{T_3}(t) \times \cdots)).$$

\begin{prop} \label{prop3}
 Take a plane forest $F$. For a node $w$ that is a root of its component, define coordinate $c(w,F;q,t) = t (x_l - 1 + q)$, where $l$ is its position in NFS (equivalently, the components to the left have $l$ nodes total). For a node $v \neq w$ in the same component, define
 $$c(v,F;q,t) \. = \. \frac{qx_i - (1-q)(1-x_l)}{(1+t)^j} \. - \. (x_l - 1 + q)\.,
 $$
 where $i$ is its position in NFS and $j$ is the number of cane paths in $F$ starting in $v$. \.
 For each node with successors $v_1,\ldots,v_k$ (from left to right), take inequalities
 $$0 \leq c(v_1,F;q,t) \leq \ldots \leq c(v_k,F;q,t) \leq c(w,F;q,t)\ts.$$
 Furthermore, if $w_1,\ldots,w_m$ are the roots of $F$ (from left to right), take inequalities
 $$0 \leq c(w_m,F;q,t) \leq \ldots \leq c(w_1,F;q,t) = tq\ts.
 $$
 The resulting polytope is precisely $\pD_F(q,t)$.
\end{prop}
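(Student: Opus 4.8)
The plan is to prove the statement by induction on the number $m$ of components of $F$, peeling off the first component; at $q=1$ the argument reduces to (and is modeled on) the proof of Proposition~\ref{prop2} for the $t$-Gayley case. Write $F=T_1\sqcup F'$, where $F'=T_2\sqcup T_3\sqcup\cdots$ consists of the remaining components, and let $s$ be the number of nodes of $T_1$. In the NFS on $F$ the nodes of $T_1$ occupy positions $0,1,\ldots,s-1$ — the node $n+1$, root of $T_1$, being at position $0$ — and the nodes of $F'$ occupy positions $s,\ldots,n$, with the root $w_2$ of $T_2$ at position $s$. Consequently the inequalities in the statement split into two groups: those attached to nodes of $T_1$, which involve only $x_1,\ldots,x_{s-1}$, and those attached to nodes of $F'$ together with the root chain, which involve only $x_s,\ldots,x_n$. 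Since by construction $\pD_F(q,t)=\s D_{T_1}(t)\times\cone_q\bigl(\pD_{F'}(q,t)\bigr)$, it suffices to identify the first group of inequalities with $\s D_{T_1}(t)$ and the second with $\cone_q(\pD_{F'}(q,t))$.

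\emph{The $T_1$-part (and the base case $m=1$).} All nodes of $T_1$ lie in the component whose maximal label sits at position $0$, so the quantity ``$x_l$'' appearing in their coordinates is always $x_0=1$. Substituting $x_l=1$ gives $c(w_1,F;q,t)=t(1-1+q)=tq$ and $c(v,F;q,t)=q\bigl(x_i/(1+t)^j-1\bigr)$ for every non-root node $v$ of $T_1$. As $q>0$, dividing each chain $0\le c(v_1,F;q,t)\le\cdots\le c(v_k,F;q,t)\le tq$ by $q$ and adding $1$ throughout yields exactly $1\le x_{i_1}/(1+t)^{j_1}\le\cdots\le x_{i_k}/(1+t)^{j_k}\le 1+t$, which is the defining system of $\s D_{T_1}(t)$ from Section~\ref{t}. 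This proves the base case and, word for word, the $T_1$-part of the inductive step.

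\emph{The $F'$-part.} It remains to show that the inequalities attached to nodes of $F'$, together with the chain $0\le c(w_m,F;q,t)\le\cdots\le c(w_2,F;q,t)\le c(w_1,F;q,t)=tq$, cut out $\cone_q(\pD_{F'}(q,t))$ in the coordinates $x_s,\ldots,x_n$. The key step is the un-coning substitution: put $p_0=1$ and, for $k\ge 1$,
$$
p_k \, = \, \frac{q\,x_{s+k}-(1-q)(1-x_s)}{x_s-1+q}\ts,
$$
which is consistent at $k=0$. One then verifies, by plugging this into the two case-formulas for the coordinate functions and simplifying, the identity
$$
c(v,F;q,t) \, = \, \frac{x_s-1+q}{q}\, c\bigl(v,F';q,t\bigr)
$$
for every node $v$ of $F'$, where $v$ has NFS position $s+k$ in $F$ and $k$ in $F'$, its cane-path count $j$ is the same in $F$ as in $F'$ (cane paths never leave a component), and the right-hand side is the coordinate function for $F'$ evaluated in the variables $p_0,p_1,\ldots$. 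In particular $c(w_2,F;q,t)=t(x_s-1+q)$, while $c(w_1,F;q,t)=tq$ is constant. On the region $c(w_2,F;q,t)\ge 0$ one has $x_s\ge 1-q$, so the factor $(x_s-1+q)/q$ is nonnegative; dividing every $F'$-inequality by it turns the $F'$-part of our system into the system describing $\pD_{F'}(q,t)$ in the variables $p_1,p_2,\ldots$ (Proposition~\ref{prop3} for $F'$, i.e.\ the inductive hypothesis), while the two topmost inequalities of the root chain become precisely $1-q\le x_s\le 1$. By the definition of $\cone_q$ this is exactly the condition $(x_s,\ldots,x_n)\in\cone_q(\pD_{F'}(q,t))$; the degenerate locus $x_s=1-q$, where the substitution breaks down, is dealt with directly by observing that both descriptions then force $(x_s,\ldots,x_n)=(1-q,\ldots,1-q)$, the apex of the cone. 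Together with the $T_1$-part this gives $\pD_F(q,t)=\s D_{T_1}(t)\times\cone_q(\pD_{F'}(q,t))$, completing the induction.

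\emph{Main obstacle.} The crux of the argument is the displayed identity $c(v,F;q,t)=\tfrac{x_s-1+q}{q}\,c(v,F';q,t)$. Proving it requires careful bookkeeping of the correspondence between NFS positions in $F$ and in $F'$ after deleting $T_1$, the observation that deleting a component changes no cane-path count, and then pushing the algebra through in the non-root case, where the presence of the term $(1-q)(1-x_l)$ with $x_l$ itself a nontrivial coordinate makes the cancellations least transparent. A secondary point that needs separate handling is the cone apex $x_s=1-q$, at which the substitution degenerates.
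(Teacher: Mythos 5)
Your proof is correct, but it takes a genuinely different route from the paper's. The paper does not re-derive the inequality description by induction: it reuses the affine map $B_F\colon x_{i_k}\mapsto x_{i_k}+(1-q)(1-x_{l_k})$ (and $x_{i_k}\mapsto qx_{i_k}+1-q$ for roots) introduced in the proof of the first part of Theorem~\ref{thm10}, under which the coordinate functions $c(\cdot,F;q,t)$ pull back to $q$ times the functions $c(\cdot,F;t)$; hence the polytope cut out by the system in Proposition~\ref{prop3} is $B_F(\pD_F(t))$ by the already-proved $t$-Gayley statement (Proposition~\ref{prop2}), and the only remaining work is a direct check that $B_F$ turns the nested $\cone$-description of $\pD_F(t)$ into the nested $\cone_q$-description, i.e.\ that applying $B_F$ to the ``coned'' terms $x_i/\bigl((1+t)^j x_l\bigr)$ yields $\bigl(qx_i-(1-q)(1-x_l)\bigr)/\bigl((1+t)^j(x_l-1+q)\bigr)$, exactly the effect of $\cone_q$. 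You instead prove the statement from scratch by induction on the number of components, peeling off $T_1$ and un-coning via $p_k=\bigl(qx_{s+k}-(1-q)(1-x_s)\bigr)/(x_s-1+q)$ together with the scaling identity $c(v,F;q,t)=\frac{x_s-1+q}{q}\,c(v,F';q,t)$, which does hold in both the root and non-root cases (the key cancellations are $1-p_{l-s}=q(x_s-x_l)/(x_s-1+q)$ and $p_{l-s}-1+q=q(x_l-1+q)/(x_s-1+q)$), and your identification of the $T_1$-part and the treatment of the apex $x_s=1-q$ are also sound. The algebraic core is essentially the same rational simplification, but your argument is self-contained (it uses neither $B_F$ nor Proposition~\ref{prop2}, only the defining chains of $\s D_{T_1}(t)$), at the cost of redoing the NFS-position and cane-path bookkeeping and of handling the degenerate cone apex explicitly, a case the paper's change-of-variables argument sidesteps; the paper's proof is shorter precisely because it transports a previously established description through one global affine transformation.
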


This proposition is used to prove the following result of independent interest,
a theorem which is in turn used to derive Theorem~\ref{thm10} in Section~\ref{proofs}.

\begin{thm} \label{thm12}
For every plane forest $F$ on $n+1$ nodes, the set $\pD_F(q,t)$ is a bounded polytope, and
$$
n!\ts\vol \pD_F(q,t) \. = \. \sum_{G\in \cG_{n+1}\ts, \ \Psi(G) = F} q^{k(G)-1} t^{|E(G)|} \. =
$$
$$= \. \frac{\binom n{d_1,d_2,\ldots}}{\prod_{j=2}^m(a_j+\ldots+a_m)} \, q^{k(F)-1} t^{\sum_{i=1}^{n+1-m} d_i }(1 + t)^{\binom{n+2-m}2 \ts -\ts
\sum_{i=1}^{n+1-m} \ts i \ts d_i}\.,
$$
where $(d_1,\ldots,d_{n+1-m})$ is the reduced degree sequence of $F$.  Furthermore,
polytopes $\pD_F(q,t)$ form a subdivision of the Tutte polytope $\pT_n(q,t)$. In particular,
$$
  n!\ts\vol \s T_n(t) \. = \. \sum_{G\in \cG_{n+1}} q^{k(G)-1} t^{|E(G)|}
  \. = \. \rZ_{K_{n+1}}(q,t)\ts.
$$
 \end{thm}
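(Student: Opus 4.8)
The plan is to treat the two assertions separately: the per-piece volume formula will be reduced to its $q=1$ special case, Theorem~\ref{thm9} (the $t$-Gayley statement), by computing how the operation $\cone_q$ scales volume, while the subdivision of $\pT_n(q,t)$ will be proved by induction on $n$ using the recursive product/cone description of the $\pD_F(q,t)$ together with Proposition~\ref{prop3}.

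For the volume I would first record the elementary lemma that for $q>0$ and any bounded $\s P\subset\R^m$ the set $\cone_q(\s P)\subset\R^{m+1}$ is bounded with $\vol\cone_q(\s P)=\tfrac{q}{m+1}\vol\s P$. This follows by slicing at a fixed value $x_0\in[1-q,1]$ of the extra coordinate: the slice is the affine image $\tfrac{x_0-1+q}{q}\s P+\tfrac{(1-q)(1-x_0)}{q}$, of $m$-volume $\bigl(\tfrac{x_0-1+q}{q}\bigr)^{m}\vol\s P$, and integrating in $x_0$ after the substitution $s=\tfrac{x_0-1+q}{q}\in[0,1]$ yields the constant $\tfrac{q}{m+1}$ (for $q=1$ this is the usual $\vol\cone(\s P)=\vol\s P/(m+1)$). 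Writing $F=T_1\sqcup F'$ with $F'=T_2\sqcup\cdots\sqcup T_m$, so that $\pD_F(q,t)=\pD_{T_1}(t)\times\cone_q(\pD_{F'}(q,t))$ by definition, an induction on the number $m=k(F)$ of components (using $\vol(\s P\times\s Q)=\vol\s P\cdot\vol\s Q$, Theorem~\ref{thm7}, and the lemma) shows that $\pD_F(q,t)$ is a bounded polytope and that $\vol\pD_F(q,t)=q^{\,k(F)-1}\vol\pD_F(t)$, since each of the $m-1$ cone operations contributes exactly one factor $q$ relative to the case $\pD_F(t)=\pD_F(1,t)$. Now the neighbors-first search tree $\Phi(G)$, and hence the plane forest $\Psi(G)$, has the same connected components as $G$, so $k(G)=k(\Psi(G))$; combining this with Theorem~\ref{thm9} gives $n!\,\vol\pD_F(q,t)=q^{k(F)-1}\sum_{G:\Psi(G)=F}t^{|E(G)|}=\sum_{G:\Psi(G)=F}q^{k(G)-1}t^{|E(G)|}$, and multiplying the explicit evaluation in Theorem~\ref{thm9} by $q^{k(F)-1}$ produces the stated closed form.

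For the subdivision I would induct on $n$, grouping the plane forests on $n+1$ nodes by the number $a$ of nodes in their first (leftmost, maximal-label) component $T_1$, which gives a bijection with pairs consisting of a plane tree on $a$ nodes and a plane forest on $n+1-a$ nodes. Since $\cone_q$ and the Cartesian product both distribute over finite unions and preserve disjointness of interiors, the definition of $\pD_F(q,t)$, Theorem~\ref{thm7} (the $\pD_{T_1}(t)$ subdivide $\pC_{a-1}(t)$), and the inductive hypothesis (the $\pD_{F'}(q,t)$ subdivide $\pT_{n-a}(q,t)$) give $\bigcup_{|T_1|=a}\pD_F(q,t)=\pC_{a-1}(t)\times\cone_q(\pT_{n-a}(q,t))$, a union with pairwise disjoint interiors. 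It then remains to prove the single geometric identity that $\pT_n(q,t)$ is the union, with pairwise disjoint interiors, of the sets $\pC_{a-1}(t)\times\cone_q(\pT_{n-a}(q,t))$ over $a=1,\dots,n+1$ (the $a=n+1$ term being just $\pC_n(t)$, with no cone), where in the $a$-th set $x_a$ plays the role of the cone coordinate and $(x_{a+1},\dots,x_n)$ that of the base. Concretely, given $\bx\in\pT_n(q,t)$ one takes $a$ to be the least index with $x_a\le 1$ (and $a=n+1$ if all $x_i>1$): one checks $(x_1,\dots,x_{a-1})\in\pC_{a-1}(t)$, which is immediate since the instances of $(\diamond)$ with $j=1$ read $x_i\le(1+t)x_{i-1}$ and the remaining instances become vacuous once $x_0,\dots,x_{a-2}\ge 1$; one checks $x_a\in[1-q,1]$ using the remaining inequalities; and one checks that the rescaled tail $z_i=\tfrac{qx_{a+i}-(1-q)(1-x_a)}{x_a-1+q}$ satisfies precisely the defining inequalities of $\pT_{n-a}(q,t)$, which is a direct substitution into $(\diamond)$, Proposition~\ref{prop3} supplying the inequality description needed to match the pieces. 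Telescoping the induction then shows the $\pD_F(q,t)$ tile $\pT_n(q,t)$.

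Finally, summing volumes over all $\cat(n+1)$ plane forests and using that every $G\in\cG_{n+1}$ has a unique $\Psi(G)$, we obtain $n!\,\vol\pT_n(q,t)=\sum_F n!\,\vol\pD_F(q,t)=\sum_{G\in\cG_{n+1}}q^{k(G)-1}t^{|E(G)|}=\rZ_{K_{n+1}}(q,t)$, the last equality being the definition of $\rZ$ (the spanning subgraphs of $K_{n+1}$ are exactly the labeled graphs on $n+1$ nodes, and $k(K_{n+1})=1$). I expect the main obstacle to be the geometric identity for $\pT_n(q,t)$ in the previous paragraph: because $\cone_q$ is a genuine cone rather than a linear one, one must verify that it reproduces exactly those instances of $(\diamond)$ that couple a coordinate to an earlier one lying in a previously explored component, and this bookkeeping — rather than the volume computation, which is essentially mechanical given Theorem~\ref{thm9} — is where the technical weight of the argument lies.
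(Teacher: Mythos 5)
Your argument is correct and broadly follows the paper's plan (reduce to the $q=1$ case established in Theorem~\ref{thm9}, then show the $\pD_F(q,t)$ tile by induction on the number of components), but the two halves differ from the paper in the details of execution.

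For the volume, the paper does not slice $\cone_q$. Instead it reuses the affine map $B_F\colon x_{i_k}\mapsto x_{i_k}+(1-q)(1-x_{l_k})$ from the proof of the first part of Theorem~\ref{thm10} and of Proposition~\ref{prop3}: this map carries $\pD_F(t)$ to $\pD_F(q,t)$, is upper triangular in the standard basis with diagonal entries $q$ exactly at the $k(F)-1$ non-initial root coordinates and $1$ elsewhere, hence scales volume by $q^{k(F)-1}$. Your route — computing $\vol\cone_q(\s P)=\tfrac{q}{m+1}\vol\s P$ directly by Fubini with the substitution $s=(x_0-1+q)/q$, and then telescoping the product/cone structure of $\pD_F(q,t)$ — gives the same factor $q^{m-1}/\prod_{j\ge 2}(a_j+\cdots+a_m)$ and is perfectly sound; it is arguably more self-contained, whereas the paper's $B_F$ buys a single transformation that simultaneously handles $\pS_F$, $\pD_F$, and (later) the vertex computation in Proposition~\ref{vertqsx}. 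For the subdivision, the underlying mechanism is the same — split $\bx\in\pT_n(q,t)$ into an initial Cayley block, a cone coordinate in $[1-q,1]$, and a rescaled tail, then invoke Theorem~\ref{thm7} and induction — but the paper chooses the cut at $k=\max\{i:x_i\ge1\}$, whereas you choose $a=\min\{i:x_i\le 1\}$. These can genuinely disagree at boundary points (for instance $(1+t,1,1+t)\in\pT_3(q,t)$ has $a=2$ but $k=3$), though this only affects a measure-zero set and either convention yields a valid subdivision. One small step you wave at — that $x_a\in[1-q,1]$ (or, equivalently, $x_i\ge 1-q$ for all $i$) — does deserve a sentence: taking $j=i+1$ in $(\diamond)$ and rearranging gives $x_i\ge\bigl(qx_{i+1}+t(1-q)\bigr)/(q+t)$, so $x_{i+1}\ge 1-q$ implies $x_i\ge 1-q$, and a backward induction from the explicit constraint $x_n\ge 1-q$ settles it. The remaining bookkeeping you correctly identify as the crux (checking that the rescaled tail satisfies exactly the instances of $(\diamond)$ for $\pT_{n-a}(q,t)$) is carried out in the paper by a direct substitution computation, as you anticipate.
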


%



\medskip

\section{Vertices} \label{vertices}

The inequalities defining the Tutte polytope, as well as the simplices in the triangulation, are quite complicated compared to the ones for $t$-Cayley and $t$-Gayley polytopes. In this section, we see that the vertices of all the polytopes involved are very simple.

\smallskip

The following propositions give the vertices of the simplices $\pS_F(t)$ for $F$ a labeled forest, and of the $t$-Cayley polytope.

\begin{prop} \label{vertsx}
 Pick $t > 0$ and a labeled forest $F$ on $n+1$ nodes. The set of vertices of the simplex $\pS_F(t)$ is the set $V(F;t) = \set{v_p(F;t), 1 \leq p \leq n+1}$, where $v_p(F;t) = (x_1,\ldots,x_n)$ satisfies the following:
 \begin{enumerate}
  \item if the node $v$ is the $l$-th visited and its label $r$ is maximal in its component, then
  $$x_l = \left\{ \begin{array}{ccl} 1 & : & p \leq r \\ 0 & : & p > r \end{array} \right.$$
  \item if the node $v$ is the $i$-th visited and its label $k$ is not $r$, the maximal label in its component, then
  $$x_i = \left\{ \begin{array}{ccl} (1+t)^{j+1} & : & p \leq k \\ (1+t)^j & : & k < p \leq r \\ 0 & : & p > r \end{array}\right.$$
  where $j$ is the number of cane paths in $F$ starting in $v$.
 \end{enumerate}
\end{prop}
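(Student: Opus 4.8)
The plan is to recognize $\pS_F(t)$ as an affine image of the ``staircase'' simplex of nondecreasing sequences in $[0,t]^n$, identify the vertices there, and pull them back. Recall that $\pS_F(t)\ssu\R^n$ is cut out by the chain $0\le c(1,F;t)\le c(2,F;t)\le\cdots\le c(n+1,F;t)$, in which $c(n+1,F;t)$ is identically $t$ (the node $n+1$ is the root of its component, visited $0$-th, so $c(n+1,F;t)=t\ts x_0=t$). First I would prove that the affine map $\phi$ sending $(x_1,\dots,x_n)$ to the list of node coordinates $\bigl(c(1,F;t),\dots,c(n,F;t)\bigr)$, taken in order of label, is a bijection of $\R^n$. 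Up to a permutation of the target coordinates, $\phi$ sends the variable $x_i$ (attached to the node at NFS position $i$) to the coordinate of the node visited $i$-th, which is $t\ts x_i$ if that node carries the maximal label in its component, and $x_i/(1+t)^{j}-x_l$ otherwise, where $l$ is the NFS position of the root of that component. Since NFS treats the components in order and visits the maximal-label node of a component before any other node of it, one always has $l<i$ in the second case; hence $\phi$ is, in NFS order, triangular with nonzero diagonal entries $t$ or $(1+t)^{-j}$, and therefore invertible.

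Under $\phi$ the polytope $\pS_F(t)$ maps onto $\Delta:=\set{(c_1,\dots,c_n)\colon 0\le c_1\le\cdots\le c_n\le t}$, which is a simplex with $n+1$ vertices: for $p=1,\dots,n+1$ let $w_p$ be the monotone $0/t$-vector with $c_k=0$ for $k<p$ and $c_k=t$ for $k\ge p$ (writing $c_0:=0$, $c_{n+1}:=t$, this is the unique vertex at which only the inequality $c_{p-1}\le c_p$ is slack). As $\phi$ is an affine isomorphism, the vertices of $\pS_F(t)$ are precisely the $n+1$ distinct points $\phi^{-1}(w_p)$; this also re-derives that $\pS_F(t)$ is a simplex (part of Theorem~\ref{thm8}).

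It then remains to solve $\phi(x)=w_p$ node by node and compare with the two displayed formulas. If $v$ is the $l$-th visited node with maximal label $r$ in its component, then $t\ts x_l=c(v,F;t)$ equals $t$ when $r\ge p$ and $0$ when $r<p$, so $x_l=1$ or $x_l=0$ accordingly --- this is case~(1). If $v$ is the $i$-th visited node, its label $k$ is not the maximal label $r$ of its component (attained at the $l$-th visited node), and $j$ cane paths start at $v$, then $x_i/(1+t)^{j}-x_l=c(v,F;t)$ with $x_l$ already determined; there are three subcases. If $p\le k$ then $c(v,F;t)=t$, and since $k<r$ forces $r\ge p$ we get $x_l=1$, hence $x_i=(1+t)^{j+1}$. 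If $k<p\le r$ then $c(v,F;t)=0$ and $x_l=1$, hence $x_i=(1+t)^{j}$. If $p>r$ then $k<r<p$ gives $c(v,F;t)=0$ and $x_l=0$, hence $x_i=0$. These are exactly the alternatives in case~(2), so $\phi^{-1}(w_p)=v_p(F;t)$, as desired.

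\textbf{Main obstacle.} The only step not reducible to substitution is the triangularity of $\phi$ in the first paragraph, and that rests solely on the elementary property of NFS that within each component the maximal-label node is visited first. In the bookkeeping of the last paragraph, the one point to watch is the implication ``$p\le k$ and $k<r$ imply $r\ge p$'' used to pin down $x_l$ in the first subcase.
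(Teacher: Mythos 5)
Your proposal is correct and follows essentially the same approach as the paper. The paper's proof is two sentences: it asserts that $v_p(F;t)$ is the unique solution of the system $c(1,F;t)=\cdots=c(p-1,F;t)=0$, $c(p,F;t)=\cdots=c(n,F;t)=t$, and then says ``it is easy to check that the solution agrees with the statement.'' What you supply are precisely the two pieces the paper glosses over: a reason why this system has a unique solution (the map $x\mapsto (c(1,F;t),\dots,c(n,F;t))$ is affine and triangular in NFS order, because the root of each component has a smaller NFS index than the other nodes of that component), and the back-substitution checking that the solution matches the two displayed cases. The bookkeeping in your last paragraph is right, including the observation that the case $p>r$ only arises when $r<n+1$, so $x_l$ is genuinely determined by $c(r,F;t)=0$.
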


\begin{prop} \label{vertcayley}
 For $t > 0$, the set of vertices of $\pC_n(t)$ is the set
 $$
 V_n(t) = \bigl\{(x_1,\ldots,x_n) \colon \. x_1 \in \{1,1 + t\}, \, x_i \in \{1,(1 + t)\ts x_{i-1}\} \ \, \text{for} \ \. i = 2,\ldots,n\bigr\}\ts.
 $$
\end{prop}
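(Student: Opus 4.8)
The plan is to use the standard criterion that a point $p$ of a polytope $\bP = \{x\in\rr^n : \langle a_k,x\rangle \le b_k\}$ is a vertex exactly when the normals $a_k$ of the inequalities tight at $p$ span $\rr^n$ (equivalently, $p$ is \emph{not} a vertex iff there is a nonzero $d$ with $p\pm\varepsilon d\in\bP$ for all small $\varepsilon>0$). The polytope $\pC_n(t)\subset\rr^n$ is cut out by the $2n$ inequalities $\mathrm L_i\colon x_i\ge 1$ and $\mathrm U_i\colon x_i\le(1+t)\,x_{i-1}$ for $i=1,\dots,n$, with the convention $x_0=1$; their normal vectors are $e_i$ and $e_i-(1+t)\,e_{i-1}$ (with $e_0=0$). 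I would prove the two inclusions between $V_n(t)$ and the vertex set of $\pC_n(t)$ separately.

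\textbf{Points of $V_n(t)$ are vertices.} A one-line induction on $i$ shows every $x\in V_n(t)$ has $x_i\ge 1$ (as $x_i$ is either $1$ or $(1+t)\,x_{i-1}\ge 1+t$), so all the $\mathrm L_i$ hold; and since $x_i\in\{1,(1+t)x_{i-1}\}$ with $(1+t)x_{i-1}\ge 1+t>1$, all the $\mathrm U_i$ hold too, hence $x\in\pC_n(t)$. At such a point, for each $i$ exactly one of $\mathrm L_i,\mathrm U_i$ is tight (the equality $1=(1+t)x_{i-1}$ is impossible since $t>0$ and $x_{i-1}\ge 1$). Ordering these $n$ tight normals by $i$ gives a lower‑triangular matrix with $1$'s on the diagonal, so they are independent and span $\rr^n$; thus $x$ is a vertex.

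\textbf{Every vertex lies in $V_n(t)$.} Suppose $p=(p_1,\dots,p_n)\in\pC_n(t)$ is not in $V_n(t)$; then there is an index $m$ with $1<p_m<(1+t)p_{m-1}$, so neither $\mathrm L_m$ nor $\mathrm U_m$ is tight at $p$. Define $d\in\rr^n$ by $d_i=0$ for $i<m$, $d_m=1$, and, for $i>m$, set $d_i=(1+t)d_{i-1}$ if $\mathrm U_i$ is tight at $p$ and $d_i=0$ otherwise. I claim $p\pm\varepsilon d\in\pC_n(t)$ for all sufficiently small $\varepsilon>0$, which places $p$ in the relative interior of a segment of $\pC_n(t)$ and contradicts vertexhood. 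Indeed, an inequality of index $<m$ involves only coordinates with index $<m$, where $d$ vanishes, so it is unaffected; $\mathrm L_m$ and $\mathrm U_m$ have strict slack at $p$ and $d_{m-1}=0$, so they stay satisfied; if $i>m$ and $\mathrm U_i$ is tight, then perturbing $x_{i-1}$ by $\varepsilon d_{i-1}$ and $x_i$ by $\varepsilon d_i$ rescales both sides of $\mathrm U_i$ equally, keeping it an equality, while $p_i=(1+t)p_{i-1}\ge 1+t>1$ gives strict slack in $\mathrm L_i$; and if $i>m$ and $\mathrm U_i$ is slack, then $d_i=0$ leaves $\mathrm L_i$ untouched, and $\mathrm U_i$ remains satisfied for small $\varepsilon$ by continuity. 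Since $d\ne 0$, the claim follows, and together with the previous part this gives that the vertex set of $\pC_n(t)$ equals $V_n(t)$.

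A clean alternative is an induction on $n$: writing $\pC_n(t)=\{(y,x_n) : y\in\pC_{n-1}(t),\ 1\le x_n\le(1+t)y_{n-1}\}$ and noting $(1+t)y_{n-1}\ge 1+t>1$ on $\pC_{n-1}(t)$, this region lies between the graphs of two strictly ordered affine functions over the base polytope, and a short lemma identifies its vertices with the two lifts of each vertex of $\pC_{n-1}(t)$ — precisely the recursion built into $V_n(t)$. The only step requiring genuine care is the construction of the direction $d$ in Step~2 and the verification that it preserves the ``downstream'' inequalities $\mathrm U_i$ with $i>m$; the remaining checks are routine bookkeeping.
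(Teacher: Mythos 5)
Your proof is correct and follows the same underlying idea as the paper's proof---identifying vertices by asking which of the inequalities $1\le x_i$ and $x_i\le(1+t)x_{i-1}$ are tight---but it is considerably more careful. The paper's proof is terse: it simply asserts that at a vertex one of the two inequalities indexed by each $i$ must be tight, deduces that every vertex lies in $V_n(t)$, and declares the proof complete, without verifying the converse inclusion (that every point of $V_n(t)$ is in fact a vertex) and without justifying the transversality assertion itself. You supply both: the lower-triangular-matrix argument showing the $n$ tight normals at a point of $V_n(t)$ are linearly independent establishes the forward inclusion, and the explicit perturbation direction $d$ (propagating through tight upper inequalities with $d_i=(1+t)d_{i-1}$ and killed at slack ones) shows that any point with both inequalities slack at some index $m$ lies in the relative interior of a segment and hence is not a vertex. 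Your argument is thus a fully rigorous version of the one the paper sketches.
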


Examples~\ref{exm11} and~\ref{exm10} illustrate these propositions.
For a labeled forest $F$, let $V(F;q,t)$ be the set of points that we get if we replace the (trailing) $0$'s in the coordinates of the points in $V(F;t)$ by $1-q$ (see Example~\ref{exm13}).

\begin{prop} \label{vertqsx}
 For a labeled forest $F$ and $t > 0$, $0 < q \leq 1$, $V(F;q,t)$ is the set of vertices of $\pS_F(q,t)$.
\end{prop}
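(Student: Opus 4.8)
\textbf{Proof proposal for Proposition~\ref{vertqsx}.}

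The plan is to leverage Proposition~\ref{vertsx}, which already identifies the vertices of $\pS_F(t)$, together with the observation that $\pS_F(q,t)$ is obtained from $\pS_F(t)$ by an explicit affine substitution. Concretely, both simplices are cut out by a chain of inequalities $0 \leq c(1,\cdot) \leq \cdots \leq c(n+1,\cdot) = (\text{const})$ in the coordinate functions $c(k,F;t)$ and $c(k,F;q,t)$ respectively. The key elementary fact is that the map $c(k,F;t) \mapsto c(k,F;q,t)$ is realized by a single invertible affine transformation $A_{q,t}\colon \R^n \to \R^n$ on the $x$-coordinates: comparing the definitions
$$
c(l,F;q,t) = t(x_l - 1 + q), \qquad c(l,F;t) = t x_l
$$
for roots, and
$$
c(i,F;q,t) = \frac{qx_i - (1-q)(1-x_l)}{(1+t)^j} - (x_l - 1 + q), \qquad
c(i,F;t) = \frac{x_i}{(1+t)^j} - x_l
$$
for non-roots, one checks that substituting $x_l \mapsto x_l - 1 + q$ for a root coordinate and $x_i \mapsto q x_i + (1-q)$ — i.e. $x_i \mapsto 1 - q(1 - x_i)$ — for a non-root coordinate carries $c(k,F;t)$ to $c(k,F;q,t)$ for every node $k$ simultaneously. (One should verify this node-by-node using $x_l \mapsto x_l - 1 + q$ inside the non-root formula: $qx_i - (1-q)(1-x_l)$ becomes exactly $x_i' $ with $x_i' = qx_i + (1-q)$ after the root shift is absorbed, so the two formulas match.) Since this substitution is affine and invertible (for $q > 0$), it maps the simplex $\pS_F(t)$ bijectively onto $\pS_F(q,t)$, hence maps vertices to vertices.

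First I would make the substitution precise and record it as a lemma: define $\Theta_{q,t}(x_1,\ldots,x_n)$ by applying $x \mapsto x - 1 + q$ to each coordinate indexed by a node that is maximal in its component, and $x \mapsto q x + 1 - q$ to every other coordinate; then show $c(k,F;q,t)\big(\Theta_{q,t}(x)\big) = c(k,F;t)(x)$ for all $k$. This is a routine but necessary computation, done once for roots and once for non-roots. Second, I would conclude that $\pS_F(q,t) = \Theta_{q,t}^{-1}(\pS_F(t))$, so its vertex set is $\Theta_{q,t}^{-1}(V(F;t))$. Third, I would compute $\Theta_{q,t}^{-1}$ on the explicit vertices $v_p(F;t)$ from Proposition~\ref{vertsx}: the inverse sends a root-coordinate value $x$ to $x + 1 - q$ and a non-root value $x$ to $(x - 1 + q)/q$. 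Applied to $v_p(F;t)$, a root coordinate equal to $1$ stays $1$ and one equal to $0$ becomes $1-q$; a non-root coordinate equal to $(1+t)^{j+1}$ or $(1+t)^j$ is unchanged (since those satisfy $x \geq 1$, and $\Theta_{q,t}^{-1}$ fixes... wait — it does not fix them, but the claim of the proposition is precisely that only the \emph{trailing zeros} change), while a non-root coordinate equal to $0$ becomes $(0 - 1 + q)/q = -(1-q)/q$. This last point requires care, and is where the stated description — "replace trailing $0$'s by $1-q$" — must be reconciled with the algebra.

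The main obstacle, then, is exactly this reconciliation for the non-root zero coordinates. The resolution is structural: in $v_p(F;t)$, whenever a non-root coordinate $x_i$ equals $0$ (case $p > r$, the node's component has all labels $< p$), then by the NFS/forest structure the root $x_l$ of that same component also equals $0$, and in fact the entire "tail" of coordinates belonging to components with maximal label $< p$ is zero. One must check that for such a vertex the affine map $\Theta_{q,t}$ and its inverse act on that whole tail consistently so that the resulting point lands on the face $x_k = 1-q$ for all those $k$ — i.e. that the correct inverse image is the point with $1-q$ in every trailing slot, not $-(1-q)/q$. This follows because the coordinates in a dead component are not independent: the chain of inequalities $c(v_1,F;q,t) \leq \cdots \leq c(w,F;q,t)$ forces, once the root coordinate is set to its minimal feasible value $1-q$, the child coordinates to their corresponding minimal values, which one computes directly to be $1-q$ as well (a downward induction through each dead plane tree, using the defining inequalities of $\pD$-type blocks from Proposition~\ref{prop3}). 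So I would phrase the final step as: identify each $v_p(F;q,t)$ as the unique point of $\pS_F(q,t)$ satisfying the appropriate subset of the chain inequalities with equality, then check it coincides with the point obtained from $v_p(F;t)$ by the trailing-zero replacement, using the component-structure observation to handle the non-root zeros. Everything else is bookkeeping.
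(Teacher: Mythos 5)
Your overall plan coincides with the paper's: the proof of the proposition is a one-liner once one has an explicit affine bijection carrying $\pS_F(t)$ onto $\pS_F(q,t)$, because such a map pushes the known vertex set $V(F;t)$ onto the vertex set of $\pS_F(q,t)$. The paper does exactly this, recalling the map $B_F$ already constructed in the proof of Theorem~\ref{thm10}.

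The gap is that the affine map you propose is not the right one, and the mismatch is structural rather than a sign error. Your $\Theta_{q,t}$ acts \emph{coordinate-wise}: each $x_i$ is transformed by a rule that depends only on whether $i$ indexes a root, not on the value of any other coordinate. But the correct map
$$
B_F \colon x_{i_k} \longmapsto x_{i_k} + (1-q)(1-x_{l_k})
$$
is a shear: the image of a non-root coordinate depends on the root coordinate $x_{l_k}$ of its own component. If you actually substitute your rule $x_i \mapsto qx_i + (1-q)$, $x_l \mapsto x_l - 1 + q$ into $c(i,F;t) = x_i/(1+t)^j - x_l$, you get $\bigl(qx_i + 1 - q\bigr)/(1+t)^j - (x_l - 1 + q)$, whereas $c(i,F;q,t) = \bigl(qx_i - (1-q)(1-x_l)\bigr)/(1+t)^j - (x_l - 1 + q)$; these agree only on the hyperplane $x_l = 2$. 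The parenthetical ``one should verify this node-by-node'' step in your write-up does not go through, and the subsequent ``main obstacle'' you identify — trailing non-root zeros landing at $-(1-q)/q$, non-root powers $(1+t)^j$ not being fixed, and even root value $1$ being moved — is a symptom of the wrong map rather than a difficulty to be circumvented. With $B_F$ there is no such obstacle: $c(k,F;q,t)\bigl(B_F(x)\bigr) = q\cdot c(k,F;t)(x)$ identically, so $B_F(\pS_F(t)) = \pS_F(q,t)$; on a vertex $v_p(F;t)$ a live component has root $x_l = 1$, so $B_F$ fixes both root and non-root coordinates there, while a dead component has root $x_l = 0$ and non-roots $x_i = 0$, and $B_F$ sends all of them to $0 + (1-q)(1-0) = 1-q$. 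That is exactly the ``replace trailing $0$'s by $1-q$'' description, with no extra case analysis on the component tails. So the fix is to replace your $\Theta_{q,t}$ by $B_F$ and then the rest of your argument collapses to the paper's two-line verification.
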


Let $V_n(q,t)$ be the set $V_n(t)$ in which we replace the trailing $1$'s of each point by $1-q$ (see Example~\ref{exm12}).
We conclude with the main result of this section:

\begin{thm} \label{verttutte}
 For $t > 0$ and $0 \leq q < 1$, $V_n(q,t)$ is the set of vertices of $\s T_n(q,t)$. In particular, the Tutte polytope $\s T_n(q,t)$ has $2^n$ vertices.
\end{thm}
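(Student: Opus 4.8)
The plan is to deduce the statement from the earlier description of the triangulation $\set{\pD_F(q,t)}$ (Theorem~\ref{thm12}) together with the vertex computation for the pieces given by Propositions~\ref{vertsx} and~\ref{vertqsx}. First I would recall that the polytopes $\pD_F(q,t)$, as $F$ ranges over plane forests on $n+1$ nodes, subdivide $\pT_n(q,t)$, and likewise the simplices $\pS_F(q,t)$ over labeled forests triangulate it. Every vertex of $\pT_n(q,t)$ is therefore a vertex of some $\pS_F(q,t)$; by Proposition~\ref{vertqsx} such a vertex is an element of $V(F;q,t)$, which by construction is obtained from a point of $V(F;t)$ by replacing trailing $0$'s with $1-q$. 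So the first task is to identify, among the points of $V(F;t)$ for labeled trees/forests $F$, exactly which ones arise (over all $F$) and to check that after the $0 \mapsto 1-q$ substitution the resulting set is precisely $V_n(q,t)$. Since $V_n(q,t)$ is defined from $V_n(t)$ by the same trailing-substitution, it suffices to prove the un-deformed statement: \emph{the union over all labeled forests $F$ of the vertex sets $V(F;t)$, restricted to those points that are actual vertices of $\pG_n(t)$, equals $V_n(t)$}; equivalently that $V_n(t)$ is the vertex set of $\pG_n(t)$ and of $\pC_n(t)$ (Proposition~\ref{vertcayley}).

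So the core of the argument is really about $\pC_n(t)$ and $\pG_n(t)$, whose defining inequalities are simple. For $\pC_n(t)$, each inequality is either $x_i \ge 1$ or $x_i \le (1+t)x_{i-1}$ (with $x_0=1$), a total of $2n$ facet-defining inequalities in $\R^n$. A vertex is a point where $n$ linearly independent ones hold with equality. The key observation is the ``staircase'' structure: if we pick for each $i$ one of the two inequalities to be tight, the resulting linear system is triangular (the $i$-th equation involves $x_i$ and $x_{i-1}$ only) and has a unique solution, computed top-down: $x_1 \in \{1,\,1+t\}$, and then $x_i = 1$ or $x_i = (1+t)x_{i-1}$ accordingly. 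This gives at most $2^n$ candidate vertices, namely exactly the set $V_n(t)$. I would then check (i) each such point actually lies in $\pC_n(t)$ — immediate, since all coordinates produced this way are $\ge 1$ and the chosen inequalities are tight while the others are automatically satisfied because $1 \le x_{i-1}$ forces $(1+t)x_{i-1} \ge 1+t > 1$ — and (ii) each is a genuine vertex, i.e. the $n$ tight inequalities are independent, which follows from the triangular form. Finally one argues there are no other vertices: any vertex must make $n$ independent inequalities tight, and since for each index $i$ at most the pair $\{x_i \ge 1,\ x_i \le (1+t)x_{i-1}\}$ is available and they cannot both be tight unless $(1+t)x_{i-1}=1$, which is impossible given $x_{i-1}\ge 1$, exactly one per index is tight — so we are in the enumerated family. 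The same analysis applies verbatim to $\pG_n(t)$ with $1$ replaced by $0$ on the lower bounds; here $\pG_n(t)=\pO(1+t,(1+t)^2,\dots)$ is an orthoscheme, so it has exactly $n+1$ vertices, and one checks $V_n(t)$ in the ``$0$-version'' collapses accordingly — but for the Tutte statement we only need $\pC_n(t)$, handled above, plus the degeneration.

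For the passage from $\pC_n(t)$ to $\pT_n(q,t)$: by Theorem~\ref{thm12} the Tutte polytope is subdivided into the $\pD_F(q,t)$, and by Proposition~\ref{prop3} each is built from the $\pD_{T_i}(t)$ by iterated $\cone_q$; since $\cone_q(\s P)$ has apex $(1-q,\dots,1-q)$ and base $\set 1 \times \s P$, its vertices are the apex together with the vertices of $\s P$ lifted to $x_0=1$. Unwinding the iteration, the vertices of $\pD_F(q,t)$ are exactly the images under this construction of the vertices of the $\pD_{T_i}(t)$'s, and tracking coordinates shows these images are obtained from vertices of the corresponding $\pD_F(t)$ by replacing the trailing block of $0$'s (the coordinates belonging to components that have been ``coned away'') by $1-q$. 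Combining with the $q=1$ (i.e. $t$-Gayley / $t$-Cayley) vertex description already proved, the full vertex set of $\pT_n(q,t)$ is the set of points obtained from $V_n(t)$ by the trailing $0 \mapsto 1-q$ substitution, which is the definition of $V_n(q,t)$; and one must check no two distinct elements of $V_n(t)$ collapse to the same point (they don't, since the leading, non-trailing-$0$ block is preserved and determines the rest), so $|V_n(q,t)| = |V_n(t)| = 2^n$. The one genuinely delicate point — the main obstacle — is making the coning bookkeeping precise: verifying that the apex $(1-q,\dots,1-q)$ in the nested $\cone_q$ construction really produces the trailing $1-q$ pattern in the original $(x_1,\dots,x_n)$ coordinates (as opposed to some other affine image), and confirming that a point of $V_n(q,t)$ that happens to be a vertex of one subdivision cell is also a vertex of the whole polytope $\pT_n(q,t)$ — for this last point one uses that a point of a polytopal subdivision which is a vertex of the ambient polytope must be a vertex of every cell containing it, and conversely, that each $v_p \in V_n(q,t)$ fails to be a convex combination of the others because its non-trailing block already pins it down among all candidates. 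I expect the linear-algebra / coning details to be routine once the staircase structure is in hand, so the write-up would spend most of its length on the $\pC_n(t)$ case and then be brief about the degeneration.
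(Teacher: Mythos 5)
Your opening steps are fine and match the paper's strategy (the staircase argument for Proposition~\ref{vertcayley}, and the observation that every vertex of $\pT_n(q,t)$ must be a vertex of some cell of the triangulation, hence lies in some $V(F;q,t)$ by Proposition~\ref{vertqsx}). The first genuine gap is the claimed reduction to the undeformed case. The statement ``$V_n(t)$ is the vertex set of $\pG_n(t)$'' is false --- $\pG_n(t)$ is the orthoscheme $\pO(1+t,\dots,(1+t)^n)$ and has only $n+1$ vertices --- and more importantly, knowing the vertices of $\pC_n(t)$ and $\pG_n(t)$ does not determine which cell vertices survive as vertices of $\pT_n(q,t)$ for $0<q<1$: a point can be a vertex of every cell containing it without being a vertex of the ambient polytope. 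Concretely, the union of the sets $V(F;q,t)$ is much larger than $V_n(q,t)$. For the path tree, $\pD_F(q,t)=\pD_T(t)=[1,1+t]^n$ is a cell, so $(1+t,1+t,\dots,1+t)$ is a cell vertex but not in $V_n(q,t)$; more delicately, points such as $(1+t,1,1-q,\dots,1-q)$ (a $V_k(t)$-prefix ending in $1$, followed by the $1-q$ tail) are cell vertices but not in $V_n(q,t)$, since the definition of $V_n(q,t)$ replaces \emph{all} trailing $1$'s (not $0$'s, by the way --- that substitution belongs to $V(F;t)\mapsto V(F;q,t)$, and the conflation matters exactly here). So one must actually prove $\pT_n(q,t)=\conv\, V_n(q,t)$, i.e.\ that all these extra cell vertices are redundant. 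The paper does this by characterizing the union of the $V(F;q,t)$ and showing, for instance, that $(x_1,\dots,x_{k-1},1,1-q,\dots,1-q)$ lies on the segment between $(x_1,\dots,x_{k-1},(1+t)x_{k-1},1-q,\dots,1-q)$ and $(x_1,\dots,x_{k-1},1-q,\dots,1-q)$, iterating until one lands in $V_n(q,t)$. Your write-up asserts the conclusion (``the full vertex set is the trailing-substituted $V_n(t)$'') without any such argument, and no appeal to the $q=0$ or $q=1$ specializations can supply it.

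The second gap is minimality: your reason that each point of $V_n(q,t)$ is a genuine vertex --- ``its non-trailing block already pins it down among all candidates'' --- does not work, because the coordinate values occurring in these points are not extreme coordinatewise: a coordinate equal to $1$ lies strictly between $1-q$ and $1+t$, both of which are attained in the same coordinate by other points of $V_n(q,t)$, so nothing is ``pinned down'' under convex combinations. What is needed is, for each pair $x^S\neq x^T$, a defining inequality of $\pT_n(q,t)$ that is tight at $x^S$ and strict at $x^T$; this is exactly Lemma~\ref{lemma2} in the paper, whose proof is a nontrivial case analysis on $S\not\subseteq T$ versus $S\subset T$ (with sub-cases involving $\max S$ and $\max T$) using the inequalities $(\diamond)$. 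Both of these missing steps are where the actual content of Theorem~\ref{verttutte} lies; the coning bookkeeping you flag as the ``delicate point'' is comparatively routine.
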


\medskip

\section{Application: a recursive formula}\label{s:app}

The results proved in this paper yield an interesting recursive formulas for the generating function for (or the number of) labeled connected graphs. Let
$$F_n(t) \. = \. \sum t^{|E(G)|} \. = \. t^{n-1} \ts \Inv_n(1+t)\ts,$$
where the sum is over labeled connected graphs on $n$ nodes.

\begin{thm} \label{rec2}
 Define polynomials $r_n(t)$, $n \geq 0$, by
 $$r_0(t) = 1, \qquad r_n(t) = - \sum_{j = 1}^{n} \binom n{j} (1 + t)^{\binom{j}2} r_{n-j}(t).$$
 Then
 $$F_{n+1}(t) = \sum_{j=0}^n \binom n j (1 + t)^{\binom{j+1}2} r_{n-j}(t).$$
\end{thm}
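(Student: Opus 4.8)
The plan is to extract the recursion from the known formula for $\vol \pG_n(t)$ together with the standard ``exponential formula'' relating connected graphs to all graphs, now packaged as a $t$-deformed identity. Recall that Theorem~\ref{thm8} gives $n!\ts\vol \pG_n(t) = \sum_{G \in \cG_{n+1}} t^{|E(G)|} = (1+t)^{\binom{n+1}2}$, and that $F_m(t) = \sum t^{|E(G)|}$ summed over \emph{connected} labeled graphs on $m$ nodes. A labeled graph on $n+1$ nodes is determined by the connected component containing the vertex $n+1$ (say of size $j+1$, chosen in $\binom{n}{j}$ ways, contributing $F_{j+1}(t)$) together with an arbitrary graph on the remaining $n-j$ nodes (contributing $(1+t)^{\binom{n-j}2}$). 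Hence
$$
(1+t)^{\binom{n+1}2} \. = \. \sum_{j=0}^n \binom nj \ts F_{j+1}(t) \ts (1+t)^{\binom{n-j}2}\ts.
$$
This is a triangular linear system expressing the known quantities $(1+t)^{\binom{n+1}2}$ in terms of the unknowns $F_{j+1}(t)$, and the theorem is simply its inversion, with $r_n(t)$ playing the role of the inverse-matrix entries.

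First I would make the substitution explicit: write $a_j = (1+t)^{\binom j2}$ and $b_j = F_{j+1}(t)$, so the displayed identity reads $a_{n+1} = \sum_{j=0}^n \binom nj b_j a_{n-j}$. Define $r_n(t)$ by $r_0 = 1$ and $r_n = -\sum_{j=1}^n \binom nj a_j r_{n-j}$, exactly as in the statement; this is the binomial-convolution inverse of the sequence $(a_j)$, i.e.\ $\sum_{j=0}^n \binom nj a_j r_{n-j} = [n=0]$. Then I would compute $\sum_{j=0}^n \binom nj b_j r_{n-j}$ by substituting the relation $b_j a_{?} = \cdots$; more cleanly, convolve both sides of $a_{m+1} = \sum_{j=0}^m \binom mj b_j a_{m-j}$ against $(r_k)$ and use associativity of binomial convolution. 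Concretely, $\sum_{m} \binom nm r_{n-m} a_{m+1}$ on one side, and on the other $\sum_m \binom nm r_{n-m}\sum_j \binom mj b_j a_{m-j} = \sum_j \binom nj b_j \bigl(\sum \binom{n-j}{\cdot} r_\cdot a_\cdot\bigr) = b_n$ after collapsing the inner convolution of $(r)$ and $(a)$ to a delta. This yields $b_n = F_{n+1}(t) = \sum_{m=0}^n \binom nm r_{n-m}\ts a_{m+1} = \sum_{m=0}^n \binom nm (1+t)^{\binom{m+1}2} r_{n-m}(t)$, which is exactly the claimed formula (reindex $j = m$).

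The only genuine content is the combinatorial decomposition in the first display; everything after is the formal fact that a sequence defined by $r_n = -\sum_{j\ge 1}\binom nj a_j r_{n-j}$ is the binomial-convolutional inverse of $(a_n)_{n\ge 0}$ with $a_0 = 1$, together with associativity of that convolution. I expect the main obstacle — really the only point needing care — is bookkeeping in the double-sum interchange: one must be sure the ``component of $n+1$'' decomposition is a genuine bijection (each graph on $n+1$ vertices splits uniquely, since the component of a fixed vertex is well defined and the complement is unconstrained), and that the indices in the nested binomial convolution telescope correctly, i.e.\ $\binom nm\binom mj = \binom nj\binom{n-j}{m-j}$. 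I would present the bijective identity with a one-line justification, then carry out the convolution inversion in two or three displayed lines, and reindex to match the stated form of $F_{n+1}(t)$.
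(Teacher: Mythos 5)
Your proof is correct, but it takes a genuinely different route from the paper's. The paper works directly with the volume interpretation $F_{n+1}(t) = n!\,\vol\pC_n(t)$: it introduces the iterated integral
$$I_n(x,t) = n! \int_1^{(1+t)x}\!\!\! dx_1\int_1^{(1+t)x_1}\!\!\! dx_2 \cdots \int_1^{(1+t)x_{n-1}}\!\!\! dx_n,$$
proves by a one-step integration and induction on $n$ that $I_n(x,t) = \sum_{j=0}^n \binom{n}{j}(1+t)^{\binom{j+1}{2}} r_{n-j}(t)\,x^j$, and then sets $x=1$. Your route bypasses the polytope machinery entirely: the only input is the elementary weighted count $\sum_{G\in\cG_{n+1}} t^{|E(G)|} = (1+t)^{\binom{n+1}{2}}$ (each of the $\binom{n+1}{2}$ potential edges contributes a factor $1+t$), which you don't actually need Theorem~\ref{thm8} for. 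From there the ``component of vertex $n+1$'' decomposition gives $a_{n+1} = \sum_{j=0}^n\binom{n}{j} F_{j+1}(t)\,a_{n-j}$ with $a_j = (1+t)^{\binom{j}{2}}$, and you solve for $F_{n+1}$ by observing that $(r_n)$ as defined is exactly the binomial-convolutional inverse of $(a_n)$. Both arguments are clean and of comparable length; yours has the advantage of being self-contained and purely combinatorial (an exponential-formula inversion), whereas the paper's ties the recursion transparently to the polytope-volume narrative that motivates the whole section. Your bookkeeping is right: the shifted index $a_{m+1}$ convolved against $(r_k)$ telescopes via $\binom{n}{m}\binom{m}{j} = \binom{n}{j}\binom{n-j}{m-j}$ and $\sum_k\binom{n-j}{k}a_k r_{n-j-k} = [\,j=n\,]$, producing exactly the claimed formula.
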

\begin{proof}
 Define
$$I_n(x,t) = n! \int_1^{(1+t)x} dx_1\int_1^{(1+t)x_1}dx_2 \int_1^{(1+t)x_2} dx_3 \cdots \int_1^{(1+t)x_{n-1}} dx_n.$$
Clearly, we have
$$I_n(x,t) = n \int_1^{(1+t)x} I_{n-1}(x_1,t) dx_1.$$
We claim that
$$I_n(x,t) = \sum_{j = 0}^n \binom n j (1+t)^{\binom{j+1}2} r_{n-j}(t) x^j,$$
where the polynomials $r_j(t)$ are defined in Theorem \ref{rec2}. Indeed, the statement is obviously true for $n = 0$, and by induction,
$$I_n(x,t) = n \int_1^{(1+t)x} \left( \sum_{j = 0}^{n-1} \binom {n-1} j (1+t)^{\binom{j+1}2} r_{n-1-j}(t) x_1^j\right) dx_1 = $$
$$= n \sum_{j = 0}^{n-1} \binom {n-1} j (1+t)^{\binom{j+1}2} r_{n-1-j}(t) \left( \frac{(1+t)^{j+1}}{j+1} x^{j+1} - \frac 1{j+1}\right) =$$
$$= \sum_{j = 1}^n \binom n j(1+t)^{k+1} r_{n-j}(t)x^j - \sum_{j = 0}^{n-1} \binom n{j+1} (1+t)^{\binom{j+1} 2} r_{n-1-j}(t),$$
which by definition of $r_n(t)$ equals
$$ \sum_{j = 0}^n \binom n j (1+t)^{\binom{j+1}2} r_{n-j}(t) x^j. $$
Theorem \ref{rec2} follows by plugging in $x = 1$ into the equation.
\end{proof}

\medskip

\section{Examples} \label{examples}

\begin{exa} \label{exm1} {\rm \small
Let $G$ be the graph on the left-hand side of Figure~\ref{fig5}. The neighbors-first search starts in node $12$ and visits the other nodes in order $11,10,6,8,7,9,3,1,4,2,5$. The resulting search tree $\Phi(G)$ is on the right-hand side of Figure~\ref{fig5}. The edges of $G$ that are not in $\Phi(G)$ are dashed.

\begin{figure}[ht!]
 \begin{center}
   \includegraphics[height=4cm]{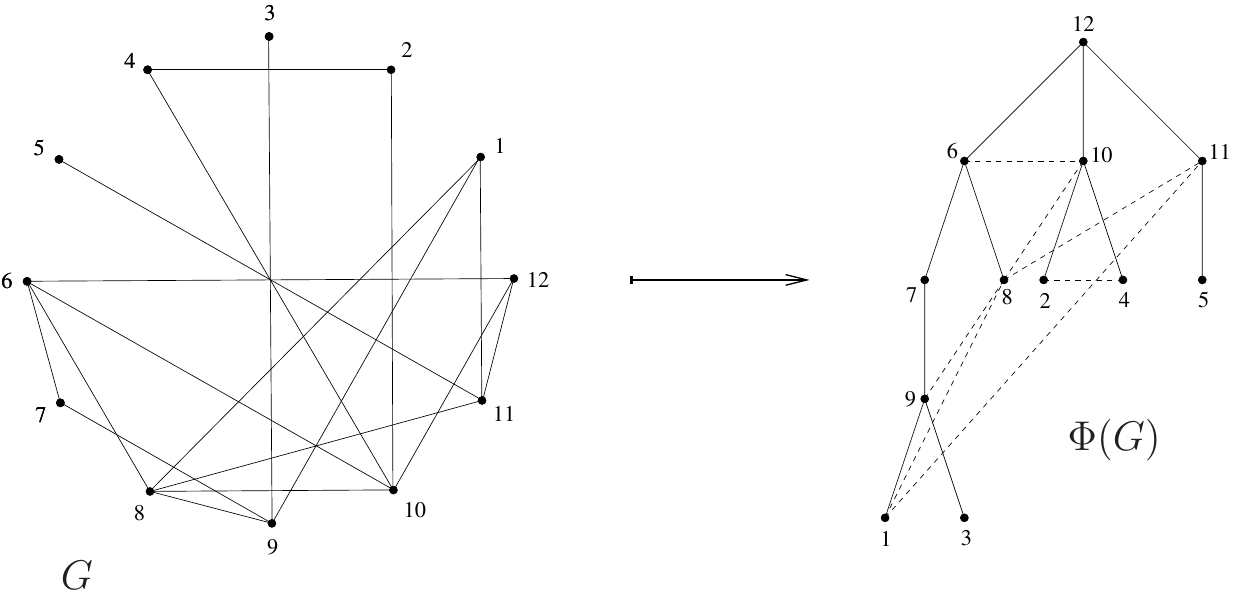}
   \caption{A connected graph and its NFS tree.}
   \label{fig5}
 \end{center}
\end{figure}

}\end{exa}

\begin{exa} \label{exm2} {\rm \small
For the tree $T$ drawn on the right-hand side of Figure~\ref{fig5}, the
coordinates of the nodes with labels $1,\ldots,11$ are shown in
Figure~\ref{fig7}. We have $\alpha(T) = 21$.

 \begin{figure}[ht!]
 \begin{center}
   \includegraphics[height=4cm]{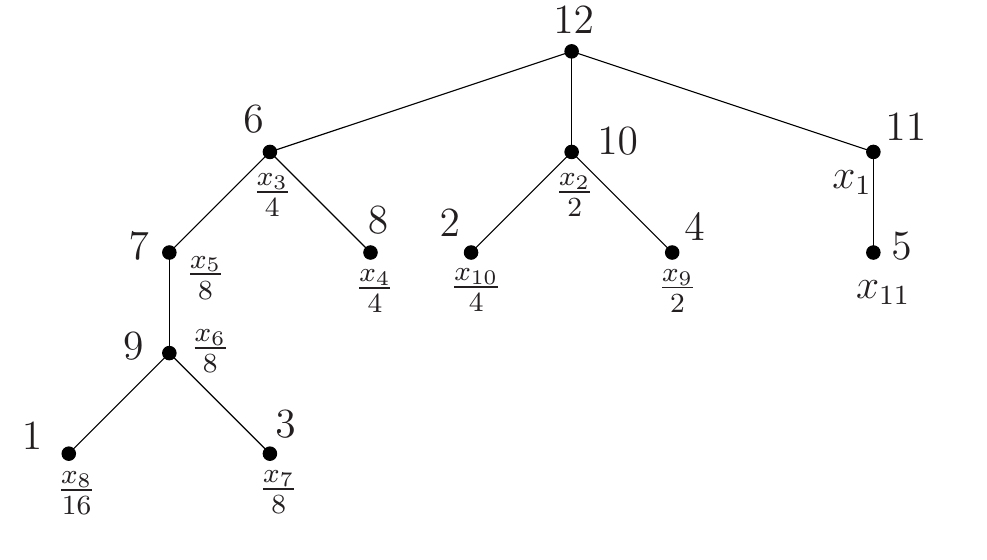}
   \caption{Coordinates in a tree.}
   \label{fig7}
 \end{center}
\end{figure}
The corresponding set $\pS_T$ is the set of points $(x_1,\ldots,x_{11})$ satisfying
$$
1 \leq \frac{x_8}{16} \leq \frac{x_{10}}4  \leq \frac{x_7}8  \leq \frac{x_9}2
\leq x_{11}  \leq  \frac{x_3}4 \leq \frac{x_5}8  \leq \frac{x_4}4  \leq \frac{x_6}8
\leq \frac{x_2}2  \leq x_1 \leq 2\ts.
$$
} \end{exa}

\begin{exa} \label{exm3} {\rm \small
 For the graph $G$ from the left-hand side of Figure~\ref{fig5}, $\s D_{\Psi(G)}(t)$ is


$$\left\{\begin{array}{llll}
             (x_1,\ldots,x_{11}) \colon & 1 \leq x_ 1 \leq 2, & 2 \leq x_ 2 \leq 2\ts x_ 1, & 4 \leq x_3 \leq 2\ts x_ 2,\\
              4 \leq x_ 4 \leq 8, & 8 \leq x_ 5 \leq 2\ts x_4, & 8 \leq x_ 6 \leq 16, & 8 \leq x_ 7 \leq 16,\\
	    16 \leq x_ 8 \leq 2\ts x_ 7, & 2 \leq x_ 9 \leq 4, &4 \leq x_{10} \leq 2\ts x_ 9,&1 \leq x_{11} \leq 2
            \end{array}\right\}.$$


}\end{exa}

\begin{exa} \label{exm4} {\rm \small
 Take $G$ to be the (disconnected) graph on the left-hand side of
 Figure~\ref{fig10}. The search forest $F$ of the NFS is given on the right.
\begin{figure}[ht!]
 \begin{center}
   \includegraphics[height=4cm]{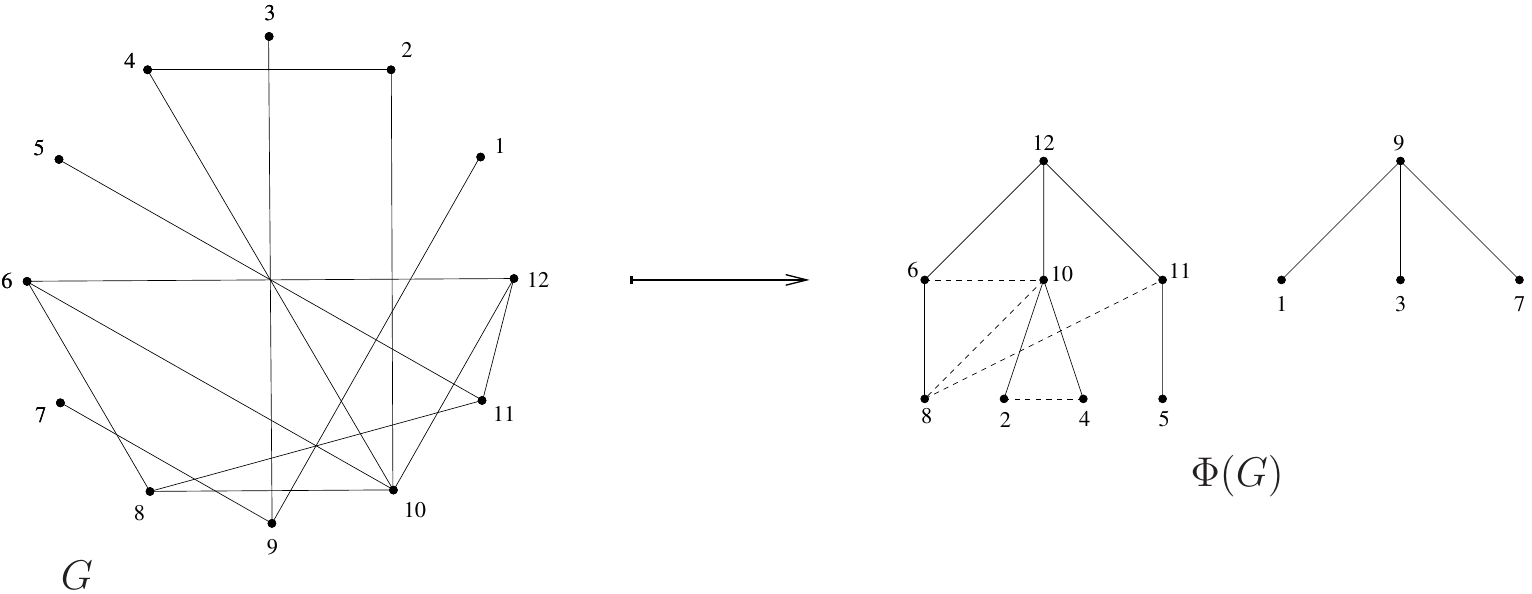}
   \caption{A disconnected graph and its NFS forest.}
   \label{fig10}
 \end{center}
\end{figure}
 Figure~\ref{fig11} illustrates the coordinates we attach to the nodes of the forest $F$. The corresponding set $\pS_F$ is the set of points $(x_1,\ldots,x_{11})$ satisfying
$$0 \leq \frac{x_{11}}4 - x_8 \leq  \frac{x_{6}}4 - 1 \leq  \frac{x_{10}}2 - x_8 \leq  \frac{x_{5}}2 - 1 \leq  x_{7} - 1 \leq $$
$$ \leq  \frac{x_{3}}4 - 1  \leq x_{9} - x_8 \leq  \frac{x_{4}}4 - 1 \leq x_8  \leq \frac{x_{2}}2 - 1 \leq  x_{1} - 1 \leq 1.$$

 \begin{figure}[ht!]
 \begin{center}
   \includegraphics[height=4.1cm]{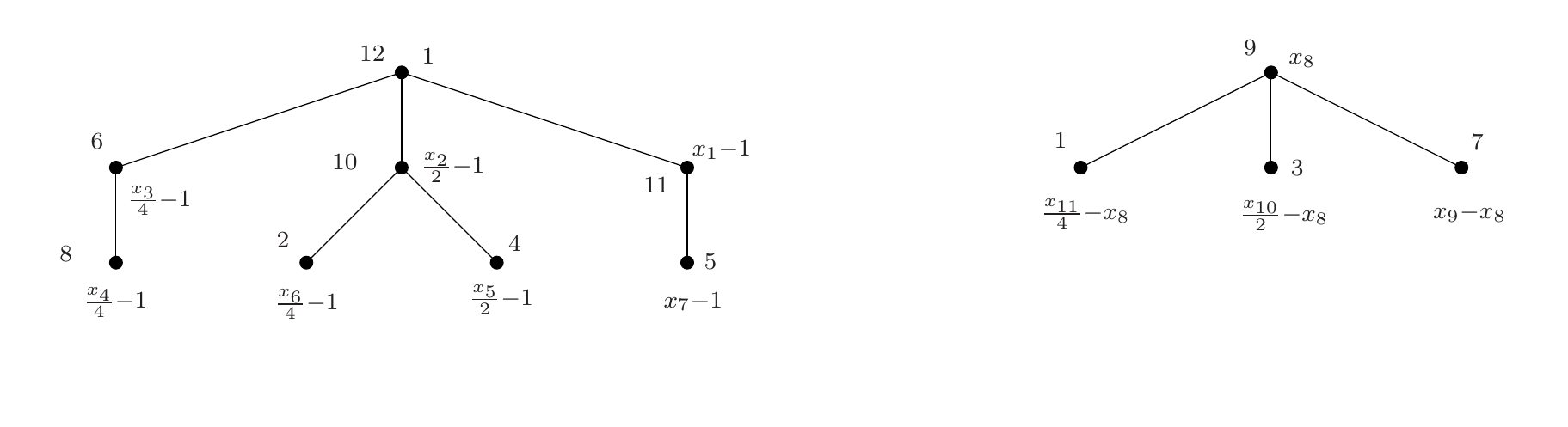}
   \caption{Coordinates in a forest.}
   \label{fig11}
 \end{center}
\end{figure}

}\end{exa}

\begin{exa} \label{exm5} {\rm \small
 For the right component $T_2$ of the forest on the right-hand side of Figure~\ref{fig10},
$$
\pD_{T_2} \. = \. \set{(x_1,x_2,x_3) \colon 1 \leq x_1 \leq 2\ts, \. 2 \leq x_2 \leq 2\ts x_1\ts, \. 4 \leq x_3 \leq 2\ts x_2},
$$
so
$$
\cone(\pD_{T_2}) \. = \.
\left\{(x_0,x_1,x_2,x_3) \colon 0 \leq x_0 \leq 1, \. 1 \leq \frac{x_1}{x_0} \leq 2, \. 2 \leq \frac{x_2}{x_0} \leq 2\ts \frac{x_1}{x_0}, \. 4 \leq \frac{x_3}{x_0} \leq 2\ts \frac{x_2}{x_0}\right\}.
$$
 For the graph $G$ from the left-hand side of Figure~\ref{fig10}, $\s D_{\Psi(G)}$ is
$$\left\{\begin{array}{llll}
             (x_1,\ldots,x_{11}) \colon & 1 \leq x_ 1 \leq 2, & 2 \leq x_ 2 \leq 2\ts x_ 1, & 4 \leq x_3 \leq 2\ts x_ 2,\\
              4 \leq x_ 4 \leq 8, & 2 \leq x_ 5 \leq 4, & 4 \leq x_ 6 \leq 2\ts x_5, & 1 \leq x_ 7 \leq 2,\\
	    0 \leq x_ 8 \leq 1, & x_8 \leq x_ 9 \leq 2\ts x_8, &2\ts x_8 \leq x_{10} \leq 2\ts x_ 9,&4\ts x_8 \leq x_{11} \leq 2\ts x_{10}
            \end{array}\right\}.$$

}\end{exa}

\begin{exa} \label{exm6} {\rm \small
 For the graph $G$ on the left-hand side of Figure~\ref{fig5}, $S_{\Phi(G)}(t)$ is the set of points $(x_1,\ldots,x_{11})$ satisfying
 $$1 \leq \frac{x_8}{(1 + t)^4} \leq \frac{x_{10}}{(1 + t)^2}  \leq \frac{x_7}{(1 + t)^3}  \leq \frac{x_9}{1 + t}  \leq x_{11}  \leq  \frac{x_3}{(1 + t)^2} \leq $$
$$\leq \frac{x_5}{(1 + t)^3}  \leq \frac{x_4}{(1 + t)^2}  \leq \frac{x_6}{(1 + t)^3}  \leq \frac{x_2}{1 + t}  \leq x_1 \leq 1 + t.$$
}\end{exa}

\begin{exa} \label{exm7} {\rm \small  For the graph $G$ from the left-hand side of Figure~\ref{fig5}, $\s D_{\Psi(G)}(t)$ is

$$\left\{\begin{array}{lll}
             (x_1,\ldots,x_{11}) \colon & 1 \leq x_ 1 \leq 1+t, & 1+t \leq x_ 2 \leq (1+t)\ts x_ 1, \\
	  (1+t)^2 \leq x_3 \leq (1+t)\ts x_ 2, & (1+t)^2 \leq x_4 \leq (1+t)^3, & (1+t)^3 \leq x_ 5 \leq (1+t)\ts x_4, \\
           (1+t)^3 \leq x_ 6 \leq (1+t)^4, & (1+t)^3 \leq x_ 7 \leq (1+t)^4, & (1+t)^4 \leq x_8 \leq (1+t)\ts x_ 7, \\
            1+t \leq x_ 9 \leq (1+t)^2, &(1+t)^2 \leq x_{10} \leq (1+t)\ts x_ 9,&1 \leq x_{11} \leq 1+t
            \end{array}\right\}.$$

}\end{exa}

\begin{exa} \label{exm8} {\rm \small
  Take $G$ to be the graph on the left-hand side of Figure~\ref{fig10}. Figure~\ref{fig12} illustrates the coordinates we attach to the nodes of the corresponding forest $F$. The corresponding set $\pS_F(t)$ is the set of points $(x_1,\ldots,x_{11})$ satisfying
$$0 \leq \frac{x_{11}}{(1 + t)^2} - x_8 \leq  \frac{x_{6}}{(1 + t)^2} - 1 \leq  \frac{x_{10}}{1 + t} - x_8 \leq  \frac{x_{5}}{1 + t} - 1 \leq  x_{7} - 1 \leq $$
$$ \leq  \frac{x_{3}}{(1 + t)^2} - 1  \leq x_{9} - x_8 \leq  \frac{x_{4}}{(1 + t)^2} - 1 \leq t x_8  \leq \frac{x_{2}}{1 + t} - 1 \leq  x_{1} - 1 \leq t.$$

 \begin{figure}[ht!]
 \begin{center}
   \includegraphics[height=4.0cm]{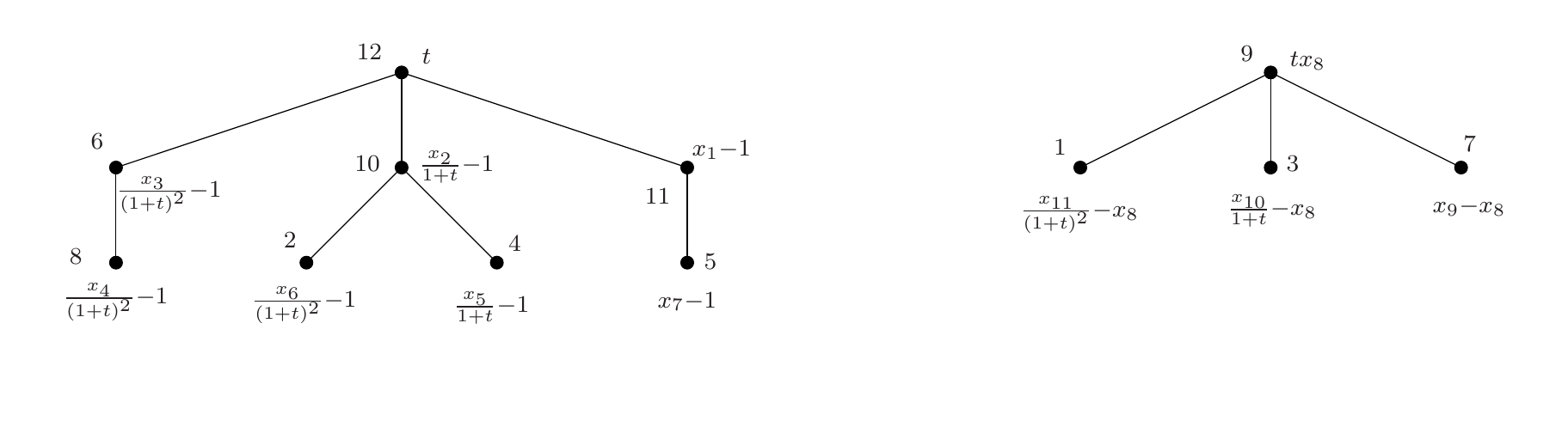}
   \caption{Coordinates in a forest (the $t$-Gayley case).}
   \label{fig12}
 \end{center}
\end{figure}

}\end{exa}

\begin{exa} \label{exm9} {\rm \small For the graph $G$ from the left-hand side of Figure~\ref{fig10}, $\s D_{\Psi(G)}(t)$ is

$$\left\{\begin{array}{lll}
             (x_1,\ldots,x_{11}) \colon & 1 \leq x_ 1 \leq 1+t, & 1+t \leq x_ 2 \leq (1+t)\ts x_ 1, \\
	  (1+t)^2 \leq x_3 \leq (1+t)\ts x_ 2, & (1+t)^2 \leq x_4 \leq (1+t)^3, & 1+t \leq x_ 5 \leq (1+t)^2, \\
           (1+t)^2 \leq x_ 6 \leq (1+t)\ts x_5, & 1 \leq x_ 7 \leq 1+t, & 0 \leq x_8 \leq 1, \\
            x_8 \leq x_ 9 \leq (1+t)\ts x_8, &(1+t)x_8 \leq x_{10} \leq (1+t)\ts x_ 9,&(1+t)^2\ts x_8 \leq x_{11} \leq (1+t)\ts x_{10}
            \end{array}\right\}.$$

}\end{exa}

\smallskip

\begin{exa} \label{exm11} {\rm \small
\setcounter{MaxMatrixCols}{11}
 The coordinates of the vertices of $\pS_F(t)$ for the forest from Figure~\ref{fig12} are given by lines in the following table:
 {\footnotesize
 $$
\begin{matrix}
1 + t & (1 + t)^2 & (1 + t)^3 & (1 + t)^3 & (1 + t)^2 & (1 + t)^3 & 1 + t & 1 & 1 + t & (1 + t)^2 & (1 + t)^3 \\
1 + t & (1 + t)^2 & (1 + t)^3 & (1 + t)^3 & (1 + t)^2 & (1 + t)^3 & 1 + t & 1 & 1 + t & (1 + t)^2 & (1 + t)^2 \\
1 + t & (1 + t)^2 & (1 + t)^3 & (1 + t)^3 & (1 + t)^2 & (1 + t)^2 & 1 + t & 1 & 1 + t & (1 + t)^2 & (1 + t)^2 \\
1 + t & (1 + t)^2 & (1 + t)^3 & (1 + t)^3 & (1 + t)^2 & (1 + t)^2 & 1 + t & 1 & 1 + t & 1 + t & (1 + t)^2 \\
1 + t & (1 + t)^2 & (1 + t)^3 & (1 + t)^3 & 1 + t & (1 + t)^2 & 1 + t & 1 & 1 + t & 1 + t & (1 + t)^2 \\
1 + t & (1 + t)^2 & (1 + t)^3 & (1 + t)^3 & 1 + t & (1 + t)^2 & 1 & 1 & 1 + t & 1 + t & (1 + t)^2 \\
1 + t & (1 + t)^2 & (1 + t)^2 & (1 + t)^3 & 1 + t & (1 + t)^2 & 1 & 1 & 1 + t & 1 + t & (1 + t)^2 \\
1 + t & (1 + t)^2 & (1 + t)^2 & (1 + t)^3 & 1 + t & (1 + t)^2 & 1 & 1 & 1 & 1 + t & (1 + t)^2 \\
1 + t & (1 + t)^2 & (1 + t)^2 & (1 + t)^2 & 1 + t & (1 + t)^2 & 1 & 1 & 1 & 1 + t & (1 + t)^2 \\
1 + t & (1 + t)^2 & (1 + t)^2 & (1 + t)^2 & 1 + t & (1 + t)^2 & 1 & 0 & 0 & 0 & 0 \\
1 + t & 1 + t & (1 + t)^2 & (1 + t)^2 & 1 + t & (1 + t)^2 & 1 & 0 & 0 & 0 & 0 \\
1 & 1 + t & (1 + t)^2 & (1 + t)^2 & 1 + t & (1 + t)^2 & 1 & 0 & 0 & 0 & 0
   \end{matrix}$$
}
}\end{exa}

\smallskip

\begin{exa} \label{exm10} {\rm \small The coordinates of the vertices of $\pC_3(t)$ are given by lines in the following table:

$$\begin{matrix}
1 + t & (1 + t)^2 & (1 + t)^3 \\
1 + t & (1 + t)^2 & 1 \\
1 + t & 1 & 1 + t \\
1 + t & 1 & 1 \\
1 & 1 + t & (1 + t)^2 \\
1 & 1 + t & 1 \\
1 & 1 & 1 + t \\
1 & 1 & 1
   \end{matrix}
$$
}\end{exa}

\begin{exa} \label{exm13} {\rm \small
  The coordinates of the vertices of $\pS_F(q,t)$ for the forest from Figure~\ref{fig12} are given by lines in the following table:

 $$
\begin{matrix}
1 + t & \!\!\: (1 + t)^2 & \!\!\: (1 + t)^3 & \!\!\: (1 + t)^3 & \!\!\: (1 + t)^2 & \!\!\: (1 + t)^3 & \!\!\: 1 + t & \!\!\: 1 & \!\!\: 1 + t & \!\!\: (1 + t)^2 & \!\!\: (1 + t)^3 \\
1 + t & \!\!\: (1 + t)^2 & \!\!\: (1 + t)^3 & \!\!\: (1 + t)^3 & \!\!\: (1 + t)^2 & \!\!\: (1 + t)^3 & \!\!\: 1 + t & \!\!\: 1 & \!\!\: 1 + t & \!\!\: (1 + t)^2 & \!\!\: (1 + t)^2 \\
1 + t & \!\!\: (1 + t)^2 & \!\!\: (1 + t)^3 & \!\!\: (1 + t)^3 & \!\!\: (1 + t)^2 & \!\!\: (1 + t)^2 & \!\!\: 1 + t & \!\!\: 1 & \!\!\: 1 + t & \!\!\: (1 + t)^2 & \!\!\: (1 + t)^2 \\
1 + t & \!\!\: (1 + t)^2 & \!\!\: (1 + t)^3 & \!\!\: (1 + t)^3 & \!\!\: (1 + t)^2 & \!\!\: (1 + t)^2 & \!\!\: 1 + t & \!\!\: 1 & \!\!\: 1 + t & \!\!\: 1 + t & \!\!\: (1 + t)^2 \\
1 + t & \!\!\: (1 + t)^2 & \!\!\: (1 + t)^3 & \!\!\: (1 + t)^3 & \!\!\: 1 + t & \!\!\: (1 + t)^2 & \!\!\: 1 + t & \!\!\: 1 & \!\!\: 1 + t & \!\!\: 1 + t & \!\!\: (1 + t)^2 \\
1 + t & \!\!\: (1 + t)^2 & \!\!\: (1 + t)^3 & \!\!\: (1 + t)^3 & \!\!\: 1 + t & \!\!\: (1 + t)^2 & \!\!\: 1 & \!\!\: 1 & \!\!\: 1 + t & \!\!\: 1 + t & \!\!\: (1 + t)^2 \\
1 + t & \!\!\: (1 + t)^2 & \!\!\: (1 + t)^2 & \!\!\: (1 + t)^3 & \!\!\: 1 + t & \!\!\: (1 + t)^2 & \!\!\: 1 & \!\!\: 1 & \!\!\: 1 + t & \!\!\: 1 + t & \!\!\: (1 + t)^2 \\
1 + t & \!\!\: (1 + t)^2 & \!\!\: (1 + t)^2 & \!\!\: (1 + t)^3 & \!\!\: 1 + t & \!\!\: (1 + t)^2 & \!\!\: 1 & \!\!\: 1 & \!\!\: 1 & \!\!\: 1 + t & \!\!\: (1 + t)^2 \\
1 + t & \!\!\: (1 + t)^2 & \!\!\: (1 + t)^2 & \!\!\: (1 + t)^2 & \!\!\: 1 + t & \!\!\: (1 + t)^2 & \!\!\: 1 & \!\!\: 1 & \!\!\: 1 & \!\!\: 1 + t & \!\!\: (1 + t)^2 \\
1 + t & \!\!\: (1 + t)^2 & \!\!\: (1 + t)^2 & \!\!\: (1 + t)^2 & \!\!\: 1 + t & \!\!\: (1 + t)^2 & \!\!\: 1 & \!\!\: 1-q & \!\!\: 1-q & \!\!\: 1-q & \!\!\: 1-q \\
1 + t & \!\!\: 1 + t & \!\!\: (1 + t)^2 & \!\!\: (1 + t)^2 & \!\!\: 1 + t & \!\!\: (1 + t)^2 & \!\!\: 1 & \!\!\: 1-q & \!\!\: 1-q & \!\!\: 1-q & \!\!\: 1-q \\
1 & \!\!\: 1 + t & \!\!\: (1 + t)^2 & \!\!\: (1 + t)^2 & \!\!\: 1 + t & \!\!\: (1 + t)^2 & \!\!\: 1 & \!\!\: 1-q & \!\!\: 1-q & \!\!\: 1-q & \!\!\: 1-q
   \end{matrix}
$$
}\end{exa}

\begin{exa} \label{exm12} {\rm \small
  The coordinates of the vertices of $\s T_3(t)$ are given by lines in the following table:
 $$\begin{matrix}
  1 + t & (1 + t)^2 & (1 + t)^3 \\
 1 + t & (1 + t)^2 & 1-q \\
 1 + t & 1 & 1 + t \\
 1 + t & 1-q & 1-q \\
 1 & 1 + t & (1 + t)^2 \\
 1 & 1 + t & 1-q \\
1 & 1 & 1 + t \\
 1-q & 1-q & 1-q \\
\end{matrix}$$
}\end{exa}

\medskip

\section{Proofs} \label{proofs}

The proofs proceed as follows. First, we prove Theorems~\ref{thm6} and~\ref{thm7}, which give a triangulation and a subdivision of the $t$-Cayley polytope. If we plug in $t = 1$, we get Theorems~\ref{thm2} and~\ref{thm3}. A relatively simple extension of the proof yields Theorems~\ref{thm8} and~\ref{thm9} (about the $t$-Gayley polytope) and hence (for $t = 1$) also Theorems~\ref{thm4} and~\ref{thm5}. The proofs for subdivisions of the Tutte polytope (Theorems~\ref{thm10} and~\ref{thm12}) are similar and we provide a detailed proof for only some of them. The statements from Section~\ref{vertices} are relatively straightforward.


\subsection{Subdivisions of the $t$-Cayley polytope}

\begin{proof}[Proof of Lemma~\ref{alpha}]
 We prove the lemma by induction on the rank (distance from the root) of the node. The root has coordinate $x_0/2^0$, and there are obviously no cane paths from it. If the coordinate of a node $v$ is $x_i/2^j$ with $j$ the number of cane paths starting in $v$, then by construction its children $v_k,\ldots,v_1$  have coordinates $x_{i'}/2^j,\ldots,x_{i'+k-1}/2^{j+k-1}$. A cane path starting in $v_l$ goes either up to $v$ and then down to $v_{l'}$ for $l' > l$, or it goes up to $v$ and then coincides with a cane path starting in $v$. In other words, there are $j + k - l$ cane paths starting in $v_l$, and the coordinate is indeed $x_{i'+k-l}/2^{j+k-l}$. This finishes the proof.
\end{proof}

Take a labeled tree $T$ and the corresponding $\pS_T$ defined by
$$1 \leq x_{i_1}/(1 + t)^{j_{i_1}} \leq x_{i_2}/(1 + t)^{j_{i_2}} \leq \ldots \leq x_{i_n}/(1 + t)^{j_{i_n}} \leq 1 + t.$$
Define the transformation
$$A \colon x_{i} \mapsto (1+t)^{j_i}(tx_{i} + 1).$$
 Then $A(\pS_T)$ is defined by
$$0 \leq x_{i_1} \leq x_{i_2} \leq \ldots \leq x_{i_n} \leq 1$$
and is hence a simplex with volume $1/n!$. Since $A$ is the composition of a linear transformation with determinant $t^n(1+t)^{\alpha(T)}$ and a translation, $\pS_T$ is a simplex with volume $t^n(1+t)^{\alpha(T)}/n!$.

\smallskip

Now, let us compute the generating function
$$\sum_{\Phi(G) = T} t^{|E(G)|},$$
where the sum runs over all labeled connected graphs that map to $T$. Every such graph has all the $n$ edges of $T$. Call an edge $e \notin E(T)$ a \emph{cane edge of $T$} if there exists a cane path from one of the nodes to the other.

\begin{lemma}
 For a labeled connected graph $G$ we have $\Phi(G) = T$ if and only if $E(G) = E(T) \cup C$, where $C$ is a subset of the set of cane edges of $T$.
\end{lemma}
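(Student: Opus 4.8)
The plan is to characterize exactly when the NFS applied to a connected graph $G$ (containing $T$ as a subgraph, which is forced since $\Phi(G)=T$ requires all tree edges to be traversed) produces $T$. Since every candidate $G$ satisfies $E(T)\subseteq E(G)$, write $E(G)=E(T)\cup C$ with $C\subseteq \binom{V}{2}\setminus E(T)$; I must show $\Phi(T\cup C)=T$ if and only if $C$ consists of cane edges of $T$. First I would recall how the NFS reads off $T$: starting from node $n+1$, at each step the active node visits its not-yet-visited neighbors in decreasing label order and moves to the smallest, backtracking along already-visited nodes otherwise. The key observation is that adding a non-tree edge $e=\{u,w\}$ to $T$ affects $\Phi$ only through whether, at the moment one endpoint is active, the other endpoint is still unvisited — if it is, then $e$ becomes a search-tree edge and $\Phi(T\cup\{e\})\neq T$.

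The main step is therefore a structural lemma about the order in which nodes of the fixed tree $T$ get visited by $\Phi(T)$ itself (with no extra edges): running NFS on $T$ visits its nodes precisely in the depth-first order used to define the coordinates (children processed in increasing label order), because with only tree edges present, ``visit unvisited neighbors in decreasing label order and descend into the smallest'' is exactly depth-first traversal with the stated child ordering, and backtracking in NFS mirrors returning up the tree. Granting this, I would analyze, for a non-tree edge $e=\{v,v'\}$ with $v'$ having the larger label (or lying to the right), when $v'$ is unvisited at the time $v$ becomes active: this happens exactly when the unique path in $T$ from $v$ up toward the root and back down to $v'$ is a cane path, i.e.\ goes up some number of steps $k-1\ge 1$ and then makes a single down-right step to $v'$. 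The ``only if'' direction: if $e$ is \emph{not} a cane edge, then at the moment either endpoint is active the other has already been visited (it lies in an earlier-traversed subtree or is an ancestor already passed), so $e$ is an ignored non-tree edge and its presence does not change $\Phi$; conversely if $e$ \emph{is} a cane edge, then when $v$ is active, $v'$ is still unvisited, so NFS will traverse $e$ and place $v'$ as a child of $v$ in the output tree, changing $\Phi$.

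Finally I would assemble these single-edge facts into the full statement. For the ``if'' direction, suppose $C$ is a set of cane edges: I need that adding all of them simultaneously still yields $\Phi(T\cup C)=T$. This requires care, since a priori one added cane edge could alter the visit order and thereby cause a second cane edge to become a tree edge. The resolution is an inductive argument along the visit order: I claim that $\Phi(T\cup C)$ visits nodes in the same depth-first order as $\Phi(T)$, proved by induction on steps — at each step the active node is the same, its set of unvisited neighbors among $V(T)$ via tree edges is the same, and each cane edge incident to it leads to a node that (being reached by a cane path, hence lying strictly later in the depth-first order and not yet visited) would be visited \emph{later anyway} in $T$, so although such a neighbor is now available, NFS visits the already-present tree-children in decreasing label order first and the cane-edge endpoint is precisely the node NFS would next reach via backtracking-then-descending, so it gets the same position and the output edge is the tree edge, not the cane edge; the cane edges are exactly the non-tree edges, so none is promoted. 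For the ``only if'' direction, if $C$ contains some non-cane edge $e$, take the first such edge encountered in the NFS on $T\cup C$; by the induction the visit order agrees with $\Phi(T)$ up to that point, and by the single-edge analysis $e$ is either ignored (if both endpoints already visited) — contradicting that $e$ is non-cane only if... — so I must instead phrase it as: a non-cane edge is always ignored by NFS (both endpoints visited when either is active), hence it never appears in $\Phi(T\cup C)$, so if it were present $E(\Phi(T\cup C))$ would omit it while still needing $\Phi(T\cup C)=T$ forces $E(G)\setminus E(T)$ to be exactly the ignored edges; but $T$ has $n$ edges and $\Phi(T\cup C)=T$ is consistent with $e\in C$ only when $e$ is non-tree, which non-cane edges are — so actually both cane and non-cane non-tree edges are ignored?? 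This is the crux I must get right: a cane edge, when present, is \emph{not} ignored — it gets traversed and would become a tree edge, contradicting $\Phi(G)=T$ unless... wait, it does become part of the search tree, so $\Phi(T\cup\{e\})$ would have $e$ and drop some tree edge. Hence cane edges must \emph{not} be in $C$? That inverts the claim, so the correct reading must be the opposite: I expect the genuine statement is that cane edges are the ones that get ignored. Re-examining: a \textbf{cane path} from $u$ to $w$ means the tree path goes up then down-right, so $w$ is visited strictly before $u$ in depth-first order; thus when $u$ becomes active, $w$ is already visited, so edge $\{u,w\}$ is ignored — these are exactly the edges that can be freely added. Conversely a non-cane non-tree edge $\{u,w\}$ (with $w$ later in DFS order) has $w$ unvisited when $u$ is active, so it would be traversed and spoil $\Phi$. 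With this corrected dichotomy, the single-edge analysis, the inductive ``simultaneous'' argument, and the assembly all go through as above.

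\begin{proof}
Suppose first $E(G)=E(T)\cup C$ with every $e\in C$ a cane edge of $T$. Run the NFS on $G$; I claim by induction on the step number that the $i$-th visited node of $G$ equals the $i$-th visited node of $T$ (equivalently, the $i$-th node in the depth-first traversal of $T$ with children ordered by increasing label). Indeed, on $T$ alone the NFS is exactly this depth-first traversal: from the active node it descends into its smallest-labeled unvisited child, and backtracks up the tree when all children are done. At a given step the active node $u$ is, by induction, the same for $G$ and $T$; its unvisited tree-neighbors are the same, and for any cane edge $e=\{u,w\}\in C$ the endpoint $w$ is connected to $u$ by a cane path, hence $w$ lies strictly earlier than $u$ in the depth-first order of $T$ and so, by induction, $w$ has already been visited. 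Thus no edge of $C$ is ever traversed by the NFS: the search tree of $G$ uses only edges of $T$, and the visit order is unchanged. Since a search tree is determined by the visit order together with the traversed edges, $\Phi(G)=T$.

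Conversely, suppose $E(G)\supseteq E(T)$ and $C=E(G)\setminus E(T)$ contains an edge $e=\{u,w\}$ that is not a cane edge of $T$. Order the edges of $C$ by the first time the NFS on $G$ would touch an endpoint, and let $e$ be the first non-cane edge in this order. Up to that moment the NFS on $G$ has traversed only edges of $T$ and possibly cane edges of $C$, each of which (as shown above) is ignored, so the visit order agrees with that of $\Phi(T)$ so far. Now consider the tree path $P$ between $u$ and $w$ in $T$; since $e$ is not a cane edge, $P$ is not of the form ``some up steps followed by one down-right step.'' One checks that this forces $w$ (the later endpoint of $e$ in the depth-first order of $T$) to be \emph{unvisited} at the instant $u$ becomes active in the NFS on $T$, hence also in the NFS on $G$. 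Therefore, when $u$ is active, $w$ is an unvisited neighbor of $u$ via the edge $e$, so the NFS traverses $e$ and $e$ becomes an edge of the search tree of $G$. Consequently $\Phi(G)\neq T$, a contradiction. Hence every edge of $C$ is a cane edge of $T$.
\end{proof}
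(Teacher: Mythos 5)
Your argument rests on the claim that the NFS on a tree \emph{is} the depth-first traversal with children taken in increasing label order, so that ``visited'' and ``active'' coincide and you can decide whether an edge is traversed by comparing positions in that depth-first order. This is not what the NFS of the paper does: when a node becomes active, \emph{all} of its unvisited neighbors are marked visited at once (in decreasing label order), and only then does the smallest-labeled one become active. The order in which nodes become \emph{active} is indeed the depth-first order with children by increasing label, but the \emph{visit} order is different, and the distinction is exactly what the lemma turns on. For a cane edge $\{u,w\}$ whose cane path goes up from $u$ to an ancestor $a$ and then down-right to $w$, the node $w$ comes \emph{after} $u$ in the depth-first order you describe, so your key assertion ``$w$ lies strictly earlier than $u$ in the depth-first order of $T$, hence is already visited'' is false in this (main) case; it is only true in the other orientation, where $u$ is the down-right endpoint. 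The correct reason $w$ is already visited when $u$ becomes active is the neighbors-first feature: $w$ was marked visited at the moment its parent $a$ became active, which precedes the activation of anything in the subtree containing $u$, even though $w$ itself becomes active much later. If the search really were the DFS you describe, $w$ would still be unvisited when $u$ is reached, the cane edge would enter the search tree, and the statement would fail -- which is precisely why DFS (Gessel--Wang) yields a different correspondence than NFS.

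The same conflation infects the converse direction: you reduce it to ``one checks that [non-cane-ness] forces $w$ to be unvisited at the instant $u$ becomes active,'' with $w$ defined as ``the later endpoint in the depth-first order,'' but this is the substantive case analysis (at least two down steps after the up steps, plus the case where one endpoint is a non-parent ancestor of the other, where the edge is examined when the \emph{ancestor} becomes active, before the descendant is even visited), and it cannot be carried out correctly inside the DFS model you set up. The paper's proof runs the induction on the true NFS visit order and locates the far endpoints of cane edges explicitly (children of an ancestor of the active node, or descendants of a smaller-labeled sibling of it), which is the step your write-up is missing. So the overall strategy (cane edges are ignored; non-cane extra edges get traversed and spoil the tree; induct along the search to handle several extra edges simultaneously) matches the paper, but as written the pivotal ordering claim is wrong and the proof does not go through without replacing the DFS picture by the genuine visited-versus-active analysis.
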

\begin{proof}
 If $\Phi(G) = T$, then clearly $E(T) \subseteq E(G)$. Assume that there is an edge $e \in E(G) \setminus E(T)$ that is not a cane edge of $T$. Write $e = uv$, where $u$ is weakly to the left of $v$ (i.e.\ either $u$ is a descendant of $v$, or the unique path from $u$ to $v$ in $T$ goes up and then down right). Since $e$ is not a cane edge of $T$, the path in $T$ from $u$ to $v$ first has $k$, $k \geq 0$, up steps and then $l$, $l \geq 2$ down steps. But then when the NFS on $G$ visits $u$, $v$ is a previously unvisited neighbor of $v$, so $e$ is in the search tree, and $\Phi(G) \neq T$, which is a contradiction.

 For the other direction, assume that $E(G) = E(T) \cup C$, where $C$ is a set of cane edges of $T$. The neighbors of the node with label $n+1$ are the same in $G$ and $T$, so the beginning of the NFS is the same on $G$ and $T$. By induction, assume that the NFS visits the same nodes in the same order up to the node $v$. The edges from $v$ in $G$ are the same as in $T$, and possibly some cane edges of $T$. But all the cane edges are connected to previously visited nodes: these nodes are children of an ancestor of $v$ in $T$, or their are descendants of a left neighbor of $v$. In other words, no cane edge enters the search tree. That means that $\Phi(G) = T$.
\end{proof}

Since there are $\alpha(T)$ cane paths by Lemma~\ref{alpha}, this implies that
$$\sum_{\Phi(G) = T} t^{|E(G)|} = t^n (1+t)^{\alpha(t)},$$
which finishes the proof of the first part of Theorem~\ref{thm6}.



\begin{proof}[Proof of the first part of Theorem~\ref{thm7}]
Note that $\alpha(T)$ is the same for all labeled trees with the same underlying plane tree. Recall that $\pD_T$ for $T$ a plane tree is defined by determining the order of the coordinates of only the $d_i$ nodes with the same parent. There are clearly $\binom{n}{d_1,d_2,\ldots}$ ways to extend such orderings to an ordering of the coordinates of all nodes. In other words, $\pD_T$ is composed of $\binom{n}{d_1,d_2,\ldots}$ simplices with volume $t^n(1+t)^{\alpha(T)}$. There are also $\binom{n}{d_1,d_2,\ldots}$ ways to label a plane tree so that the labels of the nodes with the same parent are increasing from left to right. This proves that
$$n!\ts\vol \pD_T = \sum_{\Psi(G) = T} t^{|E(G)|} = t^n(1+t)^{\alpha(T)}\binom{n}{d_1,d_2,\ldots}.$$
It remains to express $\alpha(T)$ in terms of the degree sequence. Assume that the plane tree $T$ has a root with degree $k$ and subtrees $T_1,\ldots,T_k$ with $a_1,\ldots,a_k$ nodes. The number of cane paths in $T$ is equal to the number of cane paths that pass through the root, plus the number of cane paths in the trees $T_1,\ldots,T_k$. For a node in $T_j$, there are $k-j$ cane paths that start in that node and pass through the root of the tree. By induction, we have
$$\alpha(T) = \sum_{j=1}^k (k-j)a_j + \sum_{j=1}^k \alpha(T_j) = \sum_{j=1}^k (k-j)a_j + \sum_{j=1}^k \left( \binom{a_j}2 - \sum_{i=1}^{a_j} i d_{a_1+\ldots+a_{j-1}+i+1}\right) = $$
$$ =  \sum_{j=1}^k (k-j)a_j + \sum_{j=1}^k \left( \binom{a_j}2 - \sum_{i=a_1+\ldots+a_{j-1}+2}^{a_1+\ldots+a_{j}+1} (i - a_1-\ldots-a_{j-1}-1) d_i\right) =$$
$$ = \sum_{j=1}^k \left( (k-j)a_j + \binom{a_j} 2 + (a_1+\ldots +a_{j-1}+1)(a_j-1)\right) - \sum_{i=2}^{n+1} i d_i,$$
where we used the fact that
$$\sum_{i=a_1+\ldots+a_{j-1}+2}^{a_1+\ldots+a_{j}+1} d_i = a_j - 1.$$
It is easy to see that
$$
\sum_{j=1}^k \left( (k-j)a_j + \binom{a_j} 2 + (a_1+\ldots +a_{j-1}+1)(a_j-1)\right)
\. = \. \binom{a_1+\ldots+a_k+1}2 - k,
$$
which implies
$$\alpha(T) \. = \. \binom{n+1}2 - \sum_{i=1}^{n+1} i d_i
$$
and finishes the proof.
\end{proof}

\begin{lemma} \label{lemma1}
 For a labeled $($respectively, plane$)$ tree $T$ on $n+1$ vertices, $\s S_T(t) \subseteq \s C_n(t)$ $($respectively, $\s D_T(t) \subseteq \s C_n(t)$ \ts$)$.
\end{lemma}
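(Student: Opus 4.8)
The plan is to prove the labeled-tree statement $\pS_T(t)\subseteq\pC_n(t)$ directly and to deduce the plane-tree statement from it. Indeed, as noted in the proof of Theorem~\ref{thm7}, the polytope $\pD_T(t)$ of a plane tree $T$ is the union of the simplices $\pS_{T'}(t)$ taken over the $\binom{n}{d_1,d_2,\ldots}$ labeled trees $T'$ whose underlying plane tree is $T$ (the defining inequalities of $\pD_T(t)$ consist of the sibling-chains, which already linearly order each group of children, together with the resulting bounds $1\le(\cdot)\le 1+t$ on every coordinate; refining the incomparabilities between different sibling-groups to a total order yields exactly the $\pS_{T'}(t)$). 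Hence $\pD_T(t)\subseteq\pC_n(t)$ is immediate from $\pS_{T'}(t)\subseteq\pC_n(t)$. (Alternatively one may run the argument below verbatim for plane trees, reading ``to the right of'' in place of ``of larger label than''.)

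For the labeled case, fix $(x_1,\dots,x_n)\in\pS_T(t)$. Let $v_i$ be the node visited $i$-th by the NFS on $T$ (so $v_0$ is the root), let $j_i$ be the number of cane paths starting at $v_i$, and let $c_i:=x_i/(1+t)^{j_i}$ be the value of the corresponding entry of the chain defining $\pS_T(t)$; set $x_0=1$, $j_0=0$, $c_0=1$. By construction $c_i\in[1,1+t]$ for all $i$, and $c_i\le c_{i'}$ whenever $\operatorname{label}(v_i)<\operatorname{label}(v_{i'})$. The inequalities $x_i\ge 1$ are then immediate, since $x_i=(1+t)^{j_i}c_i\ge (1+t)^{j_i}\ge 1$. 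For the inequalities $x_i\le(1+t)\,x_{i-1}$ (with $x_0=1$) I would write
$$
\frac{(1+t)\,x_{i-1}}{x_i}\;=\;(1+t)^{\,1+j_{i-1}-j_i}\cdot\frac{c_{i-1}}{c_i}\,,
$$
and observe that this is $\ge 1$ as soon as we know the combinatorial fact that, for each $i$, either \textup{(a)} $j_i\le j_{i-1}$, or \textup{(b)} $j_i=j_{i-1}+1$ and $\operatorname{label}(v_i)<\operatorname{label}(v_{i-1})$ --- for in case \textup{(a)} the power is $\ge 1+t$ while $c_{i-1}/c_i\ge 1/(1+t)$, and in case \textup{(b)} the power equals $1$ while $c_{i-1}/c_i\ge 1$.

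To prove \textup{(a)}/\textup{(b)} I would use two facts: the recursion $j(v)=r(v)+j(\operatorname{parent}(v))$ with $j(\text{root})=0$, where $r(v)$ is the number of siblings of $v$ of label larger than $\operatorname{label}(v)$ (this is contained in the proof of Lemma~\ref{alpha}, and in particular $j$ is non-decreasing along root-to-leaf paths); and the fact that the NFS, when it makes a node active, visits all of its still-unvisited neighbours consecutively and in decreasing order of label. If $v_i$ is not the largest-label child of its parent $p$, then $v_{i-1}$ is the child of $p$ immediately preceding $v_i$ in the batch of $p$'s children, so $\operatorname{label}(v_{i-1})>\operatorname{label}(v_i)$ and $r(v_i)=r(v_{i-1})+1$, giving \textup{(b)}. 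If $v_i$ is the largest-label child of $p$, then $v_i$ is visited one step after $p$ becomes active; if $p$ became active the instant it was visited (that is, $p$ is the root or the smallest-label child of its parent) then $v_{i-1}=p$ and $j_i=j(p)=j_{i-1}$, which is \textup{(a)}; otherwise $p$ is the $b$-th smallest-label child of its parent for some $b\ge 2$, the NFS reaches $p$ by backtracking just after completing the subtree of the sibling $z$ of $p$ of largest label below $\operatorname{label}(p)$, so $v_{i-1}$ lies in $z$'s subtree and hence $j_{i-1}=j(v_{i-1})\ge j(z)=r(z)+j(\operatorname{parent}(p))>r(p)+j(\operatorname{parent}(p))=j(p)=j_i$ (using $r(z)=r(p)+1$, since $p$ is one of the larger-label siblings of $z$), which is again \textup{(a)}. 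The delicate step is this last one: identifying precisely which node $v_{i-1}$ is after a backtrack and checking that its cane count exceeds that of $p$; the remaining cases are routine bookkeeping with the recursion for $j$. Granting \textup{(a)}/\textup{(b)}, the displayed identity finishes the labeled case, and the plane case follows as explained above.
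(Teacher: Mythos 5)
Your proposal is correct and follows essentially the same route as the paper: both arguments get $x_i\ge 1$ from the chain bounds and then case-analyze the step from the $(i-1)$-st to the $i$-th NFS-visited node (smaller-label sibling, largest-label child, or backtrack), your conditions (a)/(b) corresponding exactly to the paper's three cases. The only cosmetic differences are that you verify $j_i<j_{i-1}$ in the backtrack case via the recursion $j(v)=r(v)+j(\mathrm{parent}(v))$ rather than the paper's direct cane-path correspondence, and you deduce the plane-tree case from the labeled one via the decomposition of $\pD_T(t)$ into the simplices $\pS_{T'}(t)$, where the paper simply notes the argument is almost identical.
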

\begin{proof}
 We only prove the statement for a labeled tree $T$ as the proof for a plane tree is almost identical. By construction, we have $x_i/(1+t)^j \geq 1$ for each $i$ and some $j \geq 0$, so $x_i \geq (1+t)^j \geq 1$. Also by construction, $1 \leq x_1 \leq 1+t$. Assume that $v$ is the $(i-1)$-st visited node and $v'$ the $i$-th visited node, where $i \geq 2$, and that their coordinates are $x_{i-1}/(1+t)^j$ and $x_{i}/(1+t)^{j'}$. We have several possibilities:
 \begin{itemize}
  \item $v'$ and $v$ have the same parent and $v'$ has a smaller label; in this case $j' = j+1$, $x_{i}/(1+t)^{j+1} \leq x_{i-1}/(1+t)^j$ and $x_i \leq (1+t)x_{i-1}$;
  \item $v'$ is the child of $v$ with the largest label; in this case, $j' = j$, so both $1 \leq x_{i-1}/(1+t)^{j} \leq 1+t$ and $1 \leq x_{i}/(1+t)^{j} \leq 1+t$ hold; that means that $x_i \leq  (1+t)^{j+1} \leq (1+t)x_{i-1}$;
  \item the unique path from $v$ to $v'$ goes up at least once, then down right, and then down to the child with the largest label; in this case, every cane path starting in $v'$ and ending in $w$ has a corresponding cane path starting in $v$ and ending in $w$, so $j' \leq j$, and we have both $1 \leq x_{i-1}/(1+t)^{j} \leq 1+t$ and $1 \leq x_{i}/(1+t)^{j'} \leq 1+t$, so $x_i \leq (1+t)^{j'+1} \leq (1+t)^{j+1} \leq (1+t)x_{i-1}$.
 \end{itemize}
This finishes the proof.
\end{proof}

The $t$-Cayley polytope $\pC_n(t)$ consists of all points $(x_1,\ldots,x_n)$ for which $1 \leq x_1 \leq 1 + t$ and $1 \leq x_i \leq (1 + t)x_{i-1}$ for $i = 2,\ldots,n$. The main idea of the proof of Theorems~\ref{thm6} and~\ref{thm7} is to divide these inequalities into ``narrower'' inequalities.  We state this precisely in the following example, and then in full generality.

\begin{exa}{\rm \small
Since $1 \leq x_2 \leq (1 + t)x_1$ and $(1 + t)x_1 \geq 1 + t$, we have either $1 \leq x_2 \leq 1 + t$ or $1 + t \leq x_2 \leq (1 + t) x_1$. If $1 \leq x_2 \leq 1 + t$, then either $1 \leq x_3 \leq 1 + t$ or $1 + t \leq x_3 \leq (1 + t) x_2$. On the other hand, if $1 + t \leq x_2 \leq (1 + t) x_1$, then $(1 + t)x_2 \geq (1 + t)^2$, so we have $1 \leq x_3 \leq 1 + t$, $1 + t \leq x_3 \leq (1 + t)^2$ or $(1 + t)^2 \leq x_3 \leq (1 + t)x_2$. The following table presents all such choices for $n = 4$.

\renewcommand{\arraystretch}{1.2}
$$
\begin{array}{|c|c|c|c|}
 \hline
 1 \leq x_1 \leq 1 \hspace{-2pt} + \hspace{-2pt} t & 1 \leq x_2 \leq 1 \hspace{-2pt} + \hspace{-2pt} t & 1 \leq x_3 \leq 1 \hspace{-2pt} + \hspace{-2pt} t & 1 \leq x_4 \leq 1 \hspace{-2pt} + \hspace{-2pt} t \\
 \cline{4-4}
 & & & 1 \hspace{-2pt} + \hspace{-2pt} t \leq x_4 \leq (1 \hspace{-2pt} + \hspace{-2pt} t)x_3 \\
\cline{3-4}
 & & 1 \hspace{-2pt} + \hspace{-2pt} t \leq x_3 \leq (1 \hspace{-2pt} + \hspace{-2pt} t)x_2 & 1 \leq x_4 \leq 1 \hspace{-2pt} + \hspace{-2pt} t \\
\cline{4-4}
 & &  & 1 \hspace{-2pt} + \hspace{-2pt} t \leq x_4 \leq (1 \hspace{-2pt} + \hspace{-2pt} t)^2 \\
\cline{4-4}
& &  & (1 \hspace{-2pt} + \hspace{-2pt} t)^2 \leq x_4 \leq (1 \hspace{-2pt} + \hspace{-2pt} t)x_3 \\
\cline{2-4}
& 1 \hspace{-2pt} + \hspace{-2pt} t \leq x_2 \leq (1 \hspace{-2pt} + \hspace{-2pt} t)x_1 & 1 \leq x_3 \leq 1 \hspace{-2pt} + \hspace{-2pt} t & 1 \leq x_4 \leq 1 \hspace{-2pt} + \hspace{-2pt} t \\
\cline{4-4}
& & & 1 \hspace{-2pt} + \hspace{-2pt} t \leq x_4 \leq (1 \hspace{-2pt} + \hspace{-2pt} t)x_3 \\
\cline{3-4}
& & 1 \hspace{-2pt} + \hspace{-2pt} t \leq x_3 \leq (1 \hspace{-2pt} + \hspace{-2pt} t)^2 & 1 \leq x_4 \leq 1 \hspace{-2pt} + \hspace{-2pt} t \\
\cline{4-4}
& & & 1 \hspace{-2pt} + \hspace{-2pt} t \leq x_4 \leq (1 \hspace{-2pt} + \hspace{-2pt} t)^2 \\
\cline{4-4}
& & & (1 \hspace{-2pt} + \hspace{-2pt} t)^2 \leq x_4 \leq (1 \hspace{-2pt} + \hspace{-2pt} t)x_3 \\
\cline{3-4}
& & (1 \hspace{-2pt} + \hspace{-2pt} t)^2 \leq x_3 \leq (1 \hspace{-2pt} + \hspace{-2pt} t)x_2 & 1 \leq x_4 \leq 1 \hspace{-2pt} + \hspace{-2pt} t \\
\cline{4-4}
& & &  1 \hspace{-2pt} + \hspace{-2pt} t \leq x_4 \leq (1 \hspace{-2pt} + \hspace{-2pt} t)^2 \\
\cline{4-4}
& & & (1 \hspace{-2pt} + \hspace{-2pt} t)^2 \leq x_4 \leq (1 \hspace{-2pt} + \hspace{-2pt} t)^3 \\
\cline{4-4}
& & & (1 \hspace{-2pt} + \hspace{-2pt} t)^3 \leq x_4 \leq (1 \hspace{-2pt} + \hspace{-2pt} t)x_3 \\
\hline

\end{array}$$
}
\end{exa}

\begin{lemma}
 The $t$-Cayley polytope $\pC_n(t)$ can be subdivided into polytopes defined by inequalities for variables $x_1,\ldots,x_n$ so that:
 \begin{itemize}
  \item[I1] the inequalities for $x_1$ are $1 \leq x_1 \leq 1+t$;
  \item[I2] the inequalities for each $x_i$ are either $(1+t)^{j_i} \leq x_i \leq (1+t)^{j_i+1}$ or $(1+t)^{j_i} \leq x_i \leq (1+t)x_{i-1}$ (only if $i \geq 2$) for some $j_i \geq 0$;
  \item[I3] for $i \geq 2$, we have  $j_i \leq j_{i - 1} + 1$, and $j_i = j_{i-1} + 1$ if and only if the inequalities for $x_i$ are $(1+t)^{j_i} \leq x_i \leq (1+t)x_{i-1}$.
  \end{itemize}
\end{lemma}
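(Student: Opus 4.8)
The plan is to construct the subdivision by slicing $\pC_n(t)$ one coordinate at a time with the hyperplanes $x_i=(1+t)^k$, and to prove by induction on $i$ that after the $i$-th round every full-dimensional cell is cut out by inequalities obeying I1--I3 in the variables $x_1,\ldots,x_i$, while the original inequalities $1\le x_\ell\le (1+t)x_{\ell-1}$ survive, untouched, for $\ell>i$. Alongside this I will carry an auxiliary invariant: on every cell one has $(1+t)^{j_i}\le x_i\le (1+t)^{j_i+1}$. In the box case this is literally one of the defining inequalities; in the capped case it follows from $x_i\le (1+t)x_{i-1}$ and the invariant for $x_{i-1}$, since then $x_i\le (1+t)\cdot(1+t)^{j_{i-1}+1}=(1+t)^{j_i+1}$. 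This invariant is precisely what makes the slicing at the next step describable by finitely many powers of $1+t$.

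For the base case $i=1$, the sole constraint on $x_1$ is $1\le x_1\le (1+t)x_0=1+t$, which is I1; we record it as the box case with $j_1=0$, and the invariant is immediate. For the inductive step, fix a full-dimensional cell $P'$ produced by round $i-1$. By the invariant, $x_{i-1}\ge (1+t)^{j_{i-1}}$ on $P'$, hence $(1+t)x_{i-1}\ge (1+t)^{j_{i-1}+1}$, so the surviving constraint $1\le x_i\le (1+t)x_{i-1}$ confines $x_i$ to an interval $[1,(1+t)x_{i-1}]$ containing all of $[1,(1+t)^{j_{i-1}+1}]$. Cutting $P'$ with the hyperplanes $x_i=(1+t),(1+t)^2,\ldots,(1+t)^{j_{i-1}+1}$ therefore splits it into the cells
\[
(1+t)^{k-1}\le x_i\le (1+t)^{k}\ \ (1\le k\le j_{i-1}+1),\qquad (1+t)^{j_{i-1}+1}\le x_i\le (1+t)x_{i-1},
\]
all full-dimensional because on the interior of $P'$ the invariant's upper bound is strict, so $(1+t)x_{i-1}>(1+t)^{j_{i-1}+1}$. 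The first family are box cells with $j_i=k-1\in\{0,\ldots,j_{i-1}\}$, so $j_i\le j_{i-1}$; the last is the capped cell with $j_i=j_{i-1}+1$ and upper bound $(1+t)x_{i-1}$. This is exactly I2, and I3 holds since $j_i=j_{i-1}+1$ occurs precisely for the capped cell. Hyperplanes $x_i=(1+t)^k$ with $k>j_{i-1}+1$ miss the interior of $P'$, since there $x_i<(1+t)x_{i-1}<(1+t)^{j_{i-1}+2}$, so one may perform all cuts $x_i=(1+t)^k$, $1\le k\le i-1$, globally on $\pC_n(t)$.

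Because each round intersects the current polytopal subdivision with a finite affine hyperplane arrangement, the outcome is again a polytopal subdivision, with cells meeting along common faces; boundedness of each cell follows from the bounds $(1+t)^{j_i}\le x_i\le (1+t)^{i}$ established inductively (using $j_i\le i-1$), and the cells cover $\pC_n(t)$ since no point is ever discarded. The delicate part of the argument is the bookkeeping that keeps the box and capped cases straight and tracks $j_i$ against $j_{i-1}$ -- in particular the equivalence in I3 and the bound $j_i\le j_{i-1}$ in the box case -- together with maintaining the invariant $x_{i-1}\le (1+t)^{j_{i-1}+1}$ that underlies the whole slicing scheme; everything else is routine. (The structural description I1--I3 is exactly the data that will be matched against plane trees in Theorem~\ref{thm7}.)
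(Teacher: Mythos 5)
Your proof is correct and takes essentially the same inductive approach as the paper's: slice the $i$-th coordinate by the hyperplanes $x_i=(1+t)^k$ and use the key observation that $(1+t)x_{i-1}\geq(1+t)^{j_{i-1}+1}$ (coming from the lower bound $x_{i-1}\geq(1+t)^{j_{i-1}}$) to see that these cuts, together with the cap $(1+t)^{j_{i-1}+1}\leq x_i\leq(1+t)x_{i-1}$, exhaust the range of $x_i$. The paper is terser --- it argues coverage by showing each point falls into one of the I1--I3 cells and asserts the volume-zero overlaps as clear --- while you make the hyperplane arrangement and the auxiliary upper-bound invariant $x_i\leq(1+t)^{j_i+1}$ explicit, but the substance is the same.
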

\begin{proof}
 It is clear that the polytopes defined by inequalities I1--I3 have volume $0$ intersections. Let us prove by induction that each point of $\pC_n(t)$ lies in one of the polytopes.
 For $n = 1$, this is clear, assume that the statement holds for $n-1$. For a point $(x_1,\ldots,x_n) \in \pC_n(t)$, we have $(x_1,\ldots,x_{n-1}) \in \pC_{n-1}(t)$, and $1 \leq x_n \leq (1+t)x_{n-1}$. By induction, $(1+t)^{j_{n-1}} \leq x_{n-1} \leq (1+t)^{j_{n-1}+1}$ or $(1+t)^{j_{n-1}} \leq x_{n-1} \leq (1+t)x_{n-2}$. Note that $(1+t)x_{n-1} \geq (1+t)^{j_{n-1}+1}$. Thus at least one (and at most two) of the statements $1 \leq x_n \leq 1+t$, $1+t \leq x_n \leq (1+t)^2,\ldots,(1+t)^{j_{n-1}} \leq x_n \leq (1+t)^{j_{n-1}+1}$, $(1+t)^{j_{n-1}+1} \leq x_n \leq (1+t)x_{n-1}$ is true. in other words, we can either choose $0 \leq j_n \leq j_{n-1}$ so that $(1+t)^{j_n} \leq x_n \leq (1+t)^{j_n+1}$, or we have $(1+t)^{j_n} \leq x_n \leq (1+t)x_n$ for $j_n = j_{n-1}+1$. This finishes the inductive step.
\end{proof}

We claim the the polytopes constructed in the lemma are precisely the polytopes $\pD_T(t)$ from Section~\ref{t}. So say that we have inequalities satisfying the conditions I1-I3 and defining a polytope $\s P$. Our goal is to construct the unique plane tree $T$ satisfying $\pD_T(t) = \s P$.

\smallskip

Assume that $k$, $1 \leq k \leq n$, is the largest integer so that the inequalities for $x_i$, $i=2,\ldots,k$, are of the form $(1+t)^{i-1} \leq x_i \leq (1+t)x_{i-1}$. In particular, the inequalities for $x_{k+1}$ are \emph{not} of the form $(1+t)^k \leq x_{k+1} \leq (1+t)x_k$, but instead $(1+t)^{j_{k+1}} \leq x_{k+1} \leq (1+t)^{j_{k+1}+1}$ for some $j_{k+1}$, $0 \leq j_{k+1} \leq k-1$.

\smallskip

There exist unique integers $a_1,\ldots,a_k \geq 1$, $a_1+\ldots+a_k = n-k$, satisfying the following properties:
\begin{itemize}
 \item $j_{k+1},\ldots,j_{k + a_1} \geq k-1$, $j_{k+a_1+1} < k-1$;
 \item $j_{k+a_1+1},\ldots,j_{k+a_1+a_2} \geq k-2$, $j_{k+a_1+a_2+1} < k-2$;
 \item $j_{k+a_1+a_2+1},\ldots,j_{k+a_1+a_2+a_3} \geq k-3$, $j_{a_1+a_2+a_3+1} < k-3$;
 \item etc.
\end{itemize}
Note that if $a_1 \geq 1$, then $j_{k+1} = k-1$, if $a_2 \geq 1$, then $j_{k+a_1+1} = k-2$, etc.

\smallskip

In other words, among the inequalities for $x_{k+1},\ldots,x_n$, the first $a_1$ inequalities have at least $(1+t)^{k-1}$ on the left, the next $a_2$ inequalities have at least $(1+t)^{k-2}$ on the left, etc. Say that among the inequalities for $x_{k+1},\ldots,x_n$, the first $a_1$ inequalities define the polytope $(1+t)^{k-1}\s P_1$, the next $a_2$ inequalities define the polytope $(1+t)^{k-2}\s P_2$, etc. By induction, the polytopes $\s P_1,\ldots,\s P_k$ are of the form $\pD_{T_1},\ldots,\pD_{T_k}$ for some plane trees $T_1,\ldots,T_k$ on $a_1+1,a_2+1,\ldots,a_k+1$ nodes. Define the tree $T$ by taking a root with $k$ successors and subtrees $T_1,\ldots,T_k$.

\begin{exa} {\rm \small
 Say that
 $$\s P = \left\{\begin{array}{lll}
             (x_1,\ldots,x_{11}) \colon & 1 \leq x_ 1 \leq 1+t, & 1+t \leq x_ 2 \leq (1+t)\ts x_ 1, \\
	  (1+t)^2 \leq x_3 \leq (1+t)\ts x_ 2, & (1+t)^2 \leq x_4 \leq (1+t)^3, & (1+t)^3 \leq x_ 5 \leq (1+t)\ts x_4, \\
           (1+t)^3 \leq x_ 6 \leq (1+t)^4, & (1+t)^3 \leq x_ 7 \leq (1+t)^4, & (1+t)^4 \leq x_8 \leq (1+t)\ts x_ 7, \\
            1+t \leq x_ 9 \leq (1+t)^2, &(1+t)^2 \leq x_{10} \leq (1+t)\ts x_ 9,&1 \leq x_{11} \leq 1+t
            \end{array}\right\}.$$
 We have $k = 3$ and $a_1 = 5$, $a_2 = 2$, $a_3 = 1$. Furthermore,
$$\begin{array}{ll}
\s P_1 & =\left\{\begin{array}{lll}
		    (x_1,\ldots,x_5) \colon & 1 \leq x_1 \leq 1+t, & 1+t \leq x_2 \leq (1+t)\ts x_1, \\
                    1+t \leq x_3 \leq (1+t)^2, & 1+t \leq x_4 \leq (1+t)^2, & (1+t)^2 \leq x_5 \leq (1+t)\ts x_4
                   \end{array} \right\},\\

\s P_2 & = \set{(x_1,x_2) \colon \quad 1 \leq x_1 \leq 1+t, \quad  1+t \leq x_2 \leq (1+t)x_1},\\
\s P_3 & = \set{x_1 \colon  \quad 1 \leq x_1 \leq 1+t}.
\end{array}$$

\noindent
The corresponding subtrees $T_1,T_2,T_3$ of  the tree $T$ is shown with full lines in Figure~\ref{fig13}.
\begin{figure}[ht!]
 \begin{center}
   \includegraphics[height=2.5cm]{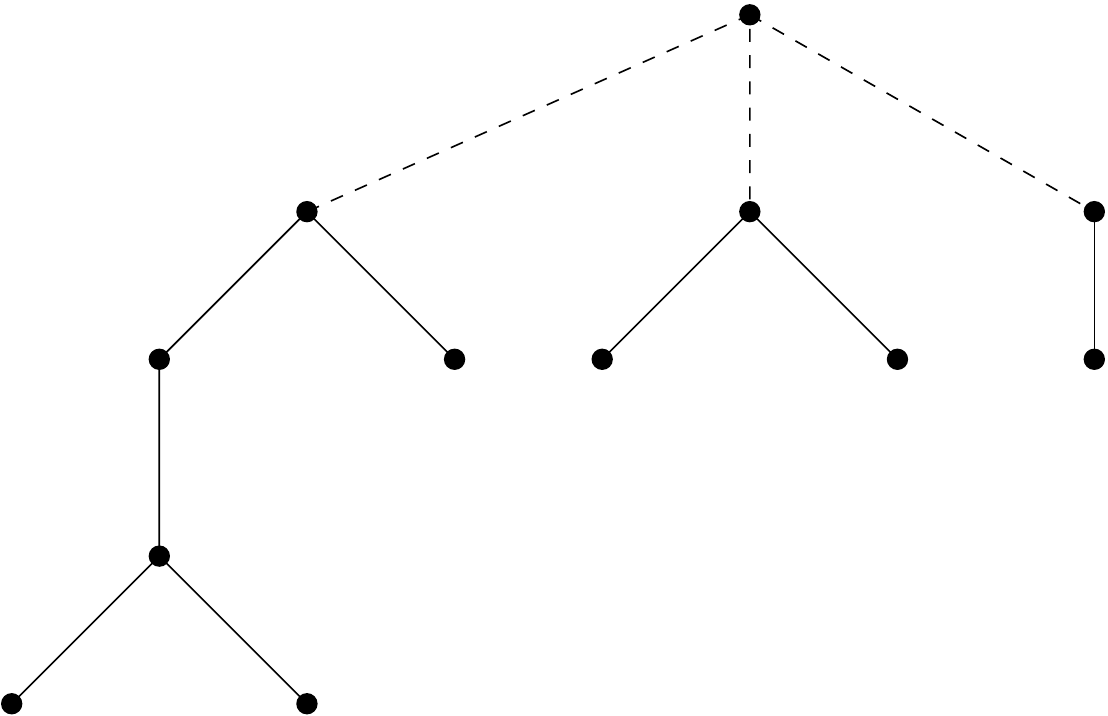}
   \caption{The plane tree corresponding to a subpolytope.}
   \label{fig13}
 \end{center}
\end{figure}
}
\end{exa}

\begin{lemma}
 We have $\pD_T(t) = \s P$, and $T$ is the only tree with this property.
\end{lemma}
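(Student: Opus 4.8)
The plan is to prove both assertions at once by induction on the number of nodes of $T$ (equivalently on $n$), unwinding the recursive definition of $\pD_T(t)$ in parallel with the recursive construction of $T$ from $\s P$ given above; the base case of a single node is vacuous.

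The first step is to record how the inequalities defining $\pD_T(t)$ split according to the nodes of $T$. Writing $T$ as a root of degree $k$ whose successors carry subtrees $T_1,\dots,T_k$ (on $a_1+1,\dots,a_k+1$ nodes respectively), the root's successors are the first $k$ nodes visited by the NFS, with coordinates $x_1,\,x_2/(1+t),\,\dots,\,x_k/(1+t)^{k-1}$, so the root contributes exactly the inequalities $1\le x_1\le 1+t$ and $(1+t)^{m-1}\le x_m\le (1+t)x_{m-1}$ for $2\le m\le k$. The successor carrying $T_\ell$ sits at level $k-\ell$, and the portion of the NFS of $T$ that traverses its subtree is the NFS of $T_\ell$, with all positions translated into a consecutive block of length $a_\ell$ and all levels raised by $k-\ell$; hence $T_\ell$ contributes precisely the system defining $(1+t)^{\,k-\ell}\,\pD_{T_\ell}(t)$ in that block of variables. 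The point needing care here is this localization of the traversal to a subtree, which follows from the description of the NFS, together with the routine bookkeeping for degenerate successors (leaves, where $a_\ell=0$ and no inequality is contributed).

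Granting this, the identity $\pD_T(t)=\s P$ falls out by matching the two descriptions block by block. By the choice of $k$ in the construction of $T$, the first $k$ inequalities of $\s P$ are exactly $1\le x_1\le 1+t$ together with the deepening inequalities for $x_2,\dots,x_k$ — that is, the root's contribution to $\pD_T(t)$. By the definition of the integers $a_\ell$ and of the polytopes $\s P_\ell$, the $\ell$-th group of inequalities of $\s P$ is the defining system of $(1+t)^{\,k-\ell}\s P_\ell$; since $\s P_\ell$ satisfies I1--I3 on its $a_\ell$ variables, the induction hypothesis gives $\s P_\ell=\pD_{T_\ell}(t)$, and by the first step this group agrees with the contribution of $T_\ell$ to $\pD_T(t)$.

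For uniqueness, suppose $\pD_{T'}(t)=\s P$ for some plane tree $T'$ on $n+1$ nodes. Applying the first step to $T'$, the length of the initial run of deepening inequalities in the system $\pD_{T'}(t)$ equals the degree of the root of $T'$; as that system is $\s P$, whose initial deepening run has length $k$, the root of $T'$ has degree $k$. The block sizes $a_1,\dots,a_k$ and the shifted subsystems $\s P_\ell$ depend on $\s P$ alone, so each subtree $T'_\ell$ of $T'$ satisfies $\pD_{T'_\ell}(t)=\s P_\ell=\pD_{T_\ell}(t)$, and the induction hypothesis forces $T'_\ell=T_\ell$, whence $T'=T$. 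The step I expect to be the main obstacle is the localization step above — pinning down exactly which positions and which levels the nodes of each subtree receive inside $\pD_T(t)$, including the degenerate leaf cases; once that is secured, the matching with $\s P$ and the uniqueness argument are just bookkeeping.
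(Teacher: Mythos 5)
Your proposal is correct and follows essentially the same route as the paper: the root of $T$ accounts for the first $k$ inequalities of $\s P$, the subtrees account blockwise for the remaining ones via induction (the localization of the NFS to subtrees that you flag is exactly the fact the paper uses implicitly), and uniqueness is obtained by recovering the root degree and recursing, much as the paper recovers the degree sequence. The only caveat, shared with the paper's own one-line uniqueness argument, is that $\pD_{T'}(t)=\s P$ is an equality of polytopes rather than of inequality systems, so one should add that the level data (hence the tree) can be read off from the polytope itself, e.g.\ as coordinatewise minima.
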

\begin{proof}
 The root has $k$ successors, hence the inequalities for $x_1,\ldots,x_k$ determined by $T$ are $1 \leq x_k/(1+t)^{k-1} \leq x_{k-1}/(1+t)^{k-2} \leq \ldots \leq x_2/(1+t) \leq x_1 \leq 1+t$, which is equivalent to $1 \leq x_1 \leq 1+t$, $(1+t)^{i-1} \leq x_i \leq (1+t)x_{i-1}$ for $i = 2,\ldots,k$. By induction, $\pD_{T_i}(t) = P_i$, and so $\pD_T(t) = \s P$. For the second part,
 if $\pD_T(t) = \pD_{T'}(t)$ for plane trees $T$ and $T'$, then the degree sequences of $T$ and $T'$ have to be the same, and so $T = T'$.
\end{proof}

The lemma shows that $\set{\pD_T(t) \colon T \mbox{ a plane tree on }n+1 \mbox{ nodes}}$ is a subdivision of the polytope $\pC_n(t)$.  This implies the second part of Theorem~\ref{thm7}. \hfill $\sq$

\smallskip


Recall that each $\pD_T(t)$ is subdivided into $\binom{n}{d_1,d_2,\ldots}$ simplices $\pS_{T'}$ enumerated by labeled trees $T'$ which become $T$ if we erase the labels. This completes the proof of the second part of Theorem~\ref{thm6}.   \hfill  $\sq$

\subsection{Subdivisions of the $t$-Gayley polytope}

\begin{proof}[Proof of the first part of Theorem~\ref{thm8}]
Consider a transformation
$$
A \colon x_{i_k} \mapsto t(x_{i_k}(1+t)^{j_k}+x_{l_k})
$$
for $k$ corresponding to nodes that do not have the maximal label in their connected component.
Then $\pS_F(t)$ is mapped into the standard simplex with volume $1/n!$, and $A$ is the
composition of a linear transformation with an upper triangular matrix in the standard basis
and a translation. The determinant of the linear transformation is
$$
t^{n+1-k(F)} (1+t)^{\alpha(F)} = t^{|E(F)|} (1+t)^{\alpha(F)}.
$$
On the other hand, if the components of $F$ are trees $T_1,\ldots,T_m$, then
$$\sum_{\Phi(G) = F} t^{|E(G)|} = \prod_{j=1}^m \sum_{\Phi(G_j) = T_j} t^{|E(G_j)|}
= \prod_{j=1}^m t^{|E(T_i)|} (1+t)^{\alpha(T_j)} = t^{|E(F)|} (1+t)^{\alpha(F)},
$$
as desired.
\end{proof}

To prove the first part of Theorem~\ref{thm9}, we need the following elementary lemma.

\begin{lemma} \label{cone}
 If $\s P \subset \R^n$ has volume, then $\cone(\s P)$ has volume, and
 $$\vol(\cone(\s P)) = \frac 1{n+1} \vol(\s P).$$
\end{lemma}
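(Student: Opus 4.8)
\textbf{Proof proposal for Lemma~\ref{cone}.} The plan is to compute $\vol(\cone(\s P))$ by Fubini's theorem, slicing along the $x_0$-axis. First I would record the shape of the slices: by the very definition of $\cone(\s P)$, for each fixed $x_0\in[0,1]$ the slice
$$
\set{(x_1,\ldots,x_n)\colon (x_0,x_1,\ldots,x_n)\in\cone(\s P)}
$$
is exactly the dilate $x_0\s P$, and dilation by a factor $a\geq 0$ in $\R^n$ multiplies $n$-dimensional volume by $a^n$, so this slice has volume $x_0^n\vol(\s P)$.

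Next I would dispose of the measurability ("has volume") claim. Consider the map $\Phi\colon(0,1]\times\R^n\to(0,1]\times\R^n$, $\Phi(x_0,y)=(x_0,x_0y)$; it is a diffeomorphism, with inverse $(x_0,z)\mapsto(x_0,z/x_0)$, and its differential is block lower-triangular with diagonal blocks $1$ and $x_0 I_n$, hence has Jacobian determinant $x_0^n>0$. Since $\s P$ has volume, the product $(0,1]\times\s P$ has volume, and therefore so does its image $\Phi\bigl((0,1]\times\s P\bigr)=\cone(\s P)\setminus\set{0}$; removing the single apex point changes nothing, so $\cone(\s P)$ has volume.

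Finally, applying the change-of-variables formula along $\Phi$ (or, equivalently, Fubini together with the slice computation above) gives
$$
\vol(\cone(\s P))=\int_{(0,1]\times\s P} x_0^n\,dx_0\,dy=\vol(\s P)\int_0^1 x_0^n\,dx_0=\frac{1}{n+1}\,\vol(\s P),
$$
which is the assertion. There is no real obstacle here: the statement is a routine application of Fubini and the scaling behaviour of Lebesgue measure, and the only point deserving a word of care is the measurability of $\cone(\s P)$, which is handled by exhibiting it (up to a point) as the image of a product of measurable sets under the diffeomorphism $\Phi$.
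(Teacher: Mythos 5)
The paper supplies no proof of this lemma; the authors label it ``the following elementary lemma'' and invoke it directly in the proof of the first part of Theorem~\ref{thm9}. Your Fubini/slicing argument is correct and is the standard proof the authors evidently considered too routine to record: the slice at height $x_0$ is $x_0\s P$ with $n$-volume $x_0^n\vol(\s P)$, the diffeomorphism $\Phi(x_0,y)=(x_0,x_0 y)$ handles measurability, and $\int_0^1 x_0^n\,dx_0 = \tfrac{1}{n+1}$ gives the stated factor.
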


\begin{proof}[Proof of the first part of Theorem~\ref{thm9}]
By the definition of $\pD_F(t)$, induction on the number of components of a plane forest $F$, and Lemma~\ref{cone}, we have
$$n!\ts\vol(\pD_F(t)) = \frac{n!}{\prod_{j=2}^m(a_j+\ldots+a_m)} \prod_{j=1}^m \vol(\pD_{T_j}(t)),$$
where the components of $F$ are $T_1,\ldots,T_m$ and $T_j$ has $a_j$ nodes. If $m = 2$, the reduced degree sequence of $T_1$ is $(d_1,\ldots,d_{a_1-1})$ and the reduced degree sequence of $T_2$ is $(d_{a_1},\ldots,d_{a_1+a_2-2})$, then
$$\alpha(T_1) + \alpha(T_2) = \binom{a_1}2 - \sum_{i=1}^{a_1-1} i d_i + \binom{a_2}2 - \sum_{i=1}^{a_2-1}i d_{i+a_1-1} =$$
$$=\binom{a_1}2 - \sum_{i=1}^{a_1-1} i d_i + \binom{a_2}2 - \sum_{i=a_1}^{a_1+a_2-2}(i-a_1+1) d_{i} = $$
$$=\binom{a_1}2 + \binom{a_2}2  + (a_1-1)(a_2-1) - \sum_{i=1}^{a_1+a_2-2} i d_i = \binom{a_1+a_2-1}2 - \sum_{i=1}^{a_1+a_2-2} i d_i.$$
The proof of
$$\alpha(T_1) + \ldots + \alpha(T_m) = \binom{n+2-m}2 - \sum_{i=1}^{n+1-m} i d_i$$
for $m \geq 3$ is a simple extension of this argument. Since $\sum_i d_i = |E(F)|$, this finishes the proof.
\end{proof}

\begin{lemma}
 For a labeled $($respectively, plane$)$ forest $F$ on $n+1$ vertices, $\s S_F(t) \subseteq \s G_n(t)$ $($respectively, $\s D_F(t) \subseteq \s G_n(t) \ts)$. \qed
\end{lemma}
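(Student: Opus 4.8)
The plan is to follow the proof of Lemma~\ref{lemma1} and verify directly that every point of $\pS_F(t)$ (respectively $\pD_F(t)$) satisfies the inequalities $0\le x_1\le 1+t$ and $0\le x_i\le(1+t)\ts x_{i-1}$, $i=2,\dots,n$, defining $\pG_n(t)$. Throughout I will use $x_p$ for the ambient coordinate attached to the node visited $p$-th by the NFS, so that a root $w$ of a component, sitting at NFS position $l$, carries $c(w,F;t)=t\ts x_l$ (with $x_0=1$ for the global root $n+1$), while a non-root node $v$ at NFS position $p$ in the component rooted at position $l$ carries $c(v,F;t)=x_p/(1+t)^{j}-x_l$, where $j$ is the number of cane paths of $F$ starting at $v$.

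First I would record two consequences of the defining chain $0\le c(1,F;t)\le\cdots\le c(n+1,F;t)=t$ of $\pS_F(t)$. Since each component is rooted at its maximal-label node, the chain gives $c(v,F;t)\le c(r,F;t)$ for a non-root $v$, where $r$ is the root of $v$'s component; together with $c(v,F;t)\ge 0$ this yields the key estimate $x_l\le x_p/(1+t)^{j}\le(1+t)\ts x_l$, which is the forest analogue of the bound $x_p/(1+t)^{j}\in[1,1+t]$ used in the Cayley case. In particular every root coordinate satisfies $x_l\ge 0$ (from $t\ts x_l\ge 0$), hence $x_p\ge 0$ for every node, which settles all the lower bounds; and $x_1\le 1+t$ because position $1$ is either the largest-labelled child of the global root (so $j=0$ and $x_1\le(1+t)x_0=1+t$) or the root of a second component (so $x_1\le 1$). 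Second, ordering the component roots $w_1,\dots,w_m$ by decreasing maximal label and applying the chain to their labels gives $t=c(w_1)\ge c(w_2)\ge\cdots\ge c(w_m)\ge 0$, i.e.\ $1=x_{l_1}\ge x_{l_2}\ge\cdots\ge x_{l_m}\ge 0$, where $l_j$ is the NFS position of $w_j$.

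Next I would run the case analysis for $x_i\le(1+t)\ts x_{i-1}$, $i\ge 2$, according to how the nodes at NFS positions $i-1$ and $i$ sit in $F$. If they lie in the same component (rooted at position $l$), the subcases are exactly those of Lemma~\ref{lemma1}: node $i$ a smaller-labelled sibling of a leaf node $i-1$ (then $j_i=j_{i-1}+1$, and the chain inequality $c(\text{node }i)\le c(\text{node }i-1)$ gives $x_i/(1+t)^{j_i}\le x_{i-1}/(1+t)^{j_{i-1}}$); node $i$ the largest-labelled child of node $i-1$ (then $j_i=j_{i-1}$); or the path from node $i-1$ up, then down-right, then down to largest children (then $j_i\le j_{i-1}$); in the last two one combines $x_i\le(1+t)^{j_i+1}x_l$ with $x_{i-1}\ge(1+t)^{j_{i-1}}x_l$ from the key estimate. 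The genuinely new case is when position $i$ opens a fresh component, say the $j$-th root $w_j$: then position $i-1$ is the last node of component $j-1$, so $x_{i-1}\ge x_{l_{j-1}}$ (by the key estimate, or trivially if that component is a single node), and combined with $x_{l_{j-1}}\ge x_{l_j}=x_i$ from the root ordering this gives $x_i\le x_{i-1}\le(1+t)x_{i-1}$. This proves $\pS_F(t)\subseteq\pG_n(t)$. For $\pD_F(t)$ one can repeat the same argument using the inequalities of Proposition~\ref{prop2} (which are precisely the sibling- and root-comparisons of the chain), or, more cleanly, induct on the number of components through $\pD_F(t)=\pD_{T_1}(t)\times\cone\bigl(\pD_{F'}(t)\bigr)$, where $F'$ consists of the remaining components: by Lemma~\ref{lemma1} we have $\pD_{T_1}(t)\subseteq\pC_{a_1-1}(t)\subseteq\pG_{a_1-1}(t)$ with all of its coordinates $\ge 1$; by the inductive hypothesis and the elementary fact that $\s P\subseteq\pG_k(t)$ implies $\cone(\s P)\subseteq\pG_{k+1}(t)$, we get $\cone(\pD_{F'}(t))\subseteq\pG(t)$ with leading coordinate in $[0,1]$; and the single cross-block inequality of $\pG_n(t)$, namely $0\le x\le(1+t)\ts y$ with $y\ge 1$ the last coordinate of the $\pD_{T_1}(t)$ block and $x\in[0,1]$ the cone parameter, then holds automatically.

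The step I expect to be the real work is this ``new component'' transition together with the bookkeeping of the shift terms $-x_l$: one must check that the chain of $\pS_F(t)$, which orders coordinates by \emph{label}, genuinely supplies the inequality needed in each \emph{geometric} configuration of consecutive NFS positions, and that the correct replacement for the Cayley-case bound ``$x_p/(1+t)^{j}\le 1+t$'' is ``$x_p/(1+t)^{j}\le(1+t)\ts x_l$'', which is available precisely because every component is rooted at its maximal-label node. Once these two points are in place, the remaining subcases mirror Lemma~\ref{lemma1} verbatim, and the $\cone$/product assembly for $\pD_F(t)$ is routine.
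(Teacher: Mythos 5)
Your proposal is correct and follows exactly the route the paper intends: the paper omits this proof as ``very similar to the proof of Lemma~\ref{lemma1}'', and you carry out precisely that adaptation, correctly supplying the forest-specific ingredients --- the estimate $x_l \leq x_p/(1+t)^{j} \leq (1+t)\,x_l$ coming from comparing a node with its component's root, the ordering of the root coordinates $1 = x_{l_1} \geq x_{l_2} \geq \cdots \geq x_{l_m} \geq 0$, and the new-component transition $x_i \leq x_{l_{j-1}} \leq x_{i-1}$ --- together with a routine cone/product assembly for $\pD_F(t)$. No gaps.
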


The proof is very similar to the proof of Lemma \ref{lemma1} and we omit it.

\begin{proof}[Proof of Proposition~\ref{prop2}]
 If $v$ is a node in the left-most tree with successors $v_1,\ldots,v_k$, then the inequalities for the corresponding $x_i,\ldots,x_{i+k-1}$ come from $\pD_{T_1}(t)$ and are
 $$1 \leq x_{i+k-1}/(1+t)^{j+k-1} \leq \ldots \leq x_i/(1+t)^j \leq 1+t.$$
 If we subtract $x_0 = 1$, we get precisely $0 \leq c(v_1,F;t) \leq \ldots \leq c(v_k,F;t) \leq t$. If $v$ is a node in a different tree and its successors are $v_1,\ldots,v_k$, then we take inequalities
 $$1 \leq x_{i+k-1}/(1+t)^{j+k-1} \leq \ldots \leq x_i/(1+t)^j \leq 1+t$$
 and ``cone'' them, i.e.\ replace $x_i,\ldots,x_{i+k-1}$ by $x_i/x_l,\ldots,x_{i+k-1}/x_l$. Multiplying by $x_l$ and subtracting $x_l$ yields
 $$0 \leq x_{i+k-1}/(1+t)^{j+k-1} - x_l \leq \ldots \leq x_i/(1+t)^j \leq tx_l,$$
 which is the same as
 $$0 \leq c(v_1,F;t) \leq \ldots \leq c(v_k,F;t) \leq c(w,F;t).$$
 If we ``cone'' the inequalities again, they do not change.  If $w$ is the root of a tree that is not the left-most component, the inequality for the corresponding component $x_l$ is $0 \leq x_l \leq x_{l'}$, where $l$ (respectively $l'$) is the position in NFS of $w$ (respectively, of the root of the tree to the left). Multiplying by $t$ gives $0 \leq c(w,F;t) \leq c(w',F;t)$.
\end{proof}

\begin{proof}[Proof of the second part of Theorem~\ref{thm9}]
For the second part, take $(x_1,\ldots,x_n) \in \pG_n(t)$, and let $k$, $1 \leq k \leq n$, be the largest integer for which $x_k \geq 1$. In particular, $(x_1,\ldots,x_k) \in \pC_k(t)$, and if $k < n$, $0 \leq x_{k+1} \leq 1$ and $(x_{k+2},\ldots,x_n) \in x_{k+1} \s G_{n-k-1}(t)$. By induction, Theorem~\ref{thm7} and the definition of the cone, this means that
$$ (x_1,\ldots,x_n) \in \pD_{T_1}(t) \times \cone(\pD_{T_2}(t) \times \cone(\pD_{T_3}(t) \times \cdots)),$$
i.e.\ $\set{\pD_F(t) \colon F \mbox{ plane forest on } n+1 \mbox{ nodes}}$ is a subdivision of the $t$-Gayley polytope.
\end{proof}

\begin{proof}[Proof of the second part of Theorem~\ref{thm8}]
%

The polytope $\pD_F(t)$ is determined by ordering the coordinates of the nodes with the same parent. Choosing an ordering of all nodes (i.e.\ changing the plane forest $F$ into a labeled forest $F'$) produces a simplex $\pS_{F'}(t)$. This finishes the proof of Theorem~\ref{thm8}.
\end{proof}

\medskip

\subsection{Subdivisions of the Tutte polytope} \label{proofs:tutte}

As mentioned in the introduction to this section, some of the proofs for the results for the Tutte polytope are only sketches. We essentially follow the proof of the results for the $t$-Gayley polytope.

\smallskip

\begin{proof}[Proof of the first part of Theorem \ref{thm10}] We are given a labeled forest $F$. Consider the transformation
$$
B_F \colon x_{i_k} \mapsto x_{i_k} + (1-q)(1-x_{l_k})\ts,
$$
with the inverse
$$
B_F^{-1} \colon x_{i_k} \mapsto x_{i_k} - \frac{(1-q)(1-x_{l_k})}{q}\..
$$
Here $i_k$ is the position in NFS of the node with label $k$, and $l_k$ is the position of the node with the maximal label in the same component. If $k$ is the label of a root, then $l_k = i_k$ and hence
$$B_F \colon x_{i_k} \mapsto qx_{i_k} + 1 - q\ts,$$
$$B_F^{-1} \colon x_{i_k} \mapsto (x_{i_k} - 1 + q)/q \ts,$$
This means that $B_F$ is the composition of a translation and a linear transformation with an upper triangular matrix in the standard basis. Moreover, the diagonal elements are $q$ (for coordinates corresponding to roots, so there are $k(F)-1$ of them) and $1$ (for other nodes). In other words, the simplex $\s S_F(t)$ with volume $t^{|E(F)|}(1+t)^{\alpha(F)}/n!$ is mapped into a simplex with volume $q^{k(F)-1}t^{|E(F)|}(1+t)^{\alpha(F)}/n!$. The coordinate $c(i,F;t) = tx_l$ is mapped into $t(x_l-1+q)/q$, and the coordinate $c(i,F;t) = x_i/(1+t)^j - x_l$ is mapped into
$$\frac{x_i - \frac{(1-q)(1-x_{l_k})}{q} }{(1+t)^j} - \frac{x_l-1+q}q\..
$$
Multiplying this by $q$, we obtain  $c(i,F;q,t)$. Therefore, $B_F(\s S_F(t)) = \s S_F(q,t)$, as desired.
\end{proof}

\begin{lemma}
  For a labeled $($respectively, plane$)$ forest $F$ on $n+1$ vertices, $\s S_F(q,t) \subseteq \s T_n(q,t)$ $($respectively, $\s D_F(q,t) \subseteq \s T_n(q,t) \ts)$. 
\end{lemma}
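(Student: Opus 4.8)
The plan is to verify directly that every point $x=(x_1,\dots,x_n)$ of $\s S_F(q,t)$ satisfies the inequalities $x_n\ge 1-q$ and $(\diamond)$ that define $\s T_n(q,t)$, and then to deduce the plane-forest case. The main device is the affine change of variables $B_F$ from the proof of the first part of Theorem~\ref{thm10}, which gives $\s S_F(q,t)=B_F(\s S_F(t))$, combined with the already-established inclusion $\s S_F(t)\subseteq\s G_n(t)$. Thus one may write $x=B_F(y)$ with $y\in\s S_F(t)$, and it suffices to show that the $\s G_n(t)$-inequalities for $y$, together with the root structure of $F$, force the $\s T_n(q,t)$-inequalities for $x$. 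I would not attempt the kind of NFS-transition case analysis used in Lemma~\ref{lemma1}, since the cane-path case seems to lose a factor of $qt$ when done directly; the $B_F$-route avoids this.

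First I would record the monotonicity properties of $y\in\s S_F(t)$. Reading the defining chain $0\le c(1,F;t)\le\cdots\le c(n+1,F;t)=t$: each component-root coordinate equals $t\,y_l$, so $0\le y_l\le 1$, the root coordinates are non-increasing in the NFS order of the components, $1=y_{l_1}\ge y_{l_2}\ge\cdots\ge y_{l_m}\ge 0$, and every non-root coordinate satisfies $y_m\ge(1+t)^{j}y_l\ge y_l$, where $l$ is its root position. Under $B_F$ a root coordinate becomes $x_l=q\,y_l+(1-q)\in[1-q,1]$ and a non-root coordinate becomes $x_m=y_m+(1-q)(1-y_l)\ge q\,y_l+(1-q)=x_l$. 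Consequently every $x_m\ge 1-q$ (in particular $x_n\ge 1-q$), every coordinate dominates the root coordinate of its component, and, since root coordinates decrease, $x_{j-1}\ge x_p$ for all $j-1\in\{0,1,\dots,i-1\}$, where $x_0=1$ and $p$ denotes the root position of the component containing the node at position $i-1$.

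For the family $(\diamond)$, fix $i$. By the last observation it suffices to prove $(\diamond)$ for the single index $j=p+1$ (all other instances are weaker, as $1-x_{j-1}\le 1-x_p$). Now split into the four cases according to whether each of the nodes at positions $i-1$ and $i$ is a component root — and note that if the node at position $i$ is not a root, then the node at position $i-1$ lies in its component. In each case, substituting $x=B_F(y)$, using $1-x_p=q(1-y_p)$ for the root coordinate $x_p$, and cancelling, the inequality $(\diamond)$ with $j=p+1$ reduces after a short linear manipulation to the single known inequality $y_i\le(1+t)y_{i-1}$ (which holds because $y\in\s G_n(t)$), together with one or both of the recorded facts $y_p\ge y_l$ and $y_{i-1}\ge y_l$, where $l$ is the root position of the component of the node at position $i-1$; the hypothesis $q\le 1$ enters exactly in the two cases where the node at position $i$ opens a new component. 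I expect the only real subtlety to be bookkeeping — keeping the root formula $x_l=q\,y_l+(1-q)$ and the non-root formula $x_m=y_m+(1-q)(1-y_l)$ straight in each of the four cases, and invoking the reduction to $j=p+1$, which is precisely the inequality $x_{j-1}\ge x_p$ proved above.

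Finally, for a plane forest $F$ the polytope $\s D_F(q,t)$ is, by Proposition~\ref{prop3} exactly as in the second part of Theorem~\ref{thm8}, the union of the simplices $\s S_{F'}(q,t)$ over the labelings $F'$ of $F$ with siblings increasing from left to right; hence $\s D_F(q,t)\subseteq\s T_n(q,t)$ follows immediately from the labeled case, completing the plan.
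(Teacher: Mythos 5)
Your argument is correct, but it is organized differently from the paper's. The paper proves the labeled case intrinsically: it works directly with the chain inequalities defining $\pS_F(q,t)$ in the $(q,t)$-coordinates $c(i,F;q,t)$, runs the same NFS-transition case analysis as in Lemma~\ref{lemma1} (same parent, child, cane-path transition, new component), writes out only the first case in detail and omits the rest, and treats the plane-forest case as essentially identical. You instead transport everything through the affine map $B_F$ (using $\pS_F(q,t)=B_F(\pS_F(t))$ from the proof of the first part of Theorem~\ref{thm10}) and reuse the already-recorded inclusion $\pS_F(t)\subseteq\pG_n(t)$; then the only new work is the reduction of $(\diamond)$ to the single index $j=p+1$ (justified by $x_{j-1}\ge x_p$, which is the same observation the paper makes when it passes from $x_l$ to $x_{j-1}$) and a four-case root/non-root computation in the $y$-variables, which I checked does collapse each time to $y_i\le(1+t)y_{i-1}$ plus the recorded monotonicity facts, with $q\le 1$ entering when position $i$ opens a new component. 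This buys a cleaner factorization: the NFS combinatorics is done once (in the $t$-Gayley lemma) and the $q$-deformation is handled by pure linear algebra, and your version actually supplies the cases the paper omits, including the new-component case which is the genuinely new feature of the forest/Tutte setting. The paper's route, by contrast, is self-contained in the $(q,t)$-coordinates and does not need the $B_F$-image description, at the cost of redoing the transition analysis. Two small remarks: your phrase ``$y_p\ge y_l$ \ldots where $l$ is the root position of the component of the node at position $i-1$'' is garbled, since with your definitions $l=p$; what is actually used in the new-component cases is the non-increase of root coordinates and $y_{i-1}\ge y_p$. Also, your final step for plane forests invokes the decomposition of $\pD_F(q,t)$ into the simplices $\pS_{F'}(q,t)$, which the paper only asserts later (second part of Theorem~\ref{thm10}); this is not circular, but the more economical route in your framework is $\pD_F(q,t)=B_F(\pD_F(t))$ from Proposition~\ref{prop3} together with $\pD_F(t)\subseteq\pG_n(t)$, since your recorded facts hold verbatim on $\pD_F(t)$.
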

\begin{proof}
 Since
 $$\frac{qx_i-(1-q)(1-x_l)}{(1+t)^j} - (x_l-1+q) \geq 0,$$
 for some $j \geq 0$ and $l$, we have $x_i \geq x_l$, where $t(x_l-1+q)$ is the coordinate of the root of the same component. But $t(x_l-1+q) \geq 0$, so $x_i \geq x_l \geq 1-q$.
 Assume that $v$ is the $(i-1)$-st visited node and $v'$ the $i$-th visited node, where $i \geq 2$. We have the following cases:
 \begin{itemize}
  \item $v'$ and $v$ have the same parent and $v'$ has a smaller label;
  \item $v'$ is the child of $v$ with the largest label;
  \item the unique path from $v$ to $v'$ goes up at least once, then down right, and then down to the child with the largest label;
  \item $v$ is the last node visited in NFS in its component, and $v'$ has the largest label among the remaining nodes.
 \end{itemize}
 Assume that $v'$ and $v$ have the same parent and $v'$ has a smaller label; in this case we have
 $$0 \leq \frac{qx_i-(1-q)(1-x_l)}{(1+t)^{j+1}} - (x_l-1+q) \leq \frac{qx_{i-1}-(1-q)(1-x_l)}{(1+t)^j} - (x_l-1+q) \leq t(x_l-1+q)$$
 for $l$ corresponding to the node with the maximal label in the same component as $v$ and $v'$. The middle inequality gives
 $$qx_i \leq q(1+t)x_{i-1} - t(1-q)(1-x_l).$$
 We already know that if $x_{j-1}$ belongs to the same component, then $x_{j-1} \geq x_l$, so
 $$qx_i \leq q(1+t)x_{i-1} - t(1-q)(1-x_{j-1}).$$
 If $x_{j-1}$ belongs to a component to the left, then $x_{j-1} \geq x_{l'} \geq x_l$, so we have $qx_i \leq q(1+t)x_{i-1} - t(1-q)(1-x_{j-1})$ in this case as well.  We omit the rest of the proof.
\end{proof}

\begin{proof}[Proof of Proposition \ref{prop3}] Note that $B_F$ is the same for all labeled forests with the same underlying plane forest. In light of the above proof, it is enough to prove that $B(\s D_F(t)) = \s D_F(q,t)$, i.e.\ that
$$B_F\left( \pD_{T_1}(t) \times \cone(\pD_{T_2}(t) \times \cone(\pD_{T_3}(t) \times \cdots))\right) = $$
$$ = \pD_{T_1}(t) \times \cone_q(\pD_{T_2}(t) \times \cone(\pD_{T_3}(t) \times \cdots)).$$
Suppose that $T_1$ has $k$ nodes. Then $B_F \colon x_i \mapsto x_i + (1-q)(1-x_0) = x_i$ for $1 \leq i \leq k-1$. The inequality $0 \leq x_l \leq x_{l'}$ is transformed to $1-q \leq x_l \leq x_{l'}$ via $B_F$, which is also the inequality we get when using $\cone_q$. The inequalities for $\s D_{T_p}(t)$, $p \geq 2$, have terms of the form $x_i/(1+t)^j$. If we ``cone'' these inequalities, we get terms of the form $x_i/((1+t)^jx_l$, and ``coning'' again does not change them. Applying $B_F$ gives
$$\frac{x_i - \frac{(1-q)(1-x_l)}q}{(1+t)^j \frac{x_l-1+q}q} = \frac{qx_i - (1-q)(1-x_l)}{(1+t)^j(x_l-1+q)},$$
which is also the effect of using $\cone_q$.
\end{proof}

\begin{proof}[Proof of Theorem \ref{thm12}] The first part follows from the previous two proofs. Now take $(x_1,\ldots,x_n) \in \s T_n(q,t)$, and let $k$, $1 \leq k \leq n$, be the largest integer for which $x_k \geq 1$. In particular, $(x_1,\ldots,x_k) \in \pC_k(t)$, and if $k < n$, $1-q \leq x_{k+1} \leq 1$, and we claim that and $q(x_{k+2},\ldots,x_n) - (1-q)(1-x_{k+1}) \in (x_{k+1} - 1 + q ) \s T_{n-k-1}(q,t)$. By induction, Theorem~\ref{thm7} and the definition of $\cone_q$ this will imply that
$$ (x_1,\ldots,x_n) \in \pD_{T_1}(t) \times \cone_q(\pD_{T_2}(t) \times \cone_q(\pD_{T_3}(t) \times \cdots)),$$
i.e.\ $\set{\pD_F(q,t) \colon F \mbox{ plane forest on } n+1 \mbox{ nodes}}$ is a subdivision of the $t$-Gayley polytope.
The inequality
$$\frac{qx_n - (1-q)(1-x_{k+1})}{x_{k+1}-1+q} \geq 1-q$$
is equivalent to
$$x_n \geq 1-q$$
for $q > 0$; the inequality
$$q \frac{qx_i - (1-q)(1-x_{k+1})}{x_{k+1}-1+q} \leq $$
$$\leq q(1+t) \frac{qx_{i-1} - (1-q)(1-x_{k+1})}{x_{k+1}-1+q} - t(1-q)\left( 1 - \frac{qx_{j-1} - (1-q)(1-x_{k+1})}{x_{k+1}-1+q}\right)$$
for $j > k + 2$ is equivalent to
$$qx_i \leq q(1+t)x_{i-1} - t(1-q)(1-x_{j-1}) + t(1-q) x_{k+1}$$
and follows from $qx_i \leq q(1+t)x_{i-1} - t(1-q)(1-x_{j-1})$; and the inequality
$$q \frac{qx_i - (1-q)(1-x_{k+1})}{x_{k+1}-1+q} \leq $$
$$\leq q(1+t) \frac{qx_{i-1} - (1-q)(1-x_{k+1})}{x_{k+1}-1+q} - t(1-q)\left( 1 - \frac{qx_{j-1} - (1-q)(1-x_{k+1})}{x_{k+1}-1+q}\right)$$
for $j = k + 2$ is equivalent to
$$q x_i \leq (1+t)q x_{i-1} - t(1-q)(1-x_{k+1}),$$
which is given. This completes the proof.
\end{proof}

\begin{proof}[Proof of the second part of Theorem \ref{thm10}]
 This follows in the same way as the second part of Theorem \ref{thm8} followed from the second part of Theorem \ref{thm9}.
\end{proof}

\medskip

\subsection{Vertices}
Here we prove the results from Section~\ref{vertices}.

\begin{proof}[Proof of Proposition~\ref{vertsx}]
Pick $t > 0$ and a labeled forest $F$ on $n+1$ nodes. Then $v_p(F;t)$, the $p$-th vertex of the simplex $\pS_F(t)$, $1 \leq p \leq n+1$, is the (unique) solution of the system of equations
$$
c(1,F;t) = \ldots = c(p-1,F;t) = 0, \quad c(p,F;t) = \ldots = c(n,F;t) = t\ts.
$$
It is easy to check that the solution agrees with the statement of the proposition.
\end{proof}

\begin{proof}[Proof of Proposition~\ref{vertcayley}]
For $(x_1,\ldots,x_n)$ to be a vertex of $\pC_n(t)$, one of the inequalities $1 \leq x_i$ and $x_i \leq 2x_{i-1}$ (where $x_0=1$) must be an equality for every $i$. That means that we have $x_1 \in \set{1,1+t}$, $x_i \in \set{1,(1+t)x_{i-1}}$ for $i = 2,\ldots,n$. This completes the proof. \end{proof}

\begin{proof}[Proof of Proposition~\ref{vertqsx}]
Recall the construction of $B_F$ from Subsection \ref{proofs:tutte}. For $p \leq r$, we have $x_l + (1-q)(1-x_l) = 1$, and for $p > r$, we have $x_l + (1-q)(1-x_l) = 1-q$. Similarly, $x_i + (1-q)(1-x_l) = x_i$ for $p \leq r$, and $x_i + (1-q)(1-x_l) = 1-q$ for $p < r$. This proves the proposition. \end{proof}

\begin{proof}[Proof of Theorem~\ref{verttutte}] \. For $q = 0$, this is just Proposition \ref{vertcayley}. Assume $q > 0$. By Proposition~\ref{vertqsx}, the Tutte polytope $\s T_n(q,t)$ is the convex hull of certain points $v = (x_1,\ldots,x_n)$ which have the following properties:
\begin{itemize}
 \item for every $i \geq 1$, $x_i$ is either $(1+t)^j$ for $j \geq 0$ or $1-q$;
 \item for every $i \geq 1$, if $x_{i-1} = (1+t)^j$ and $x_i = (1+t)^{j'}$, then $j' \leq j + 1$ (in particular, $x_1$ is either $1+t$, $1$ or $1-q$);
 \item if $x_i = 1-q$, then $x_{i+1} = \ldots = x_n = 1-q$.
\end{itemize}
We want to see that every such vertex is in the convex hull of $V_n(q,t)$. Suppose that $x_1,\ldots,x_k \neq 1-q$ and $x_{k+1} = \ldots = x_n = 1-q$. Then $(x_1,\ldots,x_k) \in \s C_k(t)$, and therefore it is a convex combination of points in $V_k(t)$. Therefore $(x_1,\ldots,x_n)$ is in the convex hull of the set $V'_n(q,t)$ that we get if we replace \emph{some} (i.e.\ not necessarily all) of the trailing $1$'s of the points in $V_n(t)$ by $1-q$. Take a point $(x_1,\ldots,x_n)$ that has $x_k = 1, x_{k+1} = \ldots = x_n = 1-q$. Then it is on the line between $(x_1,\ldots,x_{k-1},(1+t)x_{k-1},1-q,\ldots,1-q)$ and $(x_1,\ldots,x_{k-1},1-q,1-q,\ldots,1-q)$. This implies that $(x_1,\ldots,x_n)$ is in the convex hull of $V_n(q,t)$. 

\smallskip

It remains to prove that no point in $V_n(q,t)$ can be expressed as a convex combination of the others. For $S \subseteq \set{1,\ldots,n}$, define $x^S$ to be the element of $V_n(q,t)$ that satisfies $x_i = (1+t)x_{i-1} \iff i \in S$. For example, for $n = 4$, $x^\emp = (1-q,1-q,1-q,1-q)$, $x^{\set{1,3}} = (1+t,1,1+t,1-q)$ and $x^{\set{2,3,4}} = (1,1+t,(1+t)^2,(1+t)^3)$.  We need the following lemma.

\begin{lemma} \label{lemma2}
 If $S \neq T$, there exists a defining inequality $H(x) \leq 0$ of $\s T_n(q,t)$ so that $H(x^S) = 0$ and $H(x^T) < 0$.
\end{lemma}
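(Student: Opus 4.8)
The plan is to produce, for each ordered pair $S\ne T$, an explicit inequality of the defining system of $\s T_n(q,t)$ that is tight at $x^S$ and strict at $x^T$. Since every such inequality also holds on all of $\s T_n(q,t)$, this immediately finishes the theorem: were $x^S=\sum_{T\ne S}\lambda_Tx^T$ a convex combination with some $\lambda_{T_0}>0$, the separating inequality $H$ for the pair $(S,T_0)$ would give $0=H(x^S)=\sum_T\lambda_TH(x^T)\le\lambda_{T_0}H(x^{T_0})<0$, a contradiction.

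First I would fix notation. Write the defining inequalities as $H_0(x):=1-q-x_n\le 0$ and, for $1\le j\le i\le n$, $H_{i,j}(x):=qx_i-q(1+t)x_{i-1}+t(1-q)(1-x_{j-1})\le 0$, with $x_0:=1$; in particular $H_{i,1}(x)=q\bigl(x_i-(1+t)x_{i-1}\bigr)$, so every point of $\s T_n(q,t)$ satisfies $x_i\le(1+t)x_{i-1}$. For $q>0$ the coordinates of $x^S$ are $x^S_i=(1+t)^{e_i}\ge 1$ for $i\le m_S:=\max S$ (with $x^S_0=1$, $e_i$ the length of the run of $S$ ending at $i$, and $i\in S\iff x^S_i=(1+t)x^S_{i-1}$), while $x^S_i=1-q$ for $i>m_S$; in particular $S\mapsto x^S$ is injective. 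Let $d$ be the least index with $x^S_d\ne x^T_d$ and put $y_i:=x^S_i=x^T_i$ for $i<d$. If $y_{d-1}=1-q$ then $d-1>m_S$ and $d-1>m_T$, whence $x^S_d=x^T_d=1-q$, a contradiction; so $y_{d-1}\ge 1$, and then $x^S_d$ equals exactly one of the three distinct numbers $(1+t)y_{d-1}$ (iff $d\in S$), $1$ (iff $d\notin S$, $d\le m_S$), or $1-q$ (iff $d>m_S$), and likewise for $x^T_d$.

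Then I would split on the value of $x^S_d$, naming in each case the inequality $H$ to use (checking $1\le j\le i\le n$ is routine). \emph{If $d\in S$:} take $H=H_{d,1}$; it vanishes at $x^S$ since $x^S_d=(1+t)x^S_{d-1}$, and $H(x^T)=q(x^T_d-(1+t)y_{d-1})<0$ because $d\notin T$ forces $x^T_d\le 1<(1+t)y_{d-1}$. \emph{If $x^S_d=1$} (so $d\notin S$, $d\le m_S$, hence $m_S>d$): the two possibilities for $x^T_d\ne 1$ are $d\in T$ and $d>m_T$; take $H=H_{m_S,d+1}$ in the first, $H=H_{m_S,1}$ in the second. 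In both, $H$ vanishes at $x^S$ because $m_S\in S$ kills the part $q(x_{m_S}-(1+t)x_{m_S-1})$ and, in the first subcase, $x^S_d=1$ kills the term $t(1-q)(1-x_d)$. Strictness at $x^T$: in the first subcase $x^T_d=(1+t)y_{d-1}>1$ makes $t(1-q)(1-x^T_d)<0$ while the other part is $\le 0$; in the second, $m_S\ge d>m_T$ gives $m_S\notin T$, so $x^T_{m_S}=1-q<(1+t)x^T_{m_S-1}$ and $H_{m_S,1}(x^T)<0$. \emph{If $x^S_d=1-q$} (so $d>m_S$, forcing $d=m_S+1$ and $x^S_d=\cdots=x^S_n=1-q$): the possibilities for $x^T_d\ne 1-q$ are $d\in T$, or $d\notin T$ with $d\le m_T$ (so $x^T_d=1$). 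Here $x^S$ and $x^T$ may agree on a long terminal run of $(1-q)$'s, so one must reach past it: take $H=H_0$ if $m_T=n$ (resp. $n=d$), and $H=H_{n,m_T+1}$ (resp. $H=H_{n,d+1}$) otherwise. Tightness at $x^S$ holds because the coordinates $x^S_n,x^S_{n-1}$ and $x^S_{m_T}$ (resp. $x^S_d$) all equal $1-q$, so the chosen $H$ evaluates to $-tq(1-q)+tq(1-q)=0$ (resp. $H_0(x^S)=0$); strictness at $x^T$ holds because $x^T$'s terminal $(1-q)$-run is strictly shorter, so either $x^T_n=(1+t)x^T_{n-1}>1-q$ and $H_0(x^T)<0$, or the term $t(1-q)(1-x^T_{m_T})$ (resp. $t(1-q)(1-x^T_d)$) is $\le -t^2(1-q)<0$ while the remaining part is $\le 0$.

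The only genuinely delicate point is the case $x^S_d=1-q$: when $S$ and $T$ share a long string of trailing $(1-q)$'s, neither $H_0$ nor an $H_{d,1}$-type inequality separates $x^S$ from $x^T$, and the separating inequality must be chosen using $\max T$ versus $n$ (or $d$ versus $n$); its verification rests on the exact cancellation in $H_{n,m_T+1}(x^S)$ and on the crude bound $x^T_{m_T}\ge 1+t$ (resp. $x^T_d\ge 1+t$). Everything else follows mechanically from the description of the vertices $x^S$ and the single monotonicity $x^U_i\le(1+t)x^U_{i-1}$ valid throughout $\s T_n(q,t)$.
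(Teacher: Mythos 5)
Your proof is correct, and it rests on the same underlying strategy as the paper's: for each pair $S\neq T$, exhibit an explicit defining inequality of $\s T_n(q,t)$ that is tight at $x^S$ and strict at $x^T$. The organization is genuinely different, though. The paper splits on $S\not\subseteq T$ versus $S\subset T$ (taking $i\in S\setminus T$, $j=1$ in the first case, and $j=\min(T\setminus S)+1$ in the second, falling back on $x_n\geq 1-q$ or on $i=j=\max T+1$ when $\max S<\max T$), whereas you split on the value of $x^S_d$ at the first coordinate $d$ where the two vertices differ, and in the delicate ``shared trailing $(1-q)$-run'' situation you reach to the last coordinate via $H_{n,\,m_T+1}$ (or $H_0$ when $m_T=n$) instead of the paper's $H_{\max T+1,\,\max T+1}$; both choices exploit the same cancellation $q(1+t)(1-q)-tq(1-q)=q(1-q)$. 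Your version makes every tight/strict check local to the indices $d$, $m_S$, $m_T$, $n$, at the cost of a slightly finer case analysis; the paper's containment-based split is a bit more compact. One caution on the final case: the two ``resp.''\ alternatives must be paired with the subcases $d\notin T$, $d\leq m_T$ and $d\in T$, respectively --- the $d$-based choice $H_{n,\,d+1}$ is valid only when $d\in T$, since for $x^T_d=1$ and $n\in T$ it evaluates to $0$ at $x^T$ (e.g.\ $S=\{1\}$, $T=\{1,3\}$, $n=3$). Your stated strictness bound $x^T_d\geq 1+t$ shows this is what you intend, and in fact the $m_T$-based choice (with $H_0$ when $m_T=n$) works uniformly in both subcases, so it would be cleaner to state only that one.
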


If $x^S = \sum_{R \neq S} \alpha_R x^R$ for $\alpha_R \geq 0$, $\sum_{R \neq S} \alpha_R = 1$, take $T$ with $\alpha_T > 0$ and $H$ from the lemma. Then $0 = H(x^S) = \sum_{R \neq S} \alpha_R H(x^R) \leq \alpha_T H(x^T) < 0$. The contradiction proves that the vertices of $\s T_n(q,t)$ are exactly the points in $V_n(q,t)$, and completes the proof of the theorem. \end{proof}

\begin{proof}[Proof of Lemma~\ref{lemma2}] Assume first that $S \not\subseteq T$. For $i \in S \setminus T$ and $j = 1$ we have
$$qx^S_i = q (1+t) x^S_{i-1} = q(1+t)x^S_{i-1} - t(1-q)(1-x^S_{j-1})$$
and
$$qx^T_i < q (1+t) x^T_{i-1} = q(1+t)x^T_{i-1} - t(1-q)(1-x^T_{j-1}).$$
We can now assume that $S \subset T$. Let $j = \min(T \setminus S) + 1$. If there is $i \geq j$ in $S$ (and also $i \in T$), then $x^S_{j-1} = 1$ and $x^T_{j-1} > 1$. Therefore
$$qx^S_i = q (1+t) x^S_{i-1} = q(1+t) x^S_{i-1} - t(1-q)(1-x_{j-1})$$
and
$$qx^T_i = q (1+t) x^T_{i-1} < q(1+t) x^T_{i-1} - t(1-q)(1-x_{j-1}).$$
Otherwise, $\max S < \max T$. If $\max T = n$, then $x^S_n = 1-q$ and $x^T_n \geq 1 > 1-q$. If $\max T \leq n-1$, take $i = j = \max T + 1 \leq n$. Then $x^S_i = x^S_{i-1} = x^S_{j-1} = 1-q$ and
$$qx^S_i = q(1-q) = q(1+t)x^S_{i-1} - t(1-q)(1-x^S_{j-1}),$$
while $x^T_i = 1-q$, $x^T_{i-1} = x^T_{j-1} \geq 1 > 1-q$ and
$$qx^T_i = q(1-q) < q(1+t)x^T_{i-1} - t(1-q)(1-x^T_{j-1}) = (q+t) x^T_{i-1} - t(1-q).$$
This finishes the proof of the lemma.
\end{proof}

\medskip

\section{Final remarks and open problems}\label{s:fin}

\subsection{} \label{s:fin-1} \.
By now, there are quite a few papers on ``combinatorial volumes'',
i.e.~expressing combinatorial sequences as volumes of certain
polytopes.  These include \emph{Euler numbers} as volumes of
hypersimplices~\cite{S1} (see also~\cite{ABD,ERS,LP,Pos}),
\emph{Catalan numbers}~\cite{GGP},
\emph{Cayley numbers} as volumes of permutohedra (see~\cite{Pak,Ziegler}),
the \emph{number of linear extensions of posets}~\cite{S2}, etc.

Let us mention a mysterious connection of our results to those in~\cite{SP},
where the number of (generalized) \emph{parking functions} appears as the volume
of a certain polytope, which is also combinatorially equivalent to an $n$-cube.
The authors observe that in a certain special case, their polytopes have 
(scaled) volume the \emph{inversion polynomial}~$\IP_n(t)$, compared to
$t^n \IP_n(1+t)$ for the $t$-Cayley polytopes.  The connection between
these two families of polytopes is yet to be understood, and the
authors intend to pursue this in the future.

In this connection, it is worth noting that Theorem~\ref{t:main} and
our triangulation construction
seem to be fundamentally about labeled trees rather than parking functions,
since the full Tutte polynomial $\rT_{K_n}(q,t)$ seems to have no
known combinatorial interpretation in the context of parking
functions (cf.~\cite{Stanley,Hag}).
Curiously, the specialization $\rT_G(1,t)$ has a natural combinatorial
interpretation for $G$-parking functions for general graphs~\cite{CL}.


\subsection{} \label{s:fin-2} \. It is worth noting that all
simplices in the triangulation of the Cayley polytopes are
Schl\"{a}fli orthoschemes, which play an important role in
combinatorial geometry.  For example, in McMullen's
\emph{polytope algebra} (which formalizes properties of
\emph{scissor congruence}),
orthoschemes form a linear basis~\cite{McM} (see also~\cite{Dup,Pak}).
Moreover, Hadwiger's conjecture states that \emph{every} convex
polytope in $\rr^d$ can be triangulated into a finite number of
orthoschemes~\cite{Had} (see also~\cite{BKKS}).

Let us emphasize here that not all simplices of triangulations
constructed in Sections~\ref{gayley},~\ref{t} and~\ref{s:tutte} are
orthoschemes.  Let us also mention that triangulations of
polytopes $\pD_T$ and $\pD_F$ given by $\pS_T$ and $\pS_F$ are
the usual \emph{staircase triangulations} of the products of
simplices (see e.g.~\cite[$\S 6.2$]{DRS}).

\subsection{} \label{s:fin-3} \.
In a follow-up note~\cite{KP}, we prove Cayley's theorem (Theorem~\ref{t:cayley})
by an explicit volume-preserving map, mapping integer points in~$\pC_n$ into
a simplex corresponding to integer partitions as in Theorem~\ref{t:cayley}, a
rare result similar in spirit to~\cite{PV}.  As an application of
our Theorem~\ref{t:pol}, we conclude that the volume of the convex hull
of these partitions is also equal to~$C_{n+1}/n!$.  While perhaps not surprising
to the experts in the field~\cite{Bar}, the integer points in these polytopes
have a completely different structure than polytopes themselves.

\subsection{} \label{s:fin-4} \. The following table lists the $f$-vectors
of Tutte polytopes $\pT_n(q,t)$ for $n = 1,\ldots, 10$, $0 \leq q < 1$ and $t > 0$.
The results were obtained using {\tt polymake} (see~\cite{GJ1}).

\smallskip

{\footnotesize
$$\aligned
& 2 \\
& 4,\ 4 \\
& 8, \ 13,\ 7 \\
& 16, \ 37,\ 32, \ 11 \\
& 32, \ 97,\ 117, \ 66, \ 16 \\
& 64, \ 241, \ 375, \ 297, \ 121, \ 22 \\
& 128, \ 577, \ 1103, \ 1130, \ 653, \ 204, \ 29 \\
& 256, \ 1345, \ 3055, \ 3850, \ 2894, \ 1296, \ 323, \ 37 \\
& 512, \ 3073, \ 8095, \ 12130, \ 11255, \ 6597, \ 2381, \ 487, \ 46 \\
& 1024, \ 6913, \ 20735, \ 36050, \ 39865, \ 28960, \ 13766, \ 4117, \ 706, \ 56
\endaligned
$$
}

\noindent
Based on these calculations, we state the following conjecture.
\begin{conj}
For $0 < q < 1$ and $t > 0$, the number of edges of the Tutte polytope $\pT_n(q,t)$ is $3(n-1)2^{n-2}+1$,
and the number of $2$-faces is $2^{n-5} \bigl(9\ts n^2- 29\ts n +38\bigr) -1$.
\end{conj}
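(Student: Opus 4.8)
The plan is to pin down the face lattice of $\pT_n(q,t)$ from an explicit vertex--facet incidence table and then read off $f_1$ and $f_2$ by combinatorial bookkeeping; since the incidences will turn out not to depend on the particular values $q\in(0,1)$ and $t>0$, the $f$-vector is automatically constant on this range. By Theorem~\ref{verttutte} the $2^n$ vertices of $\pT_n(q,t)$ are the points $x^S$ indexed by subsets $S\subseteq\{1,\dots,n\}$, where $x^S_i=(1+t)^{e_i}$ with $e_i$ the length of the longest run of consecutive integers ending at $i$ and contained in $S$ when $i\le\max S$, and $x^S_i=1-q$ when $i>\max S$ (with $\max\varnothing=0$). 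Since $\pT_n(q,t)$ is cut out by $x_n\ge1-q$ together with the $\binom{n+1}2$ inequalities $(\diamond)$, every facet is one of these $\binom{n+1}2+1$ hyperplanes, and the first step is to check that each of them is facet-defining and irredundant --- equivalently, that each carries $n$ affinely independent vertices $x^S$, a direct computation with the coordinates above --- which already gives $f_{n-1}=\binom{n+1}2+1$, matching the table.

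The core of the preparation is the incidence computation. Write $H_0$ for the facet $x_n=1-q$, $A_i$ for the inequality $(\diamond)$ with $j=1$, and $B_{j,i}$ for $(\diamond)$ with $2\le j\le i$. Substituting $x^S$ into each inequality, with the same case split on whether $i,i-1,j-1$ lie $\le\max S$ or $>\max S$ that occurs in the proof that $\pS_F(q,t)\subseteq\pT_n(q,t)$, yields
\[
H_0\ni x^S\iff n\notin S,\qquad A_i\ni x^S\iff i\in S,
\]
\[
B_{j,i}\ni x^S\iff\bigl(i\in S\ \text{and}\ j-1\notin S\bigr)\ \text{or}\ j>\max S+1 .
\]
In particular $x^{\{1,\dots,n\}}$ lies only on $A_1,\dots,A_n$ and is simple, whereas $x^{\varnothing}$ lies on $H_0$ and on every $B_{j,i}$, i.e.\ on $1+\binom n2$ facets, so $\pT_n(q,t)$ is very far from simple. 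As a sanity check, for $n=2,3$ this table reproduces the $f$-vectors $(4,4)$ and $(8,13,7)$.

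With the incidence table fixed, the rest is set-theoretic combinatorics. A pair $\{x^S,x^{S'}\}$ bounds an edge iff no third vertex $x^R$ lies on every facet that contains both $x^S$ and $x^{S'}$; using the formulas above I would sort the adjacent pairs into a few explicit families --- the single-coordinate ``cube'' flips, together with the extra pairs appearing near $x^{\varnothing}$ (for instance $x^{\varnothing}$ is adjacent to $x^{\{1\}}$ and to $x^{R}$ for every nonempty $R\subseteq\{2,\dots,n\}$) --- count each by elementary binomial sums, and verify the total equals $3(n-1)2^{n-2}+1$. Two structural facts help here and for the $2$-faces: a proper face of $\pT_n(q,t)$ avoids $x^{\varnothing}$ exactly when it is a face of $A_1\cup\dots\cup A_n$ (these being the only facets missing $x^{\varnothing}$), and the vertex figure at $x^{\varnothing}$ should be combinatorially $\pT_{n-1}(q,t)$ by the $\cone_q$ construction of Section~\ref{s:tutte}; together they split $f_k$ into a recursively counted part and a part supported on the union of the $A_i$, giving a recursion against which the conjectured formulas can be checked. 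The $2$-faces are handled the same way --- a vertex set spans a $2$-face iff the intersection of the facets through all its members is $2$-dimensional --- and a longer case analysis, organizing the $2$-faces by number of vertices and by incidence with $x^{\varnothing}$, should produce $2^{n-5}(9n^2-29n+38)-1$; Euler's relation together with the already known $f_0$, $f_1$, $f_{n-1}$ provides an independent check.

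The main obstacle will be the $2$-face count. Apart from the few simple vertices $x^S$ (those with $S=\{1,\dots,k\}$ or $\{1,\dots,k\}\cup\{n\}$) the polytope is non-simple, very much so at $x^{\varnothing}$, so its $2$-faces are not indexed by pairs of coordinate directions as they are for a cube; they occur as triangles, quadrilaterals and pentagons, each family contributing a term $p(n)\ts 2^{n}$ for a small polynomial $p$, and the work of the proof is organizing the case analysis so that these terms collapse to the stated quadratic-times-$2^{n-5}$ expression and confirming that the incidence table is complete (equivalently, that every inequality in $(\diamond)$ is genuinely needed and that no spurious incidence is introduced). The edge computation is a much shorter but faithful rehearsal of the same steps, and is where I would first test the method.
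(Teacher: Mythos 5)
You should first be aware that the paper itself contains no proof of this statement: it is stated as a conjecture, supported only by {\tt polymake} computations of the $f$-vectors for $n\le 10$ (Subsection~\ref{s:fin-4}). So there is nothing in the paper to compare your argument to, and your proposal has to stand on its own as a complete proof — which, as written, it does not. Your groundwork is sound: the description of the vertices $x^S$ agrees with Theorem~\ref{verttutte} and the proof of Lemma~\ref{lemma2}, and your incidence criteria ($H_0\ni x^S\iff n\notin S$, $A_i\ni x^S\iff i\in S$, and $B_{j,i}\ni x^S$ iff $i\in S,\ j-1\notin S$ or $j>\max S+1$) do follow from a correct case analysis of $(\diamond)$ at these points, independently of $q\in(0,1)$, $t>0$. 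The facet count $\binom{n+1}2+1$ is consistent with the data, though facet-definingness is only asserted.

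The genuine gap is everything after that. The decisive steps — which vertex pairs are edges, the claim that the vertex figure at $x^{\varnothing}$ is combinatorially $\pT_{n-1}(q,t)$, and the whole $2$-face enumeration — are deferred with ``should produce'' and ``verify the total equals'', which is precisely the content of the conjecture. Worse, the edge classification you sketch is already provably incomplete: writing the target as $3(n-1)2^{n-2}+1=n\ts 2^{n-1}+(n-3)2^{n-2}+1$, the single-coordinate flips account for at most $n\ts 2^{n-1}$ edges and your extra pairs at $x^{\varnothing}$ (namely $x^R$ with $R\subseteq\{2,\dots,n\}$, $|R|\ge 2$) number $2^{n-1}-n$, so for $n=4$ the two families give at most $32+4=36$ edges while the conjectured (and computed) value is $37$. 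Extra non-flip adjacencies occur not only at $x^{\varnothing}$ but at the other highly non-simple vertices as well — e.g.\ $x^{\{1\}}$, which lies on $2+\binom{n-1}2$ facets and (inside the facet $A_1$) plays the same role that $x^{\varnothing}$ plays in $\pT_n$ — so any correct count must handle a recursive family of such vertices, exactly the recursion you gesture at but do not set up or prove. Until the vertex-figure/recursive structure is actually established and the edge and $2$-face censuses are carried out in closed form, the statement remains what the paper says it is: a conjecture.
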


\subsection{} \label{s:fin-5} \.
The recurrence relations for inversion polynomials $\Inv_n(t)$ have a long
history, and are used to obtain closed form exponential generating functions 
for~$\Inv_n(t)$.  We refer to~\cite{MR,Ges1,Ges2,GS,Tutte} for several such
results.  The recursive formulas in Theorem~\ref{rec2} are
different, but somewhat similar to those in~\cite{Gil}.

Let us mention that one should not expect to find similar recurrence
relations for general connected graphs, as the problem of computing
(or even approximating) Tutte polynomial $\rT_H(q,t)$ is hard for almost
all values of~$q$ and~$t$~\cite{GJ}.  We refer to~\cite{Welsh} for the 
background and further references.

\subsection{} \label{s:fin-6} \.
The neighbors-first search used in our construction was previously studied
in~\cite{GS} in the context of the Tutte polynomial of a complete graph.
Still, we find its appearance here
somewhat bemusing as other graph traversal algorithms, such
as depth-first search (DFS) and breadth-first search (BFS), are both more standard
in algorithmic literature~\cite{Knuth}. In fact, we learned that it was used
in~\cite{GS} only after much of this work has been finished.

It is interesting to see what happens under graph traversal algorithms
as well.  In the pioneering paper~\cite{GW}, Gessel and Wang showed that the
identity \ts $t^{n-1}\Inv_n(1+t)=F_n(t)$ \ts can be viewed as the result of the
DFS algorithm mapping connected graphs into search trees.  We do not know
what happens for BFS, but surprisingly the algorithm exploring edges of
the graph lexicographically, from smallest to largest, also makes sense.
It was shown by Crapo (in a different language, and for general matroids)
to give internal and external activities~\cite{Crapo}.  In conclusion, let
us mention that BFS, DFS and NFS are special cases of a larger class of
searches known to define combinatorial bijections in a related
setting~\cite{CP}.

\vskip.7cm

\noindent
{\bf Acknowledgements.} \  We are very grateful to Matthias Beck and
Ben Braun for telling us about~\cite{BBL} and the Braun Conjecture,
and to Federico Ardila, Raman Sanyal and Prasad Tetali for helpful
conversations.  The first author was partially supported by
Research Program P1-0297 of the Slovenian Research Agency.
The second author was partially supported by the BSF and NSF grants.


\newpage

{\footnotesize

}

\end{document}